
\documentclass[a4paper]{article}
\usepackage{amsmath,amsfonts,amssymb,amsthm,pstricks,caption}


\newtheorem{lemma}{Lemma}[section]
\newtheorem{corollary}[lemma]{Corollary}
\newtheorem{theorem}[lemma]{Theorem}
\newtheorem{proposition}[lemma]{Proposition}

\theoremstyle{definition}
\newtheorem{definition}[lemma]{Definition}
\newtheorem{remark}[lemma]{Remark}
\newtheorem{example}[lemma]{Example}


\newcommand{\Sphere}{\mathbb{S}^3}  
\DeclareMathOperator*{\nhd}{\mathcal{N}} 
\DeclareMathOperator{\inc}{i}
\DeclareMathOperator*{\ms}{MS}
\DeclareMathOperator{\is}{IS}
\DeclareMathOperator{\V}{V}
\DeclareMathOperator{\lk}{lk}

\DeclareMathOperator{\Int}{int}


\begin{document}
\title{The Kakimizu complex of a split link}
\author{Jessica E. Banks}
\date{}
\maketitle
\begin{abstract}
We study the Kakimizu complex of a split link. As part of this, we also study Seifert surfaces and the Kakimizu complex for a non-split link in a 3--ball.
In addition, we show that a simplex of the Kakimizu complex of a non-split link can be realised in an essentially unique way.
\end{abstract}


\section{Introduction}

The Kakimizu complex of a link records the structure of the set of taut Seifert surfaces for the link.
To date, research on Seifert surfaces and the Kakimizu complex has focused on non-split links. 
Perhaps the main reason for this is that the complement of a split link is reducible, and many 3--manifold techniques are best suited to working with irreducible manifolds.

Our intention here is to study the Kakimizu complex for split links. In order to do so, we must first understand Seifert surfaces in a link complement within a 3--ball, or equivalently in the complement of a point in $\Sphere$, or in $\mathbb{R}^3$.
While some of the results below are unsurprising, our aim is to give complete proofs.

\bigskip

Let $L\subset\Sphere$ be an oriented link, and set $M=\Sphere\setminus\nhd(L)$.
Let $\pi\colon\widetilde{M}\to M$ be the infinite cyclic cover of $M$, corresponding to the kernel of the homomorphism $\lk\colon\pi_1(M)\to\mathbb{Z}$ given by linking number with $L$. Let $\tau\colon\widetilde{M}\to\widetilde{M}$ be the generating covering transformation in the positive direction (that is, $\tau$ corresponds to an element of $\pi_1(M)$ that has linking number 1 with $L$).

\begin{definition}
A \textit{Seifert surface} for $L$ is a compact, oriented surface $R\subset\Sphere$, with no closed components, such that $\partial R=L$ as an oriented link. We study Seifert surfaces up to isotopy of $\Sphere$ keeping $L$ fixed.
The surface $R$ can also be seen as properly embedded in $M$, considered up to ambient isotopy in $M$. 

We say $R$ is \textit{taut} if it has maximal Euler characteristic among all Seifert surfaces for $L$.
\end{definition}

\begin{definition}[See \cite{MR1177053} p225  and \cite{MR2869183} p1490]\label{msldefn}
Define the \textit{Kakimizu complex} $\ms(L)$ of $L$ to be the following flag simplicial complex. Its vertices are ambient isotopy classes of taut Seifert surfaces for $L$. Two distinct vertices span an edge if they have representatives $R$, $R'$ such that a lift of $M \setminus R'$ to $\widetilde{M}$ intersects exactly two lifts of $M \setminus R$.
\end{definition}

Note that two distinct taut Seifert surfaces that are adjacent can be made disjoint.
The converse is true if all taut Seifert surfaces for $L$ are connected, but does not necessarily hold if there are disconnected taut Seifert surfaces. 
This is why the definition uses lifts of $M\setminus R$ rather than lifts of $R$.
For succinctness, we will take `a lift of a Seifert surface' to mean a connected lift. 
That is, we will say $\widetilde{R}$ is a lift of a taut Seifert surface $R$ if there is a lift $V$ of $M\setminus R$ such that $\widetilde{R}$ lies between $V$ and $\tau(V)$.

\begin{definition}
Say taut Seifert surfaces $R$ and $R'$ with $[R]$ and $[R']$ adjacent are \textit{tight} if they realise the adjacency.
\end{definition}

\begin{remark}
When Kakimizu first defined $\ms(L)$ in \cite{MR1177053}, he also defined a second complex $\is(L)$ that is constructed using incompressible Seifert surfaces for $L$ rather than just taut ones. All the results in this paper also hold in this setting.
\end{remark}

We will make use of the following definitions and results to understand isotopies of Seifert surfaces.

\begin{definition}[\cite{MR2869183} Section 3] \label{almostdefn}
Let $M'$ be a connected 3--manifold, and let $S,S'$ be (possibly disconnected) surfaces properly embedded in $M'$.
Call $S$ and $S'$ \textit{almost transverse} if, given a component $S_0$ of $S$ and a component $S'_0$ of $S'$, they either coincide or intersect transversely. Call the surfaces \textit{almost disjoint} if, given a component $S_0$ of $S$ and a component $S'_0$ of $S$, they either coincide or are disjoint. Say they are \textit{$\partial$--almost disjoint} if $\partial S = \partial S'$ and, given a component $S_0$ of $S$ and a component $S'_0$ of $S'$, they either coincide or have disjoint interiors.
Say $S$ and $S'$ bound a \textit{product region} if the following holds. There is a compact surface $S_T$, a finite collection $\rho_T\subseteq\partial R$ of arcs and simple closed curves and a map of $T = (S_T \times I)\mathclose{}/\mathopen{}\sim$ into $M$ that is an embedding on the interior of $T$ and has the following properties.
\begin{itemize}
\item $S_T \times \{0\} = S \cap T$ and $S_T \times \{1\} = S' \cap T$.
\item $\partial T\setminus(S_T\times \partial I)\subseteq \partial M$.
\end{itemize}
Here $\sim$ collapses $\{x\}\times I$ to a point for each $x \in \rho_T$. 
The \textit{horizontal boundary} of $T$ is $(S_T \times\partial I)/\mathopen{}\sim\mathclose{}$. 
Say $S$ and $S'$ have \textit{simplified intersection} if they do not bound a product region.
\end{definition}

\begin{proposition}[\cite{MR0224099} Corollary 3.2]\label{surfaceinproductprop}
Suppose surfaces $S_0,S_1$ bound a product region $T=(S_T\times I)/\mathopen{}\sim\mathclose{}$. Let $S'$ be an incompressible surface that is transverse to $S_0,S_1$. Suppose $S'\cap \Int(T)\neq\emptyset$ but $S'\cap (S_1\cap T)=\emptyset$. Then each component of $S'\cap\Int(T)$ bounds a product region in $T$ with a subsurface of $S_0$. In particular, if additionally $S'\cap (S_0\cap T)=\emptyset$ then this component of $S'$ is parallel to those of $S_0,S_1$ that bound $T$.
\end{proposition}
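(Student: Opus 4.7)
The plan is to exploit the $I$--bundle structure on $T$ together with the incompressibility of $S'$. Away from the collapsed locus $\rho_T\times I/\mathopen{}\sim\mathclose{}$, which is contained in $\partial M$ and therefore disjoint from $\Int(T)$, the region $T$ is a genuine product $S_T\times I$, so there is a vertical projection $p\colon\Int(T)\to\Int(S_T)$ available throughout the argument.

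First I would make $S'$ transverse to $S_0$ and simplify the $1$--manifold $S'\cap(S_0\cap T)$ by standard innermost disk and outermost arc moves. An innermost circle of intersection bounds a disk on $S_0$; by incompressibility of $S'$, it also bounds a disk on $S'$; the two disks cobound a ball in $T$ by irreducibility of the product $S_T\times I$, and this ball gives an ambient isotopy reducing the intersection. Outermost arcs are removed similarly, using that $\partial T\setminus(S_T\times\partial I)\subseteq\partial M$ to provide the analogous boundary-compressing disks. After this simplification, every component of $S'\cap(S_0\cap T)$ is essential on both surfaces.

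Next, fix a component $C$ of $S'\cap\Int(T)$. Since $C$ is disjoint from $S_1\cap T$ by hypothesis, its frontier in $T$ lies entirely on $S_0\cap T$ together with $\partial M$. I would then use the projection $p$ to argue that $C$ is a section over its image $C_0=p(C)\subseteq S_T$: otherwise some vertical arc $\{x\}\times I$ meets $C$ in two or more points, and choosing an adjacent pair cuts out an innermost subarc of $\{x\}\times I$ whose endpoints lie on $C$, which combines with a subdisk of $C$ to produce either a compressing disk for $S'$ or a bigon with $S_0$ violating the essentiality already arranged. It follows that $C$ and $C_0\times\{0\}\subseteq S_0$ cobound a sub-product-region of $T$. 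For the final clause, if additionally $S'\cap(S_0\cap T)=\emptyset$, then $\partial C\subseteq\partial M$, so $C_0$ is a union of entire components of $S_T$ and $C$ is parallel to the corresponding components of $S_0$ and $S_1$.

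The main obstacle I anticipate is the step showing that $p|_C$ is injective, i.e.\ that after reduction no vertical arc meets $C$ twice. This is the heart of Waldhausen's original argument and requires carefully packaging the essentiality of the intersection curves into a bigon- or disk-type contradiction with incompressibility of $S'$. The collapsings along $\rho_T$ add some bookkeeping but do not affect the core argument, since they lie in $\partial M$ and the vertical projection is defined on all of $\Int(T)$.
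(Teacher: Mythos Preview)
The paper does not prove this proposition at all: it is quoted as Corollary~3.2 of Waldhausen's 1968 paper and used as a black box. So there is nothing to compare your argument against on the paper's side; the question is whether your sketch stands on its own.

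It does not, and the gap is exactly where you suspected. In your second step you want to conclude that if a vertical fibre $\{x\}\times I$ meets $C$ in two adjacent points, then the short vertical arc between them together with an arc on $C$ bounds a disk, yielding a compression of $S'$ or a bigon with $S_0$. But $T$ has the homotopy type of $S_T$, which is typically not simply connected, so that loop need not bound any disk in $T$. There is no compressing disk to extract. Waldhausen's actual argument for the underlying Proposition~3.1 does not proceed this way: it inducts on a hierarchy for the base surface $S_T$, cutting $T$ along essential vertical disks and annuli, making $C$ disjoint from them by incompressibility and $\partial$-incompressibility, and reducing to the case where $S_T$ is a disk, where the result is elementary. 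Your projection argument would need something like this hierarchy induction underneath it to go through.

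There is also a smaller issue with your first step. The proposition, as stated and as used later in the paper (for instance in Corollary~\ref{isotopyrelbdycor2} and Proposition~\ref{alltightprop}), concerns $S'$ in its given position, not up to isotopy; the conclusion is that each component of $S'\cap\Int(T)$ \emph{as it sits} bounds a product region with a subsurface of $S_0$. Your innermost-disk and outermost-arc moves alter $S'$ and hence the components in question. In the applications the relevant case is usually the ``in particular'' clause, where $S'$ is already disjoint from both $S_0$ and $S_1$ and this step is vacuous, but for the general statement you cannot simplify first without further justification.
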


\begin{theorem}[\cite{MR1315011} Proposition 4.8; see also \cite{MR0224099} Proposition 5.4 and Corollary 3.2]\label{productregionthm}
Let $M'$ be an irreducible, $\partial$--irreducible Haken manifold. Let $S, S'$ be incompressible, $\partial$--incompressible
surfaces properly embedded in $M'$. 
\begin{itemize}
 \item If $S$ and $S'$ are isotopic then there is a product region between them in $M'$.
 \item Suppose $S \cap S' \neq\emptyset$, but $S$ can be isotoped to be disjoint from $S'$. Then there is a
product region between $S$ and $S'$ in $M'$.
\end{itemize}
\end{theorem}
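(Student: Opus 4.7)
The plan is to prove both parts by the standard Waldhausen technique: realise the given movement of $S$ by an ambient isotopy $F\colon S\times I\to M'$, make $F$ transverse to $S'$, and analyse the preimage surface $\Sigma = F^{-1}(S')\subseteq S\times I$.

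As a preliminary I would establish a standard simplification lemma. If $S$ and $S'$ are transverse and some component of $S\cap S'$ bounds a disk in one surface whose interior misses the other, or is cut off by a $\partial$-compressing half-disk, then incompressibility (respectively $\partial$-incompressibility) of the other surface yields a parallel compressing disk, and irreducibility (respectively $\partial$-irreducibility) of $M'$ supplies a ball across which to isotope, reducing $|S\cap S'|$. So I may assume throughout that $|S\cap S'|$ is minimal among transverse representatives of the isotopy classes of $S$ and $S'$.

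For the second bullet, take $F$ with $F_0=\mathrm{id}_S$ and $F_1(S)\cap S'=\emptyset$. Then $\Sigma$ meets $S\times\{0\}$ in $S\cap S'$, is disjoint from $S\times\{1\}$, and meets $\partial S\times I$ in arcs with endpoints on $S\times\{0\}$. Starting from a component of $\Sigma$ that is closest to $S\times\{1\}$, I would extract a subregion $T_0\subseteq S\times I$ bounded by a component $C\subseteq\Sigma$, a subsurface $S_T\subseteq S\times\{0\}$, and arcs in $\partial S\times I$; by construction $T_0$ is a product of the form required in Definition \ref{almostdefn}, with $\sim$ collapsing the arcs where $C$ meets $\partial S\times I$. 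Then $F(T_0)$ is the desired product region between $S$ and $S'$ in $M'$. For the first bullet, choose instead $F$ with $F_1(S)=S'$; the component of $\Sigma$ containing $S\times\{1\}$ now cobounds an analogous region with a subsurface of $S\times\{0\}$, and pushing forward by $F$ again provides the product region. (Proposition \ref{surfaceinproductprop} applied inside $S\times I$ is convenient for verifying that the extracted region really is a product.)

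The main obstacle is verifying that $F\vert_{T_0}$ is actually an embedding on the interior of $T_0$. A self-identification of $F$ inside $T_0$ would give either a further sheet of $\Sigma$ meeting $\mathrm{int}(T_0)$, contradicting the innermost choice, or else (after an innermost/outermost analysis of the singular set) a compressing or $\partial$-compressing disk for $S$ or $S'$ missed by the simplification lemma, contradicting minimality of $|S\cap S'|$. Carrying out this case analysis carefully, using $\partial$-incompressibility and $\partial$-irreducibility of $M'$ to rule out the half-disk cases, is the technical heart of the proof and is what makes both cited references delicate.
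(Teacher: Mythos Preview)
The paper does not prove this theorem at all: it is quoted as a black box from the literature (Waldhausen \cite{MR0224099} and \cite{MR1315011}), with no proof given. So there is nothing in the paper to compare your argument against.

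As for the sketch itself: the isotopy-track idea is natural, but the place you flag as ``the main obstacle'' really is the whole proof, and your proposed resolution does not work as stated. A self-intersection of $F$ inside $T_0$ means $F(x_1,t_1)=F(x_2,t_2)$ for two interior points; this has nothing a priori to do with $S'$, so it need not produce ``a further sheet of $\Sigma=F^{-1}(S')$'', and it does not obviously hand you a compressing or $\partial$-compressing disk for $S$ or $S'$ either. Likewise, to invoke Proposition~\ref{surfaceinproductprop} inside $S\times I$ you need $\Sigma$ to be incompressible there; you have not argued this, and it does not follow formally from incompressibility of $S'$ in $M'$ because $F$ is highly non-injective. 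Waldhausen's actual proof of Proposition~5.4 does not proceed by analysing the singular set of an isotopy track; it goes via an inductive hierarchy argument, cutting $M'$ along incompressible surfaces and reducing to the case of an $I$-bundle, which is where Corollary~3.2 enters. If you want a direct route, the cleaner modern alternative is to pass to the cover of $M'$ corresponding to $\pi_1(S)$, where $S$ lifts to an embedded surface with a product neighbourhood, and work there; this sidesteps the embeddedness issue entirely.
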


If $S$ is an incompressible Seifert surface then it is $\partial$--incompressible.
If $M'$ is the complement of a non-split link other than the unknot, or the infinite cyclic cover of such a manifold, then $M'$ is irreducible, $\partial$--irreducible and Haken. However, for much of this paper we will be working with manifolds that do not meet these hypotheses.

\begin{corollary}[\cite{2011arXiv1109.0965B} Corollary 4.5]\label{isotopyrelbdycor1}
Suppose $L$ is not split and not fibred. Let $R,R'$ be taut Seifert surfaces for $L$. If $R,R'$ do not coincide but are isotopic by an isotopy fixing their boundaries then there is a product region $T=(S_T\times I)/\mathopen{}\sim\mathclose{}$ between them with $\rho_T=\partial T$. 
\end{corollary}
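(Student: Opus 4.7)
The approach is to apply Theorem~\ref{productregionthm} to produce some product region between $R$ and $R'$, and then refine it so that its vertical boundary is entirely collapsed onto $\partial R=\partial R'$.

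Since $L$ is non-split and non-fibred (and so in particular not the unknot), $M$ is irreducible, $\partial$-irreducible and Haken. The taut Seifert surfaces $R, R'$ are incompressible and $\partial$-incompressible, and so satisfy the hypotheses of Theorem~\ref{productregionthm}. If $R$ and $R'$ are disjoint the first bullet of the theorem yields a product region, while if they intersect then the hypothesised isotopy shows that $R$ can be moved off $R'$, so the second bullet applies. Either way, one obtains a product region $T = (S_T\times I)/\mathopen{}\sim\mathclose{}$ whose horizontal faces lie on $R$ and $R'$.

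By Definition~\ref{almostdefn}, the vertical boundary of $T$ lies in $\partial M$, decomposing as the collapsed set $\rho_T\subseteq\partial R$ together with annuli and discs coming from the image of $(\partial S_T\setminus\rho_T)\times I$. Because the isotopy between $R$ and $R'$ is rel boundary, $\partial R=\partial R'$ holds as an equality of subsets of $\partial M$, not merely up to isotopy. Thus each uncollapsed vertical annulus $A$ has both boundary circles on coincident components of $\partial R$, and therefore lies in a single annular component of $\partial M\setminus\partial R$; being properly embedded there, $A$ is parallel to a core curve of $\partial R$. The final step is to absorb each such $A$ into $\rho_T$: using this parallelism, isotope $T$ in a collar of $\partial M$ so that the component of $\partial S_T$ mapping onto $A$ is folded down onto a curve of $\partial R$ and thereby joins $\rho_T$. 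Iterating over all components of $\partial S_T\setminus\rho_T$ yields a product region with $\rho_T=\partial T$, as required.

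\textbf{Main obstacle.} The delicate point is the absorption step: one must check that the successive collapsing isotopies preserve the product structure and keep the interior of $T$ embedded, so that at each stage what remains is still a product region in the sense of Definition~\ref{almostdefn}. The non-fibred hypothesis is used here to rule out the degenerate situation in which $T$ would expand to cover essentially all of $M$, where the absorption argument breaks down.
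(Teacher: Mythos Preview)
The paper does not give its own proof of this corollary; it is stated with a citation to \cite{2011arXiv1109.0965B} Corollary~4.5 and no argument is supplied here. So there is nothing in the present paper to compare your proof against.

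On the merits of your argument: the overall strategy---apply Theorem~\ref{productregionthm} to get \emph{some} product region, then repair the vertical boundary---is natural, but the absorption step has a genuine gap that is more serious than your ``Main obstacle'' paragraph suggests. Consider a torus component $T_K$ of $\partial M$ on which $\partial R=\partial R'$ meets $T_K$ in a single curve $\gamma$. Then $T_K\setminus\gamma$ is a \emph{single} open annulus. If the product region $T$ has any uncollapsed vertical boundary on $T_K$ at all, that vertical annulus $A$ has both boundary circles equal to $\gamma$, so $A$ is the \emph{entire} annulus $T_K\setminus\gamma$, not a thin collar of $\gamma$. Your claimed isotopy ``in a collar of $\partial M$'' folding $A$ down onto $\gamma$ does not exist: collapsing $A$ would require sweeping once around the meridional direction of $T_K$, a global move that drags the horizontal face of $T$ with it and does not preserve embeddedness of $\Int(T)$.

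This is exactly where the non-fibred hypothesis enters, but not as a safeguard for your absorption procedure. The argument one actually needs is closer to the following: when $R$ and $R'$ are $\partial$--almost disjoint, they cobound two regions in $M$; near each component of $\partial R$ one region is the ``thin wedge'' (already satisfying $\rho_T=\partial S_T$ there) and the other is the ``thick wedge'' (with vertical annulus the full $T_K\setminus\gamma$). One shows the thin-wedge region is a product directly, using that if the thick-wedge region were also a product then $M$ would fibre over $S^1$. Your proposal does not supply this step, and the absorption manoeuvre as written cannot be completed.
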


\begin{corollary}\label{isotopyrelbdycor2}
Suppose $L$ is not split and not fibred. Let $R,R'$ be taut Seifert surfaces for $L$. If $(R\cap R')\setminus\partial R\neq\emptyset$ but $R'$ can be isotoped keeping its boundary fixed so that $R$ and $R'$ are $\partial$--almost disjoint then there is a product region $T=(S_T\times I)/\mathopen{}\sim\mathclose{}$ between them with $\rho_T=\partial T$. 
\end{corollary}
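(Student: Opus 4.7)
My plan is to reduce Corollary \ref{isotopyrelbdycor2} to Corollary \ref{isotopyrelbdycor1} and Theorem \ref{productregionthm}, applied component-wise after performing the hypothesised isotopy.

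Let $R''$ denote the image of $R'$ under the boundary-preserving isotopy that makes it $\partial$-almost disjoint from $R$. Being obtained by ambient isotopy, $R''$ is again a taut Seifert surface for $L$, with $\partial R'' = \partial R'$. By $\partial$-almost disjointness, each component of $R$ either coincides with a component of $R''$ or has disjoint interior from every component of $R''$. The hypothesis $(R\cap R')\setminus\partial R\neq\emptyset$ guarantees the existence of components $R_0\subseteq R$ and $R_0'\subseteq R'$ with non-empty interior intersection; let $R_0''$ be the image of $R_0'$ under the isotopy, which is a component of $R''$.

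In the first case, $R_0'' = R_0$. Then $R_0$ and $R_0'$ are distinct (since their intersection has interior points), boundary-preservingly isotopic, and incompressible and $\partial$-incompressible in $M$ (as components of taut, hence incompressible, Seifert surfaces). Since $L$ is non-split and non-fibred, $L$ is not the unknot, so $M$ is irreducible, $\partial$-irreducible and Haken. The argument underlying Corollary \ref{isotopyrelbdycor1} --- which combines Theorem \ref{productregionthm}(first bullet) with the boundary-fixing isotopy --- then applies to the pair $(R_0,R_0')$ in $M$ and yields a product region between them with $\rho_T=\partial T$. In the second case, $R_0''\neq R_0$, so by $\partial$-almost disjointness $R_0''$ has disjoint interior from $R_0$. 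Then $R_0$ and $R_0'$ are incompressible, $\partial$-incompressible surfaces in $M$ with non-empty interior intersection that can be removed by a boundary-preserving isotopy. Theorem \ref{productregionthm}(second bullet) yields a product region between them in $M$, and the boundary-fixing nature of the isotopy then promotes this to a product region with $\rho_T=\partial T$. In either case, the product region between $R_0$ and $R_0'$ is, a fortiori, the required product region between $R$ and $R'$.

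The main obstacle will be the promotion step to $\rho_T=\partial T$ in the second case: Theorem \ref{productregionthm} only delivers an abstract product region, so one must rerun the argument underlying Corollary \ref{isotopyrelbdycor1} in the disjointness setting, keeping track of the boundary-preserving isotopy throughout the product-region construction to force the vertical boundary to collapse. A subsidiary concern is that Corollary \ref{isotopyrelbdycor1} is formally stated for Seifert surfaces of $L$, while I wish to apply it to components; however, its proof depends only on incompressibility, $\partial$-incompressibility, and the Haken structure of $M$, all of which persist at the component level.
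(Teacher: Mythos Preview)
Your approach tries to produce the product region directly between components of $R$ and $R'$, but the step you flag as the ``main obstacle'' is a genuine gap, not a technicality. In Case~2, Theorem~\ref{productregionthm} requires that $R_0'$ can be isotoped to be \emph{disjoint} from $R_0$; what you actually have is a boundary-fixing isotopy to $R_0''$, which has disjoint \emph{interior} from $R_0$ but still shares boundary curves with it. To invoke the theorem you would have to move the boundary, at which point the boundary-fixing property needed for the promotion to $\rho_T=\partial T$ is lost. More fundamentally, the proposal to ``rerun the argument underlying Corollary~\ref{isotopyrelbdycor1}'' is not available to you: that corollary is a citation whose proof is not reproduced here, and its statement concerns \emph{isotopic} taut Seifert surfaces for $L$ --- not individual components, and not surfaces that can merely be made $\partial$-almost disjoint. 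You have supplied no mechanism to force $\rho_T=\partial T$ in Case~2, and your Case~1 likewise rests on an unverified extension of the cited result to components.

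The paper sidesteps all of this by changing which pair of surfaces Corollary~\ref{isotopyrelbdycor1} is applied to. Rather than comparing $R$ with $R'$, it takes the isotoped copy $R''$ (further isotoped, while remaining $\partial$-almost disjoint from $R$, to minimise $|R'\cap R''|$) and applies Corollary~\ref{isotopyrelbdycor1} to the pair $(R',R'')$: these are honest taut Seifert surfaces for $L$, isotopic rel boundary and not coincident, so the corollary applies verbatim and yields a product region $T'$ with $\rho_{T'}=\partial T'$. Minimality of $|R'\cap R''|$ forces $R$ to meet $T'$; since $R$ is $\partial$-almost disjoint from $R''$, Proposition~\ref{surfaceinproductprop} shows each component of $R\cap T'$ bounds a sub-product-region with $R'$ inside $T'$, and an inclusion-minimal such region is the required $T$, inheriting $\rho_T=\partial T$ from $T'$. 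The idea you are missing is to route the argument through $R''$ rather than attacking $R$ versus $R'$ directly.
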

\begin{proof}
Let $R''$ be a copy of $R'$, and isotope it keeping its boundary fixed so that $R$ and $R''$ are $\partial$--almost disjoint. Keeping it $\partial$--almost disjoint from $R$, isotope $R''$ to minimise $|R'\cap R''|$.
By Corollary \ref{isotopyrelbdycor1}, there is a product region $T'$ between $R'$ and $R''$ with $\rho_{T'}=\partial T'$.
If $R$ is disjoint from $T'$ then $T'$ gives an isotopy of $R''$ keeping its boundary fixed and disjoint from $R$ that reduces $|R'\cap R''|$. No such isotopy exists. Thus $R'$ meets $T'$.
Since $R$ is $\partial$--almost disjoint from $R''$, Proposition \ref{surfaceinproductprop} tells us that each component of $R\cap T'$ bounds a product region in $T'$ with $R'$.
These product regions are partially ordered by inclusion. Let $T$ be one such product region that is minimal  in this order. Then $T$ is as required.
\end{proof}

\bigskip

In Section \ref{complementofpointsection} we study the Kakimizu complex of a non-split link in $\mathbb{R}^3$. We will see that the vertices can be identified in terms of the Kakimizu complex of the link and the fundamental group of the link complement. We then describe when two vertices are adjacent. One result we obtain is as follows (see Definition \ref{mspldefn}).

\begin{theorem}
Let $L$ be a link such that every taut Seifert surface for $L$ is connected. Then $\dim(\ms_p(L))=\dim(\ms(L))+1$.
\end{theorem}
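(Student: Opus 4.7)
The plan is to exploit the forgetful map $\pi\colon\ms_p(L)\to\ms(L)$ provided by the identification of vertices described in Section \ref{complementofpointsection}: each vertex of $\ms_p(L)$ corresponds to a pair $([R],g)$, where $[R]\in\ms(L)$ and $g$ lies in the relevant quotient of $\pi_1(M)$, and $\pi$ simply forgets $g$. The theorem will then follow from bounding $\dim\ms_p(L)$ above and below by $\dim\ms(L)+1$.

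First I would check that $\pi$ extends to a simplicial map. If two vertices of $\ms_p(L)$ span an edge, realised by tight surfaces $R,R'\subset\mathbb{R}^3$ whose lifts to the infinite cyclic cover of $\mathbb{R}^3\setminus\nhd(L)$ interleave as in Definition \ref{msldefn}, then viewing the same surfaces inside $\Sphere$ and lifting to $\widetilde{M}$ exhibits their images in $\ms(L)$ as either equal or adjacent. Hence $\pi(\sigma)$ is a simplex of $\ms(L)$ for every simplex $\sigma\subseteq\ms_p(L)$, giving $\dim\pi(\sigma)\le\dim\ms(L)$.

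The main obstacle is to bound the size of the fibres of $\pi$ within a single simplex. The vertices of $\sigma$ lying over a fixed vertex of $\pi(\sigma)$ are represented by shifted copies of a single taut Seifert surface. Using the connectedness hypothesis together with Corollary \ref{isotopyrelbdycor2} and Proposition \ref{surfaceinproductprop}, I would argue that any two tight representatives in such a fibre must differ by one generating deck transformation, so each fibre of $\pi|_{\sigma}$ contains at most two vertices. Moreover, at most one such fibre can actually contain two, since otherwise two independent shifts would force incompatible product regions between the chosen representatives in $\mathbb{R}^3\setminus\nhd(L)$. Combined, this yields $|\sigma|\le\dim\ms(L)+2$, so $\dim\sigma\le\dim\ms(L)+1$. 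Connectedness is essential here: disconnected taut Seifert surfaces would allow independent shifts on independent components, destroying the bound.

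For the matching lower bound, I would start with a top-dimensional simplex of $\ms(L)$ realised by mutually disjoint connected taut Seifert surfaces $R_1,\dots,R_{d+1}$ with $d=\dim\ms(L)$, and then adjoin an additional vertex lying over $[R_1]$, obtained by applying the generating deck transformation to a lift of $R_1$ and projecting back into $\mathbb{R}^3$. A direct check that the resulting surface is adjacent in $\ms_p(L)$ to each of the originals, using the interleaving of lifts in the cover, produces a simplex of dimension $d+1$, completing the argument.
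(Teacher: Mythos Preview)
Your overall architecture matches the paper's: use the forgetful simplicial map $\inc_*\colon\ms_p(L)\to\ms(L)$, bound the fibre sizes within a simplex, and build an explicit $(d+1)$--simplex over a top-dimensional simplex of $\ms(L)$. The claim that each fibre of $\inc_*$ restricted to a simplex has at most two vertices is correct, and your lower-bound construction is essentially the paper's (though you should shift the \emph{top} surface in the $<_{R_p}$--ordering rather than the bottom one; the direction of the meridian loop matters for checking Corollary~\ref{adjacencycor2}(3) against the remaining vertices).

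The genuine gap is your justification that at most one fibre can be doubled. Invoking ``incompatible product regions'' via Corollary~\ref{isotopyrelbdycor2} and Proposition~\ref{surfaceinproductprop} does not lead anywhere here: there is no obvious geometric obstruction of that kind, and the paper does not argue this way. Instead, the paper extracts from Corollary~\ref{adjacencycor2} a numerical invariant, the linking number $\lk(\rho)\in\mathbb{Z}$ of the loop labelling each vertex, and establishes two facts: (i) adjacent vertices of $\ms_p(L)$ have $|\lk(\rho)-\lk(\rho')|\le 1$, and (ii) if $\inc_*(R)<_{R_p}\inc_*(R')$ then $\lk(\rho)\le\lk(\rho')$. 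Ordering the vertices of an $n$--simplex compatibly with $\leq$ on $\ms(L)$ and with $\lk$, the values $\lk(\rho_0),\ldots,\lk(\rho_n)$ form a non-decreasing sequence taking at most two values, and a repeated $\inc_*$--image forces a jump of exactly one in $\lk$, which can therefore occur only at the single transition point. Without this monotonicity of $\lk$ relative to the ordering on $\ms(L)$, your fibre-counting argument does not close; you should replace the product-region heuristic with this linking-number analysis.
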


In Section \ref{splitlinksection} we consider how to understand Seifert surfaces for a split link in terms of those for the components that make up the link. 
Describing the behaviour in this situation explicitly and in full generality does not appear straightforward. However, for the distant union of two non-split links we have the following more precise result.

\begin{theorem}
Let $L_1$ and $L_2$ be non-split links. Then $|\ms(L_1\sqcup L_2)|\cong|\ms_p(L_1)|\times|\ms_p(L_2)|$.
\end{theorem}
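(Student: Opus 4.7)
First I would fix a splitting sphere $\Sigma$ for $L_1 \sqcup L_2$, writing $\Sphere = B_1 \cup_\Sigma B_2$ with $L_i \subset \Int(B_i)$, and use it to establish a bijection on vertices. For any taut Seifert surface $R$ for $L_1 \sqcup L_2$, incompressibility together with an innermost-disk reduction on $R \cap \Sigma$ produces a representative disjoint from $\Sigma$, so $R = R_1 \sqcup R_2$ with $R_i \subset B_i$; additivity of Euler characteristic then forces each $R_i$ to be taut for $L_i$ in $B_i$. Two such splittings of isotopic taut surfaces are in turn isotopic in each $B_i$ by applying Corollary~\ref{isotopyrelbdycor2} inside $B_i$ (whose hypotheses hold since $L_i$ is non-split), and every pair of taut surfaces in the two balls evidently arises by disjoint union. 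This gives $V(\ms(L_1 \sqcup L_2)) \cong V(\ms_p(L_1)) \times V(\ms_p(L_2))$.

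Next I would analyse the infinite cyclic cover. Because $\pi_1(\Sigma)$ is trivial, $\Sigma$ lifts to a $\mathbb{Z}$-indexed family $\{\Sigma_n\}$ in $\widetilde{M}$ permuted freely by $\tau$, and the preimage of $M_i = B_i \setminus \nhd(L_i)$ in $\widetilde{M}$ is connected and coincides with the infinite cyclic cover $\widetilde{M_i}$ of $M_i$, with the $\Sigma_n$ as spherical boundary components. Fix $\Sigma_0$, and for a split representative $R = R_1 \sqcup R_2$ let $V_i^n \subset \widetilde{M_i}$ be the lift of $M_i \setminus R_i$ containing $\Sigma_n$; then the lifts of $M \setminus R$ are precisely $W_n = V_1^n \cup V_2^n$. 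For a second split representative $R' = R_1' \sqcup R_2'$ let $U_i^n$ and $X_n$ be defined analogously, and let $d_i \in \{-1, 0, +1\}$ encode the position of $U_i^0$ within the $V_i^n$-tiling (so $d_i = 0$ iff $R_i = R_i'$, and otherwise $U_i^0$ meets precisely $V_i^0$ and $V_i^{d_i}$). A short computation of the four cross-intersections, using that $U_i^0$ contains only $\Sigma_0$ among the lifts of $\Sigma$, gives $X_0 \cap W_n \neq \emptyset$ iff $n \in \{0, d_1, d_2\}$. Hence $[R]$ and $[R']$ are adjacent in $\ms(L_1 \sqcup L_2)$ iff $|\{0, d_1, d_2\}| = 2$, which holds iff either exactly one of $d_1, d_2$ is nonzero, or $d_1 = d_2 \neq 0$.

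Finally I would assemble these edges into the product. The signs $d_i$ endow each edge of $\ms_p(L_i)$ with a canonical $\tau$-induced orientation, and a $k$-simplex of $\ms(L_1 \sqcup L_2)$ corresponds to a family of $k+1$ pairs whose coordinate projections span simplices $\sigma_1, \sigma_2$ of the $\ms_p(L_i)$ and which forms a chain in the product of the orderings on $\sigma_1, \sigma_2$. These chains are exactly the simplices of the order-complex (``staircase'') triangulation of the prism $|\sigma_1| \times |\sigma_2|$, and assembling across all pairs of simplices gives the homeomorphism $|\ms(L_1 \sqcup L_2)| \cong |\ms_p(L_1)| \times |\ms_p(L_2)|$. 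I expect the main obstacle to be the coherence of these orientations: verifying that the $\tau$-induced directions on edges assemble into a consistent total order on each simplex of $\ms_p(L_i)$, so that the local prismatic triangulations fit together globally. This amounts to choosing simultaneous disjoint representatives of all vertices of any given simplex with respect to a common system of lifts in $\widetilde M$, essentially Kakimizu's observation that each simplex of his complex inherits a natural total order from the infinite cyclic cover.
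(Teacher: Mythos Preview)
Your approach is essentially the paper's own: establish the vertex bijection by splitting along a sphere, compute adjacency in the infinite cyclic cover, and assemble the simplices via the prismatic/staircase triangulation of a product of ordered complexes. The paper packages the last step using the Eilenberg--Steenrod framework of ordered simplicial complexes (defining $\leq_i$ on $\V(\ms_p(L_i))$ by which side of the sphere lies above which surface), while you do the equivalent covering-space computation directly with your parameters $d_i$; these are the same argument in different language, and your identification of the coherence of the $d_i$-orientations as the remaining obstacle matches exactly what the paper proves in its ``$(\ms_p(L_i),\leq_i)$ is ordered'' lemma.

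Two small technical corrections. First, your invocation of Corollary~\ref{isotopyrelbdycor2} for the well-definedness of the vertex bijection is not the right tool: that corollary produces product regions between surfaces with common boundary in $\Sphere\setminus\nhd(L)$, whereas what you need is that two Seifert surfaces for $L_i$ disjoint from $\Sigma$ and isotopic in $M$ are already isotopic in the complement of $\Sigma$. The paper proves this directly by working in $\Sphere\setminus\nhd(L_i)$ and expanding a small ball near $L_{3-i}$ until its boundary is $\Sigma$, carrying the isotopy along. Second, Corollary~\ref{isotopyrelbdycor2} (and several other tools) assume the link is not fibred; the fibred case should be noted separately, though it is trivial since then $\ms_p(L_i)$ is a single point and the product statement reduces to $\ms(L)\cong\ms_p(L_{3-i})$.
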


In the appendix we prove some results we need in Section \ref{complementofpointsection} about positioning surfaces representing the vertices of a simplex in $\ms(L)$. These give the following.

\begin{theorem}\label{uniquepositionthm}
Suppose $L$ is not split. Let $[R_0],\ldots,[R_n]$ be the distinct vertices of a simplex of $\ms(L)$. 
Then $R_0,\ldots,R_n$ can be isotoped to be pairwise disjoint and tight.
Furthermore, this position is unique up to ambient isotopy of $R_0\cup\cdots\cup R_n$ in $M$.
\end{theorem}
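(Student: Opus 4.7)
The plan is to prove both parts by induction on $n$, with the product-region analysis of Proposition \ref{surfaceinproductprop}, together with Corollaries \ref{isotopyrelbdycor1} and \ref{isotopyrelbdycor2}, as the main engines. For $n=0$ there is nothing to show, and $n=1$ reduces for existence to the definition of an edge and for uniqueness to applying Corollary \ref{isotopyrelbdycor1} to the pair $R_1,R'_1$ after arranging $R_0=R'_0$. Throughout we may assume that $L$ is not fibred, since for a non-split fibred link $\ms(L)$ is a single vertex and the conclusion is vacuous.

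For existence, suppose inductively that $R_0,\ldots,R_{n-1}$ are pairwise disjoint and tight. Pick a representative of $[R_n]$ disjoint from $R_0$ (supplied by adjacency), and among all such choices minimise $|R_n\cap(R_1\cup\cdots\cup R_{n-1})|$. If some component of $R_n\cap R_i$ remains, then the adjacency of $[R_n]$ and $[R_i]$ combined with Corollary \ref{isotopyrelbdycor2} produces a product region $T$ between $R_n$ and an isotopic copy disjoint from $R_i$, with $\rho_T=\partial T$. Applying Proposition \ref{surfaceinproductprop} to each remaining $R_j$, whose components are incompressible and which is disjoint from one horizontal side of $T$, forces each component of $R_j\cap\Int(T)$ to be a parallel sheet of the product structure; pushing $R_n$ across the innermost such product subregion then strictly reduces $|R_n\cap R_i|$ without affecting disjointness from the remaining surfaces, contradicting minimality.

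For uniqueness, suppose $(R_i)$ and $(R'_i)$ are two pairwise disjoint tight arrangements of the same vertex set. By the inductive hypothesis we may ambient-isotope so that $R_i=R'_i$ for $i<n$, leaving $R_n$ and $R'_n$ as isotopic representatives of a common vertex, both disjoint from $R_0\cup\cdots\cup R_{n-1}$. Keeping $R'_n$ disjoint from that union, isotope it to be $\partial$-almost disjoint from $R_n$. Corollary \ref{isotopyrelbdycor1} then yields a product region $T$ between $R_n$ and $R'_n$ with $\rho_{T}=\partial T$, and a second application of Proposition \ref{surfaceinproductprop} shows that every component of $R_i\cap\Int(T)$, for $i<n$, is a parallel sheet. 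An innermost-region argument thus gives an ambient isotopy supported near $T$ that fixes $R_0,\ldots,R_{n-1}$ setwise and carries $R'_n$ to $R_n$, completing the induction.

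The main obstacle in both halves is controlling how the already-positioned surfaces $R_0,\ldots,R_{n-1}$ sit inside the product regions produced by adjacency or isotopy: one must verify that the innermost product subregion pushed across contains no further sheets whose removal would disrupt global disjointness. Proposition \ref{surfaceinproductprop} supplies exactly this control, but only when the product region satisfies $\rho_T=\partial T$, which is why the sharper conclusions of Corollaries \ref{isotopyrelbdycor1} and \ref{isotopyrelbdycor2}, rather than the generic Theorem \ref{productregionthm}, are essential for the induction to go through.
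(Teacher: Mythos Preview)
The central gap is that your existence argument establishes only disjointness, not tightness. Making $R_n$ disjoint from each $R_i$ does not by itself show that the pair realises the adjacency: if a component of $R_n$ is parallel to a component of $R_i$, the product region between them can lie on either side, and the wrong side spoils tightness without creating any intersection to remove. The paper's Proposition \ref{alltightprop} devotes most of its length to exactly this point, using the arc--intersection criterion of Lemma \ref{arcintersectionlemma} and a path argument through three of the surfaces to show that, once $R_{m+1}$ is disjoint from $R_0,\ldots,R_{k+1}$ with the earlier pairs already tight, any parallel component of $R_{m+1}$ automatically sits on the correct side of its $R_{k+1}$ counterpart. Your outline has no analogue of this step.

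There is also a technical mismatch in invoking Corollaries \ref{isotopyrelbdycor1} and \ref{isotopyrelbdycor2}: both require the two surfaces to share their boundary and the isotopy to fix it, which you have not arranged. The paper works instead with Theorem \ref{productregionthm} directly, and the condition $\rho_T=\partial T$ is in fact irrelevant---Proposition \ref{surfaceinproductprop} does not need it, so your closing paragraph misidentifies the obstacle. For uniqueness the paper takes a different route: rather than a product region between $R_n$ and $R'_n$ (where parallel sheets of some $R_i$ inside $T$ would block your ``isotopy fixing $R_0,\ldots,R_{n-1}$ setwise''), it uses the ordering $<_{R_p}$ (Proposition \ref{surfacesinorderprop}) to pin down the cyclic order of the boundaries, and then Lemma \ref{isotopeincomplementlemma}, whose tightness hypothesis is precisely what rules out the parallel-sheet ambiguity.
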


The material in the appendix is not used in Section \ref{splitlinksection}, and in Section \ref{complementofpointsection} it is only used in the proof of Theorem \ref{realisesimplexthm}.

\bigskip

I began to consider this subject while visiting the University of California, Davis, supported by the Cecil King Travel Scholarship 2011. I wish to thank the London Mathematical Society and the Cecil King Foundation for their support, and UC Davis for their hospitality.
I am also grateful to Jennifer Schultens for her interest in this project. 


\section{Seifert surfaces in $\mathbb{R}^3$}\label{complementofpointsection}
\subsection{Identifying surfaces}

Assume for the whole of this section that $L$ is non-split.
Fix a point $p\in M$, and let $M_p=M\setminus\{p\}$.
Let $\inc\colon M_p\to M$ be the inclusion map.
Let $\widetilde{M}_p=\widetilde{M}\setminus\pi^{-1}(p)$. Then $\widetilde{M}_p$ is the infinite cyclic cover of $M_p$.

We can define the Kakimizu complex of $L$ in $M_p$.

\begin{definition}\label{mspldefn}
Define $\ms_p(L)$ to be the following flag simplicial complex. Its vertices are ambient isotopy classes of taut Seifert surfaces for $L$ in $M_p$. Two distinct vertices span an edge if they have representatives $R,R'$ such that a lift of $M_p \setminus R'$ to $\widetilde{M}_p$ intersects exactly two lifts of $M_p \setminus R$.
\end{definition}

Let $R$ be a taut Seifert surface for $L$ in $M_p$. Then $\inc(R)$ is a taut Seifert surface for $L$ in $M$. 
In addition, isotoping $R$ in $M_p$ only changes $\inc(R)$ by an isotopy in $M$.
Let $R'$ be another taut Seifert surface in $M_p$ such that $[R]$ and $[R']$ are adjacent in $\ms_p(L)$, and $R$ and $R'$ are tight.
That is, suppose a lift of $M_p \setminus R'$ to $\widetilde{M}_p$ intersects exactly two lifts of $M_p \setminus R$.
Then a lift of $M \setminus \inc(R')$ to $\widetilde{M}$ intersects exactly two lifts of $M \setminus \inc(R)$.
Thus $[\inc(R)]$ and $[\inc(R')]$ are at most distance 1 apart in $\ms(L)$. 

This shows that $\inc$ induces a simplicial map $\inc_*\colon\ms_p(L)\to\ms(L)$.
In general, $\inc_*$ will not be injective. On the other hand, it is clearly surjective, as any Seifert surface in $M$ can be made disjoint from $p$.
We will write $\inc_*(R)$ for $\inc_*([R])$.

\begin{lemma}\label{fibredgivesbijectionlemma}
Suppose $L$ is fibred (as an oriented link in $\Sphere$). Then $\inc_*$ is a bijection.
\end{lemma}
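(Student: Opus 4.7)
The plan hinges on two ingredients. First, the classical fact that when $L$ is fibred the fibre $F$ is the unique taut Seifert surface up to isotopy, so $\ms(L)$ is a single vertex $[F]$. Surjectivity of $\inc_*$ is then immediate: isotope $F$ to avoid $p$, and its class in $\ms_p(L)$ maps to $[F]$. Second, the structural feature specific to the fibred case: two disjoint isotopic copies of the fibre cut $M$ into \emph{two} product regions, namely the two halves of the mapping torus between them. This contrasts with the non-fibred setting of Corollary \ref{isotopyrelbdycor1}, where only one product region is produced.

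From the two-product-region feature it follows that any two fibres $F_{t_1}, F_{t_2}$ in $M_p$ are isotopic in $M_p$: the point $p$ lies in at most one of the two product regions bounded by them, and pushing $F_{t_1}$ to $F_{t_2}$ through the other gives an ambient isotopy avoiding $p$. So injectivity of $\inc_*$ reduces to showing that an arbitrary taut Seifert surface $R$ in $M_p$ is isotopic in $M_p$ to some fibre.

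To carry this out I would fix a fibre $F$ avoiding $p$ and simplify $R \cap F$ by innermost-disk and outermost-arc surgeries performed inside $M_p$. These surgeries take place in a regular neighbourhood of a disk in $F$ or in $R$, and since both surfaces lie in $M_p$ such a neighbourhood can be chosen to avoid $p$; hence all inessential components of $R \cap F$ can be removed in $M_p$. To eliminate any remaining essential curves and arcs of intersection I would pass to the infinite cyclic cover $\widetilde{M} = F \times \mathbb{R}$, where the lifts of $p$ form a discrete set at $\mathbb{R}$-coordinates $s_0 + \mathbb{Z}$, and the connected lift $\widetilde{R}$ is isotopic in the product $\widetilde{M}$ to some fibre $F \times \{t_0\}$. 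A height-adjustment isotopy pushing $\widetilde{R}$ off the lift $F \times \{0\}$, carried out equivariantly with respect to $\tau$ and with target height $t$ chosen in $\mathbb{R} \setminus (s_0 + \mathbb{Z})$, descends to an isotopy in $M_p$ making $R$ and $F$ disjoint. The two-product-region argument then isotopes $R$ to a fibre, completing the proof.

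The main obstacle is this equivariant, puncture-avoiding height-adjustment isotopy in $\widetilde{M}_p$. The naive vertical interpolation between $\widetilde{R}$ and $F \times \{t\}$ typically crosses several lifts of $p$ transversely, but each such crossing can be removed by a local perturbation that pushes the surface past the relevant lift of $p$ on the side dictated by the target height. Coordinating these perturbations equivariantly across all $n \in \mathbb{Z}$, while preserving the already achieved simplification of $R \cap F$, is the technical heart of the argument; I expect the discreteness of the set of lifts of $p$ and the freedom in choosing $t$ to make this manageable.
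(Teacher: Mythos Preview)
Your strategy is sound and the key insight---that in the fibred case any two disjoint parallel fibres cobound product regions on \emph{both} sides---is exactly what drives the paper's proof. But you have deployed it in a needlessly global way, and this is what produces the unresolved ``technical heart'' at the end.

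The paper's argument is much shorter and uses your insight \emph{locally}. Fix the fibration so that $M\setminus R\cong R\times(0,1)$ and say $p$ lies on the leaf $R\times\{1-\varepsilon\}$. The surface $R'$ that agrees with $R$ outside a small ball around $p$ but dips under $p$ inside that ball is isotopic to $R$ in $M_p$: simply push $R$ once around the fibration from $R\times\{0\}$ to $R\times\{1\}$; this is a setwise isotopy of $R$, avoids $p$ except in the small ball, and there it returns the surface to its original height from below, producing $R'$. So the elementary move ``push a small disc of the fibre across $p$'' is always realisable in $M_p$. It follows that \emph{any} isotopy of $R$ in $M$ can be converted to one in $M_p$ by replacing each transverse crossing of $p$ with this local detour. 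Injectivity of $\inc_*$ is immediate.

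By contrast, you first try to make $R$ disjoint from a fixed fibre $F$ inside $M_p$, and only then invoke the two-product-region trick. The disjointness step is where your difficulty lies: the equivariant, puncture-avoiding height adjustment in $\widetilde{M}_p$ is not actually carried out, only anticipated (``I expect \ldots\ to make this manageable''). That step can indeed be completed, but it is precisely the step the paper's local argument renders unnecessary. A cleaner variant of your approach would be to note that since $R$ itself is a fibre of \emph{some} fibration of $M$, a nearby leaf of that fibration gives a parallel copy $F'$ already disjoint from both $R$ and $p$; then your two-product-region argument applies directly without any intersection-simplification or height adjustment.
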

\begin{proof}
Let $R$ be a taut Seifert surface for $L$.
Since $L$ is fibred, the closure of $M\setminus R$ is of the form $R\times[0,1]$.
Isotoping $R$ from $R\times\{0\}$ to $R\times\{1\}$ will not in general preserve $R$ pointwise but will preserve it setwise.
Suppose that $p$ lies on $R\times\{1-\varepsilon\}$ for some small $\varepsilon>0$.
Then there is a copy $R'$ of $R$ that is isotopic to $R$ in $M_p$ that coincides with $R$ outside the $2\varepsilon$--neighbourhood $\nhd(p)$ of $p$ such that within this neighbourhood $R$ and $R'$ are as shown in Figure \ref{pic1}.
\begin{figure}[htbp]
\centering
\psset{xunit=.75pt,yunit=.75pt,runit=.75pt}
\begin{pspicture}(130,110)
{
\pscustom[linewidth=1,linecolor=black,linestyle=dashed,dash=4 4,fillstyle=solid,fillcolor=lightgray]
{
\newpath
\moveto(115,54.99999738)
\curveto(115,27.38575989)(92.61423749,4.99999738)(65,4.99999738)
\curveto(37.38576251,4.99999738)(15,27.38575989)(15,54.99999738)
\curveto(15,82.61423487)(37.38576251,104.99999738)(65,104.99999738)
\curveto(92.61423749,104.99999738)(115,82.61423487)(115,54.99999738)
\closepath
}
}
{
\pscustom[linestyle=none,fillstyle=solid,fillcolor=black]
{
\newpath
\moveto(70,54.99999738)
\curveto(70,52.23857363)(67.76142375,49.99999738)(65,49.99999738)
\curveto(62.23857625,49.99999738)(60,52.23857363)(60,54.99999738)
\curveto(60,57.76142113)(62.23857625,59.99999738)(65,59.99999738)
\curveto(67.76142375,59.99999738)(70,57.76142113)(70,54.99999738)
\closepath
}
}
{
\pscustom[linewidth=1,linecolor=black]
{
\newpath
\moveto(5,80)
\lineto(125,80)
}
}
{
\pscustom[linewidth=1,linecolor=black]
{
\newpath
\moveto(5,80)
\lineto(25,80)
\curveto(35,80)(45,30)(65,30)
\curveto(85,30)(95,80)(105,80)
\lineto(125,80)
}
}
{
\put(70,60){$p$}
\put(70,82){$R$}
\put(85,37){$R'$}
\put(40,15){$\nhd(p)$}
}
\end{pspicture}
\caption{\label{pic1}}
\end{figure}
From this we see that any isotopy of $R$ in $M$ can be changed to an isotopy of $R$ in $M_p$.
Hence $\inc_*$ is injective.
\end{proof}

We now assume for the rest of this section that $L$ is not fibred. In particular this means that $L$ is not the unknot.

\begin{lemma}\label{fixboundarylemma}
Let $R,R'$ be Seifert surfaces for $L$ that are isotopic in $M$ by an isotopy keeping the boundary fixed. 
Suppose also that they are isotopic in $M_p$. Then the isotopy in $M_p$ can be made to fix the boundary.
\end{lemma}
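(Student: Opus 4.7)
The plan is to start with the ambient isotopy $\Phi_t$ of $M$ given by hypothesis (which fixes $\partial M$ pointwise and carries $R$ to $R'$) and modify it so that $\Phi_t(R)$ avoids $p$ for every $t$; once the surface-level motion lies in $M_p$ and is trivial on $\partial R$, the isotopy extension theorem produces an ambient isotopy of $M_p$ keeping $\partial M$ pointwise fixed. The given isotopy $\Psi_t$ in $M_p$ enters only to certify that the obstruction to this modification vanishes.

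After a small rel-$\partial M$ perturbation of $\Phi_t$, the sweep-out $F\colon R\times [0,1]\to M$ defined by $F(x,t)=\Phi_t(x)$ is transverse to $p$, so $F^{-1}(p)$ is a finite set of signed points in the interior of $R\times(0,1)$; let $n(F)$ denote the algebraic count. The core step is to show $n(F)=0$. The analogous sweep-out $G(x,t)=\Psi_t(x)$ lands in $M_p$, so $n(G)=0$. Both $F$ and $G$ restrict to the inclusion of $R$ on $R\times\{0\}$, parametrise $R'$ on $R\times\{1\}$, and map $\partial R\times [0,1]$ into $\partial M$, hence avoid $p$ on all of $\partial(R\times[0,1])$. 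Because $L$ is non-split and non-fibred, $M$ is aspherical, and $F$ and $G$ induce the same homomorphism $\pi_1(R)\to\pi_1(M)$ (the inclusion), so obstruction theory furnishes a homotopy $H\colon R\times[0,1]\times[0,1]\to M$ from $F$ to $G$. Perturbing if necessary, we take $H$ transverse to $p$ and arrange $H$ restricted to the vertical face $\partial(R\times[0,1])\times[0,1]$ to avoid $p$; this last step is possible because the boundary restrictions of $F$ and $G$ carry no intersection with $p$, so the preimage of $p$ on the vertical face is a null-cobordant $1$-manifold that can be pushed off. The preimage $H^{-1}(p)$ is then a $1$-manifold giving an oriented cobordism between $F^{-1}(p)$ and $G^{-1}(p)$, yielding $n(F)=n(G)=0$.

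With $n(F)=0$ the crossings in $F^{-1}(p)$ pair up into pairs of opposite sign, and each pair can be removed by a local deformation of $\Phi_t$ supported in a small tube in $R\times(0,1)$ whose image lies in the interior of $M$: near each pair, indent the sweep-out so that the moving surface dodges $p$ instead of passing through it. The deformations preserve embeddedness, the endpoints $R$ and $R'$, and the triviality of the isotopy on $\partial R$. After finitely many cancellations the resulting surface isotopy lies entirely in $M_p$ and fixes $\partial R$ pointwise, and applying the isotopy extension theorem (with the ambient extension taken trivial in a collar of $\partial M$) produces the desired ambient isotopy of $M_p$ keeping $\partial M$ pointwise fixed.

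The principal technical obstacle is the vanishing $n(F)=n(G)$ in the second paragraph: one must relate the two sweep-outs $F,G$ despite the fact that $\Phi_1$ and $\Psi_1$ parametrise $R'$ differently and their restrictions to $\partial R$ are quite different, and one must arrange the auxiliary homotopy $H$ so as to control its intersection with $p$ both in the interior and on the vertical face. The non-fibred hypothesis is essential: in the fibred case the monodromy yields nontrivial loops in the space of isotopies from $R$ to $R'$ with nonzero intersection against $p$, which is precisely why Lemma \ref{fibredgivesbijectionlemma} requires a different argument.
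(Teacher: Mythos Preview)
Your approach has a genuine gap at exactly the point you flag as ``the principal technical obstacle'': arranging the homotopy $H$ to avoid $p$ on the vertical face $\partial(R\times[0,1])\times[0,1]$. First, a dimension slip: this vertical face is $3$--dimensional, so $H^{-1}(p)$ on it is generically a finite set of signed points, not a $1$--manifold. More importantly, the obstruction to pushing those points off is not automatically zero. Decompose the vertical face into $R\times\{0\}\times[0,1]$, $R\times\{1\}\times[0,1]$, and $\partial R\times[0,1]\times[0,1]$. On the first piece $F=G$; on the second you can homotope within $R'$ (using incompressibility and asphericity); but on the third piece $F$ is constant in the isotopy direction while $G$ records the motion $t\mapsto\Psi_t|_{\partial R}$ of $\partial R$ around $\partial M$. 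For each boundary circle $C\subset\partial R$ the obstruction to extending $H$ over $C\times[0,1]^2$ while avoiding $p$ is the class of the boundary torus in $H_2(M_p)$, and this class is precisely (a multiple of $[S^2_p]$ by) the meridional winding number of the loop $t\mapsto\Psi_t(x)$ for $x\in C$. That winding number is exactly the integer $m$ that the lemma is trying to show vanishes, so your argument is circular as written. Note also that asphericity of $M$ needs only non-splitness; it is not where the non-fibred hypothesis enters.

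The paper's proof bypasses all of this by working in the infinite cyclic cover: lift the $M_p$--isotopy to $\widetilde{M}_p$, so a lift $\widetilde{R}$ is carried to a lift $\widetilde{R}'$ of $R'$ with $\partial\widetilde{R}'=\tau^m(\partial\widetilde{R})$; the given rel-boundary isotopy in $M$ then lifts to show $\widetilde{R}'$ is isotopic to $\tau^m(\widetilde{R})$, hence $\widetilde{R}\simeq\tau^m(\widetilde{R})$ in $\widetilde{M}$, and non-fibredness forces $m=0$. This is where the non-fibred hypothesis is actually used, and once $m=0$ the boundary motion of $\Psi_t$ is meridionally trivial and can be straightened near $\partial M$. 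If you want to salvage your approach, you would need to prove $m=0$ first (essentially reproducing the paper's argument) before the homotopy $H$ can be controlled on the vertical face.
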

\begin{proof}
Let $\widetilde{R}$ be a lift of $R$ to $\widetilde{M}_p$. There is an isotopy of $\widetilde{R}$ in $\widetilde{M}_p$ to a lift $\widetilde{R}'$ of $R'$. 
Then $\partial \widetilde{R}'=\tau^m(\partial\widetilde{R})$ for some $m\in\mathbb{Z}$. 
In $\widetilde{M}$, there is also an isotopy from $\widetilde{R}'$ to $\tau^m(\widetilde{R})$.
Hence $\widetilde{R}$ is isotopic to $\tau^m(\widetilde{R})$ in $\widetilde{M}$.
As $L$ is not fibred, this implies that $m=0$.
\end{proof}

\begin{definition}\label{homeodefn}
Given a directed simple closed curve $\rho$ based at $p$, define a homeomorphism from $M$ to itself by isotoping $p$ once around $\rho$. Let $\phi_{\rho}$ be the restriction of this homeomorphism to $M_p$.  
\end{definition}

Here we want to think of $\rho$ as a directed path with both its endpoints at $p$. We will drop the term `directed'.

\begin{lemma}\label{pathexistslemma}
Let $R$ and $R'$ be taut Seifert surfaces for $L$ in $M_p$ such that $\inc_*(R)=\inc_*(R')$.
Then there exists a simple closed curve $\rho$ based at $p$ such that $[R']=[\phi_{\rho}(R)]$.
\end{lemma}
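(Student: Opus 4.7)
The strategy is to construct an ambient isotopy $\Phi\colon M\times[0,2]\to M$ with $\Phi_0=\mathrm{id}_M$, $\Phi_1(R)=R'$, $\Phi_2(R)=R'$ and $\Phi_2(p)=p$, and then take $\rho(t)=\Phi_t(p)$, which is a loop based at $p$. Because $\Phi_2$ fixes $p$ and is isotopic to the identity in $\mathrm{Homeo}(M)$ through $\Phi$, the evaluation fibration $\mathrm{Homeo}(M)\to M$, $f\mapsto f(p)$, identifies $\Phi_2|_{M_p}$ with the point-pushing $\phi_\rho$ up to isotopy in $M_p$; combined with $\Phi_2(R)=R'$ this yields $\phi_\rho(R)\sim R'$ in $M_p$, hence $[\phi_\rho(R)]=[R']$.

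To build $\Phi$, first apply the isotopy extension theorem to the hypothesis $\inc_*(R)=\inc_*(R')$ to obtain $\Phi_t$ on $[0,1]$ with $\Phi_1(R)=R'$, and set $q=\Phi_1(p)\in M\setminus R'$. Then extend $\Phi$ over $[1,2]$ by an ambient isotopy supported in a thin tubular neighbourhood of a chosen path $\rho_2$ from $q$ to $p$. Provided $\rho_2\subset M\setminus R'$, the support can be arranged disjoint from $R'$, so the extension fixes $R'$ pointwise, giving $\Phi_2(R)=\Phi_1(R)=R'$ together with $\Phi_2(p)=p$.

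The main obstacle is to arrange that $p$ and $q$ lie in the same component of $M\setminus R'$, so that such a $\rho_2$ exists. There is freedom to pre-compose $\Phi$ with an ambient isotopy preserving $R$ setwise and supported in the component $U$ of $M\setminus R$ containing $p$; this replaces $q$ by an arbitrary point of $\Phi_1(U)$, which is a single component of $M\setminus R'$. It thus suffices to check that $\Phi_1(U)$ coincides with the component $V$ of $M\setminus R'$ containing $p$. When $R$ (equivalently $R'$) is connected, both $M\setminus R$ and $M\setminus R'$ are connected (a meridian of each boundary component of $L$ crosses the surface once, joining its two local sides), so this is automatic. In general one must track how the component of $p$ in $M\setminus\Phi_t(R)$ evolves under the isotopy---it changes only at the isolated values of $t$ at which the moving surface $\Phi_t(R)$ sweeps through $p$---and modify the isotopy if necessary; I expect the non-fibred hypothesis on $L$ to play a role in ruling out a global obstruction.

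Finally, $\rho$ can be taken to be a simple closed curve: $\phi_\rho$ depends only on the based-homotopy class of $\rho$, and a generic perturbation rel basepoint makes any loop in the $3$-manifold $M$ embedded.
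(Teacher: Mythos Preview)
Your approach is essentially the paper's: track $p$ under an ambient isotopy of $M$ carrying $R$ to $R'$, then append a return path lying in $M\setminus R'$ to close up the loop. The paper's proof is two sentences to this effect.

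Your ``main obstacle'' is not an obstacle: $M\setminus R'$ is \emph{always} connected, even when $R'$ is disconnected. Your own meridian argument already proves this --- a meridian of any component of $L$ has algebraic intersection number $1$ with the Seifert surface $R'$ (this is the linking-number computation, which is indifferent to whether $R'$ is connected), and an odd algebraic intersection with some loop forces $R'$ to be non-separating in $M$. So the path $\rho_2$ from $q$ back to $p$ inside $M\setminus R'$ always exists; there is no need to track components through the isotopy, and the non-fibred hypothesis plays no role in this lemma.

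Your invocation of the evaluation fibration $\mathrm{Homeo}(M)\to M$ to identify $\Phi_2|_{M_p}$ with $\phi_\rho$ up to isotopy rel $p$ is more than the paper spells out, but it is the correct justification. In the paper this is absorbed into the definition of $\phi_\rho$ (Definition~\ref{homeodefn}): $\phi_\rho$ is \emph{defined} as the restriction to $M_p$ of any self-homeomorphism of $M$ obtained by isotoping $p$ once around $\rho$, and your $\Phi_2$ is exactly such a homeomorphism.
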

\begin{proof}
Let $\rho$ be the path of $p$ under the ambient isotopy of $M$ taking $\inc(R)$ to $\inc(R')$. 
Since $R$ and $R'$ are disjoint from $p$ and $R'$ is non-separating, we may ensure that $\rho$ has both its endpoints at $p$.
\end{proof}

Fix a taut Seifert surface $R$ for $L$, and a simple closed curve $\rho$ based at $p$.

\begin{lemma}\label{homotopelooplemma}
Changing $\rho$ to another simple closed curve within its homotopy class relative to its endpoints does not change the isotopy class of $\phi_{\rho}(R)$.
\end{lemma}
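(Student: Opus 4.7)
The plan is to show that $\phi_\rho$ and $\phi_{\rho'}$ are isotopic as self-homeomorphisms of $M_p$; this will immediately give $[\phi_\rho(R)]=[\phi_{\rho'}(R)]$ for any Seifert surface $R$ in $M_p$.

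Let $H\colon I\times I\to M$ be a homotopy rel endpoints from $\rho$ to $\rho'$, so that $\rho_u(s):=H(s,u)$ satisfies $\rho_0=\rho$, $\rho_1=\rho'$, and $\rho_u(0)=\rho_u(1)=p$ for every $u\in I$. This is a one-parameter family of loops at $p$ interpolating between $\rho$ and $\rho'$ (the intermediate $\rho_u$ need not be embedded, but this will not matter).

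Next, apply a parametrized version of the isotopy extension theorem to this family of paths of the point $p$. This produces a continuous two-parameter family of ambient isotopies $F\colon M\times I\times I\to M$ with $F(\cdot,0,u)=\mathrm{id}_M$ and $F(p,s,u)=\rho_u(s)$ for all $s,u\in I$, and such that each $F(\cdot,s,u)$ is a homeomorphism of $M$. Since $\rho_u(1)=p$, every time-one map $F(\cdot,1,u)$ fixes $p$ and so restricts to a self-homeomorphism $\psi_u$ of $M_p$. Because the isotopy class of $\phi_\gamma$ in $M_p$ depends only on the path $\gamma$ (the usual ambiguity in the choice of extending isotopy is itself an ambient isotopy of $M_p$), we have $\psi_0\simeq\phi_\rho$ and $\psi_1\simeq\phi_{\rho'}$ in $M_p$. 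The family $u\mapsto\psi_u$ is an isotopy, so $\phi_\rho$ and $\phi_{\rho'}$ are isotopic as self-homeomorphisms of $M_p$. Applying this isotopy to $R$ yields $\phi_\rho(R)\simeq\phi_{\rho'}(R)$ in $M_p$, as required.

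The only nontrivial step is the parametrized isotopy extension. This is standard: for a single point, isotopy extension follows from pushing along a small tubular flow about the path, and the construction can be made smoothly in the parameter $u$ using a partition of unity argument. I expect this to be the main (if mild) obstacle to writing out a fully detailed proof; no features of $L$, $R$ or the Kakimizu complex enter.
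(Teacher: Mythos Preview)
Your argument is correct and takes a genuinely different route from the paper's. The paper works concretely with the surface $R$: it first checks (via a picture) that isotoping $\rho$ through $R$ does not change $\phi_\rho(R)$ up to isotopy, and then reduces a general homotopy rel endpoints to a sequence of isotopies by replacing each crossing change of $\rho$ with itself by an isotopy that slides one strand around the basepoint. By contrast, you never touch $R$; you prove the stronger statement that the isotopy class of $\phi_\rho$ as a self-homeomorphism of $M_p$ depends only on $[\rho]\in\pi_1(M,p)$, using a one-parameter isotopy extension for the motion of the single point $p$. The well-definedness step (two ambient isotopies pushing $p$ along the same path give time-one maps that are isotopic rel $p$, since $(G'_s)^{-1}G_s$ is an isotopy fixing $p$) is clean and correct, and the parametrized extension for a $0$-dimensional submanifold is indeed elementary, as you note.

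What each approach buys: yours is shorter, more general, and makes explicit the action of $\pi_1(M,p)$ on $\ms_p(L)$ mentioned in Remark~\ref{actionremark}. The paper's approach, while more hands-on, has the advantage that its pictorial argument and crossing-change trick are reused verbatim later in the paper (in the proof that changing $\sigma$ within its homotopy class does not change $[\Phi_\sigma(R')]$, where a sphere rather than a point is being pushed), so it is not wasted effort in that context.
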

\begin{proof}
Suppose first that we isotope a point of $\rho$ across $R$, creating a new component of $\rho\setminus R$ that is contained within a small neighbourhood of a point of $R$.
Up to isotopy, this does not alter the surface $\phi_{\rho}(R)$, as can be seen from Figure \ref{pic2}.
\begin{figure}[htb]
\centering
\input{pic2}
\caption{\label{pic2}}
\end{figure}
From this we see that
isotoping $\rho$ relative to its endpoints does not change the isotopy class of $\phi_{\rho}(R)$.

In fact, when isotoping $\rho$, we can pull the endpoints of $\rho$ apart slightly, perform an isotopy, and then return the endpoints to the same point. Doing this will not change the resulting surface $\phi_{\rho}(R)$.
Knowing this, it follows that we may actually change $\rho$ to any other simple loop in its homotopy class relative to its endpoints. To see this, note that we may put any such homotopy into general position, so that the homotopy is only not an isotopy at finitely many points, where a crossing change takes place. We can replace any crossing change by an isotopy that sends one section of the arc around the end of the arc.
\end{proof}

\begin{remark}\label{actionremark}
Since $[\phi_{\sigma}(\phi_{\rho}(R))]=[\phi_{\rho\cdot\sigma}(R)]$ we see that $\pi_1(M;p)$ acts on $\ms_p(L)$.
\end{remark}

We may therefore assume that $\rho$ has been homotoped to minimise its intersection with $R$ while keeping its endpoints fixed. 
Note that $\phi_{\rho}(R)$ coincides with $R$ outside a neighbourhood of the points of $R\cap\rho$.

\begin{remark}\label{disjointpathremark}
If $\rho$ is disjoint from $R$ then $\phi_{\rho}(R)=R$. 
\end{remark}

In the converse direction, we have the following.

\begin{lemma}\label{pathdisjointlemma}
If $\phi_{\rho}(R)$ is isotopic to $R$ in $M_p$ then $\rho$ is disjoint from $R$.
\end{lemma}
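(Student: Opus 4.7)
The plan is to suppose for contradiction that the minimised $\rho$ meets $R$ and derive a contradiction. Set $R' = \phi_{\rho}(R)$. By Lemma \ref{fixboundarylemma} the given isotopy from $R$ to $R'$ in $M_p$ may be taken to fix $\partial R = L$, and by realising $\phi_{\rho}$ through an isotopy $\phi_t$ of $M$ supported in a neighbourhood of $\rho$ disjoint from $\partial M$, we may also arrange that $\phi_t$ fixes $\partial M$ pointwise.

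The first substantive step is to lift everything to the infinite cyclic cover $\widetilde{M}$. Fix a lift $\widetilde{R} = \widetilde{R}_0$ of $R$, and let $\widetilde{p}_n \in V_n$ denote the lifts of $p$. Let $\widetilde{\phi}_t$ be the lift of $\phi_t$ to $\widetilde{M}$ starting at the identity; then $\widetilde{\phi}_1(\widetilde{p}_n) = \widetilde{p}_{n+k}$ with $k = \lk(\rho)$, and $\widetilde{R}' := \widetilde{\phi}_1(\widetilde{R})$ is the unique lift of $R'$ with $\partial \widetilde{R}' = \partial \widetilde{R}$. Lifting the $M_p$-isotopy (keeping the boundary fixed) produces an isotopy $\widetilde{R} \simeq \widetilde{R}'$ in $\widetilde{M}_p$. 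Since both $\widetilde{R}$ and $\widetilde{R}'$ separate $\widetilde{M}$ and this $\widetilde{M}_p$-isotopy preserves the side of the varying surface that contains each $\widetilde{p}_n$, comparing with the index shift under $\widetilde{\phi}_1$ immediately forces $k = 0$; thus $\lk(\rho) = 0$.

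It remains to upgrade this to $\rho \cap R = \emptyset$. Here I would apply Corollary \ref{isotopyrelbdycor1} to $R$ and $R'$ in $M$: under the assumption $R \neq R'$ (equivalently $\rho \cap R \neq \emptyset$) it produces a product region $T$ between them with $\rho_T = \partial T$. Because $R$ and $R'$ coincide outside a small neighbourhood of $\rho$, the region $T$ lies inside that neighbourhood, and a local analysis of the modification of $R$ near each $q_i \in \rho \cap R$ should show that the end-arcs of $\rho$ adjacent to the basepoint run through $T$, forcing $p \in T$. Then some lift $\widetilde{T} \subset \widetilde{M}$ of $T$ contains a lift $\widetilde{p}_n$ of $p$; but the $\widetilde{M}_p$-isotopy provides a second parallelism between $\widetilde{R}$ and $\widetilde{R}'$ that avoids $\pi^{-1}(p)$. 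Comparing this parallelism with $\widetilde{T}$ via Proposition \ref{surfaceinproductprop} applied inside $\widetilde{T}$ is intended to yield the desired contradiction.

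The main obstacle will be this last comparison. One has to verify rigorously that $p$ sits in the interior of $T$ whenever $\rho$ meets $R$, and then extract a genuine contradiction from the coexistence of the two parallelisms despite $\widetilde{M}_p$ being reducible. Working entirely inside the product region $\widetilde{T}$ (where the ambient reducibility is irrelevant) and using Proposition \ref{surfaceinproductprop} to cut down inductively to a sub-product region that must contain no lift of $p$ yet has to contain $\widetilde{p}_n$ is the route I would pursue.
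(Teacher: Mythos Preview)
Your plan is substantially more elaborate than the paper's argument, and the obstacles you flag are genuine rather than routine. The step that is hardest to salvage is the claim that $p$ lies in the product region $T$ furnished by Corollary~\ref{isotopyrelbdycor1}. That corollary only guarantees the existence of \emph{some} product region between $R$ and $R'=\phi_\rho(R)$; since $R$ and $R'$ coincide outside a neighbourhood of the points of $\rho\cap R$, there are in general several such regions, and nothing in the statement selects one containing $p$. Your appeal to a ``local analysis of the modification of $R$ near each $q_i$'' would have to be made precise, and it is not clear this can be done without essentially carrying out the whole argument by other means. The final comparison of two parallelisms via Proposition~\ref{surfaceinproductprop} is likewise left as a plan rather than an argument; working inside $\widetilde T$ does sidestep reducibility of $\widetilde M_p$, but you have not said what contradiction actually emerges.

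The paper avoids all of this with a short, direct idea. One modifies $\rho$ (without changing $|\rho\cap R|$) so that part of it becomes an arc $\sigma'$ running between a point of $\partial M$ and $p$ which is disjoint from $\phi_\rho(R)$ but still meets $R$. Both endpoints of $\sigma'$ are fixed by the ambient isotopy in $M_p$ carrying a copy of $\phi_\rho(R)$ to $R$: one endpoint is $p$, and the other lies on $\partial M$, which is fixed by Lemma~\ref{fixboundarylemma}. Transporting $\sigma'$ by this ambient isotopy therefore yields an arc homotopic to $\sigma'$ rel endpoints and disjoint from $R$, directly contradicting the assumed minimality of $|\rho\cap R|$. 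No passage to the infinite cyclic cover, no product-region machinery, and no separate computation of $\lk(\rho)$ are required.
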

\begin{proof}
Assume that $\phi_{\rho}(R)$ is isotopic to $R$ but $\rho$ is not disjoint from $R$.
Without loss of generality, assume that $\rho$ first crosses $R$ in the positive direction (the direction corresponding to $\tau$). 

Let $x$ be the point on $R$ where $\rho$ first meets it, and let $\sigma$ be a path in $R$ that is away from any other points of $\rho$, running from $x$ to a point on $\partial R$.
Push $\sigma$ downwards off $R$, so that $x$ now lies on $\rho$ immediately before it reaches $R$, and the other end of $\sigma$ still lies on $\partial M$.
Isotope $\rho$ so that it runs to $x$, runs along $\sigma$ and back again, and then continues as before.
This does not change $\rho\cap R$.
Let $\sigma'$ be the part of $\rho$ from where is meets $\partial M$. Then $\sigma'$ is disjoint from $\phi_{\rho}(R)$ but not from $R$.

By Lemma \ref{fixboundarylemma}, we may isotope a copy $R'$ of
$\phi_{\rho}(R)$ to $R$ in $M_p$ keeping its boundary fixed.
The ambient isotopy moves $\sigma'$, but keeps its endpoints fixed. After the isotopy, $\sigma'$ is disjoint from $R$. This contradicts that $\rho$ was chosen with $\rho\cap R$ minimal.
\end{proof}

Now assume that $[\phi_{\rho}(R)]$ is distinct from $[R]$. That is, suppose that $\rho$ is not (and cannot be homotoped to be) disjoint from $R$.
Consider a second path $\rho'$. 
Suppose that $\phi_{\rho'}(R)$ is also not isotopic to $R$. When is $\phi_{\rho'}(R)$ isotopic to $\phi_{\rho}(R)$?

\begin{lemma}\label{pathcoincidelemma}
Suppose $\phi_{\rho}(R)$ and $\phi_{\rho'}(R)$ are isotopic in $M_p$.
Then $\rho'$ can be homotoped relative to its endpoints so that the following holds.
The paths $\rho$ and $\rho'$ first meet $R$ at the same point, after which they coincide.
That is, there exists a simple closed curve $\sigma$ based at $p$ and disjoint from $R$ such that $[\rho']=[\sigma\cdot\rho]$.
\end{lemma}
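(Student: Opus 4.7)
The natural candidate is $\sigma := \rho'\cdot\rho^{-1}$. I first verify the algebraic identity $\phi_{\rho'\cdot\rho^{-1}}(R)\cong R$ in $M_p$. Combining Remark \ref{actionremark} with $\phi_{\rho^{-1}}\circ\phi_\rho=\phi_{\rho\cdot\rho^{-1}}=\mathrm{id}$ gives $\phi_\rho^{-1}=\phi_{\rho^{-1}}$, and then
\[
\phi_{\rho'\cdot\rho^{-1}}(R)=\phi_{\rho^{-1}}\bigl(\phi_{\rho'}(R)\bigr)\cong\phi_{\rho^{-1}}\bigl(\phi_\rho(R)\bigr)=R,
\]
using the hypothesis $\phi_{\rho'}(R)\cong\phi_\rho(R)$.

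Since Definition \ref{homeodefn} and Lemma \ref{pathdisjointlemma} are stated for simple loops, I next pass to a simple representative of the class $[\rho'\cdot\rho^{-1}]$ rel endpoints. Because $M_p$ is a $3$--manifold, a generic immersed loop based at $p$ has only transverse double points, each of which can be removed by a local isotopy inside a small $3$--ball; this yields a simple loop $\sigma$ with $[\sigma]=[\rho'\cdot\rho^{-1}]$ rel endpoints. By Lemma \ref{homotopelooplemma} (and well-definedness of the $\pi_1$--action in Remark \ref{actionremark}) we still have $\phi_\sigma(R)\cong R$. Homotoping $\sigma$ rel endpoints further so as to minimise $|\sigma\cap R|$, while maintaining simplicity by $3$--dimensional general position, Lemma \ref{pathdisjointlemma} applies to give $\sigma\cap R=\emptyset$. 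Thus $\sigma$ is a simple closed curve based at $p$, disjoint from $R$, with
\[
[\sigma\cdot\rho]=[\rho'\cdot\rho^{-1}\cdot\rho]=[\rho'].
\]

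To obtain the geometric formulation in the statement, homotope $\rho'$ within its class rel endpoints to the concatenation $\sigma\cdot\rho$, with a small perturbation near the splice point at $p$ to avoid the self-crossing there; the $3$--dimensionality of $M_p$ together with $\sigma\cap R=\emptyset$ ensures that this perturbation can be made without creating new intersections with $R$. Since $\sigma$ avoids $R$, the first point where the resulting representative of $\rho'$ meets $R$ is exactly the first meeting point $x$ of $\rho$ with $R$, and from $x$ onwards $\rho'$ coincides with $\rho$ as a path. The main subtlety is in the second paragraph: simultaneously preserving simplicity of $\sigma$ and minimising $|\sigma\cap R|$ so as to legitimately invoke Lemma \ref{pathdisjointlemma}; both are handled by $3$--manifold general position, with Lemma \ref{homotopelooplemma} guaranteeing that $\phi_\sigma(R)$ is unaffected by the intermediate modifications.
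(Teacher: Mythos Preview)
Your argument is correct and is precisely the alternative proof the paper itself flags in the Remark immediately following Lemma~\ref{pathcoincidelemma}: ``A shorter, but less instructive, proof can be given using Lemma~\ref{pathdisjointlemma} and Remark~\ref{actionremark}.'' You set $\sigma=\rho'\cdot\rho^{-1}$, use the $\pi_1$--action to get $\phi_\sigma(R)\cong R$, pass to a simple representative minimising $|\sigma\cap R|$, and invoke Lemma~\ref{pathdisjointlemma} to force $\sigma\cap R=\emptyset$.

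By contrast, the paper's own proof is a direct geometric construction: it routes $\rho'$ out to $\partial M$ along a path $\sigma_y$ just below $R$, carries $\sigma_y$ along the boundary-fixing isotopy from $\phi_{\rho'}(R)$ to $\phi_\rho(R)$, and then explicitly cleans up the intersections of $\sigma_y$ with $R$ by analysing the discs $D_0,\ldots,D_4$ and the associated $3$--balls near the crossing points $x_i$ of $\rho$ with $R$. The payoff of the paper's approach is that one sees concretely how $\rho'$ is rerouted to agree with $\rho$ after its first hit on $R$; this hands-on picture is reused in later arguments (e.g.\ Proposition~\ref{adjacenttopathsprop} and Remark~\ref{boundintersectionremark}) where one needs control over how many times various auxiliary paths cross $R$ or $R'$. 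Your route is cleaner but treats Lemma~\ref{pathdisjointlemma} as a black box, which is fine for the statement at hand.

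One small stylistic point: in a $3$--manifold a generic loop is already embedded (since $2\cdot 1<3$), so you do not need to speak of ``transverse double points'' and then remove them; general position gives simplicity directly, and a further small perturbation keeps $|\sigma\cap R|$ unchanged since the intersection with the surface $R$ is already transverse.
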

\begin{proof}
Let $y$ be a point on $\rho'$ just before it first meets $R$. As in the proof of Lemma \ref{pathdisjointlemma}, isotope $\rho'$ so that it runs to $y$, then follows a path immediately below $R\setminus(\rho\cup\rho')$ to $\partial M$, returns along this path to $y$, and finally continues along its original path to $p$.
Let $\sigma_y$ be the section of $\rho'$ from where it meets $\partial M$.
Then $\sigma_y$ is disjoint from $\phi_{\rho'}(R)$. 

Again Lemma \ref{fixboundarylemma} shows that the isotopy between $\phi_{\rho}(R)$ and $\phi_{\rho'}(R)$ can be made to fix the boundary.
Take a copy $R_{\rho'}$ of $\phi_{\rho'}(R)$, and isotope it to coincide with $\phi_{\rho}(R)$, moving (the interior of) $\sigma_y$ as needed in the process. Then $\sigma_y$ is disjoint from $\phi_{\rho}(R)$.

Let $x_1,\ldots,x_k$ be the points of $\rho\cap R$, in order as measured along $\rho$.
In a neighbourhood of $x_1$ there is a disc $D_0$ in $R$ that is properly embedded in the complement of $\phi_{\rho}(R)$. The boundary of this disc also bounds a disc $D_1$ in $\phi_{\rho}(R)$. The sphere $D_0\cup D_1$ bounds a 3--ball $B_p$ in $M$ that contains $p$.
Since $\sigma_y(1)=p$ but $\sigma_y(0)$ lies outside $B_p$, the arc $\sigma_y$ must cross $D_0$ an odd number of times.

Suppose $|D_0\cap \sigma_y|\geq 3$. Choose an arc $\sigma_p$ in $B_p$ from $p$ to $D_1$ that is disjoint from $\sigma_y$ except at $p$.
The boundary of a regular neighbourhood of $\sigma_p$ in $B_p$ is a disc $D_2$ properly embedded in $B_p$, and $|D_2\cap\sigma_y|=1$. 
By isotoping $D_2$ through $B_p$ to $D_0$ we can find an isotopy of $\sigma_y$ that reduces $|D_0\cap\sigma_y|$ to 1.
We may then isotope $\sigma_y$ to coincide with $\rho$ within $B_p$. 

Suppose that there is a point $y_0$ of $\sigma_y\cap R$ that occurs earlier on $\sigma_y$. 
We may assume $y_0$ is the first such point as measured along $\rho'$.
Since $\sigma_y$ is disjoint from $\phi_{\rho}(R)$, there is some $i\leq k$ such that $y_0$ lies in a neighbourhood of $x_i$.
We have already seen that $i\neq 1$.
Therefore there is an annulus $A$ in $R$, contained in a neighbourhood of $x_i$, that is properly embedded in the complement of $\phi_{\rho}(R)$ and contains $y_0$.
The two boundary components of $A$ each bound a disc in $\phi_{\rho}(R)$. Call these discs $D_3$ and $D_4$. Figure \ref{pic3} shows this schematically.
The sphere $A\cup D_3\cup D_4$ bounds a 3--ball in $M$ with interior disjoint from $B_p$.
Thus there is an isotopy that pushes $\sigma_y$ out of this 3--ball and reduces $|\sigma_y\cap R|$.
\begin{figure}[htbp]
\centering
\psset{xunit=.25pt,yunit=.25pt,runit=.25pt}
\begin{pspicture}(1223,763)
{
\newgray{lightgrey}{0.85}
}
{
\pscustom[linewidth=2,linecolor=black,fillstyle=solid,fillcolor=lightgrey]
{
\newpath
\moveto(260,500)
\lineto(20,220)
\lineto(1040,220)
\lineto(1200,500)
\lineto(260,500)
\closepath
}
}
{
\pscustom[linestyle=none,fillstyle=solid,fillcolor=lightgray]
{
\newpath
\moveto(681.5,351.49999738)
\curveto(681.5,323.88575989)(596.43410247,301.49999738)(491.5,301.49999738)
\curveto(386.56589753,301.49999738)(301.5,323.88575989)(301.5,351.49999738)
\curveto(301.5,379.11423487)(386.56589753,401.49999738)(491.5,401.49999738)
\curveto(596.43410247,401.49999738)(681.5,379.11423487)(681.5,351.49999738)
\closepath
}
}
{
\pscustom[linestyle=none,fillstyle=solid,fillcolor=lightgrey]
{
\newpath
\moveto(560,351.49999738)
\curveto(560,334.93145489)(528.65993249,321.49999738)(490,321.49999738)
\curveto(451.34006751,321.49999738)(420,334.93145489)(420,351.49999738)
\curveto(420,368.06853988)(451.34006751,381.49999738)(490,381.49999738)
\curveto(528.65993249,381.49999738)(560,368.06853988)(560,351.49999738)
\closepath
}
}
{
\pscustom[linewidth=2,linecolor=black]
{
\newpath
\moveto(301.5,351.5)
\lineto(301.5,691.5)
\curveto(301.5,691.5)(302.77178,741.49999)(491.5,741.49999)
\curveto(681.5,741.49999)(681.5,691.5)(681.5,691.5)
\lineto(681.5,351.5)
\moveto(420,220)
\lineto(420,100)
\curveto(420,100)(420.13165,21.5)(701.5,21.5)
\curveto(981.5,21.5)(980,100)(980,100)
\lineto(980,220)
\moveto(560,220)
\curveto(560,220)(557.86393,161.5)(701.5,161.5)
\curveto(841.5,161.5)(840,220)(840,220)
}
}
{
\pscustom[linewidth=2,linecolor=black,linestyle=dashed,dash=3 3]
{
\newpath
\moveto(301.5,351.4999997)
\curveto(301.50000487,379.11423719)(386.56590635,401.49999866)(491.50000882,401.49999738)
\curveto(596.2898997,401.4999961)(681.29593932,379.17342765)(681.49964102,351.59719268)
\moveto(420,351.49999878)
\curveto(420.00000179,368.06854127)(451.34007076,381.49999815)(490.00000325,381.49999738)
\curveto(528.65993574,381.49999661)(560.00000179,368.06853849)(560,351.49999599)
\curveto(560,351.46390352)(559.99984801,351.42781107)(559.99954406,351.39171883)
}
}
{
\pscustom[linewidth=2,linecolor=black]
{
\newpath
\moveto(840,351.49999878)
\curveto(840.00000179,368.06854127)(871.34007076,381.49999815)(910.00000325,381.49999738)
\curveto(948.55192467,381.49999662)(979.8470691,368.14070126)(979.99945316,351.61857829)
}
}
{
\pscustom[linewidth=3,linecolor=gray]
{
\newpath
\moveto(341.5,141.5)
\lineto(361.5,581.5)
\moveto(491.5,571.5)
\lineto(491.5,351.5)
\lineto(491.5,101.5)
\lineto(911.5,101.5)
\lineto(911.5,561.5)
}
}
{
\pscustom[linestyle=none,fillstyle=solid,fillcolor=gray]
{
\newpath
\moveto(340.24674856,113.92846833)
\curveto(341.5708648,113.86828122)(342.59548181,112.74608164)(342.5352947,111.42196541)
\curveto(342.4751076,110.09784917)(341.35290802,109.07323216)(340.02879178,109.13341926)
\curveto(338.70467555,109.19360636)(337.68005854,110.31580595)(337.74024564,111.63992218)
\curveto(337.80043274,112.96403842)(338.92263232,113.98865543)(340.24674856,113.92846833)
\closepath
\moveto(340.723529,124.417638)
\curveto(342.04764524,124.3574509)(343.07226225,123.23525131)(343.01207514,121.91113508)
\curveto(342.95188804,120.58701884)(341.82968846,119.56240183)(340.50557222,119.62258893)
\curveto(339.18145599,119.68277604)(338.15683898,120.80497562)(338.21702608,122.12909185)
\curveto(338.27721318,123.45320809)(339.39941276,124.4778251)(340.723529,124.417638)
\closepath
\moveto(341.20030944,134.90680767)
\curveto(342.52442568,134.84662057)(343.54904269,133.72442099)(343.48885558,132.40030475)
\curveto(343.42866848,131.07618851)(342.3064689,130.0515715)(340.98235266,130.11175861)
\curveto(339.65823643,130.17194571)(338.63361942,131.29414529)(338.69380652,132.61826153)
\curveto(338.75399362,133.94237776)(339.8761932,134.96699477)(341.20030944,134.90680767)
\closepath
\moveto(362.01764734,592.88824139)
\curveto(363.34176357,592.82805429)(364.36638058,591.70585471)(364.30619348,590.38173847)
\curveto(364.24600638,589.05762224)(363.1238068,588.03300523)(361.79969056,588.09319233)
\curveto(360.47557432,588.15337943)(359.45095731,589.27557901)(359.51114442,590.59969525)
\curveto(359.57133152,591.92381149)(360.6935311,592.9484285)(362.01764734,592.88824139)
\closepath
\moveto(362.49442778,603.37741107)
\curveto(363.81854401,603.31722396)(364.84316102,602.19502438)(364.78297392,600.87090815)
\curveto(364.72278682,599.54679191)(363.60058724,598.5221749)(362.276471,598.582362)
\curveto(360.95235476,598.6425491)(359.92773775,599.76474869)(359.98792486,601.08886492)
\curveto(360.04811196,602.41298116)(361.17031154,603.43759817)(362.49442778,603.37741107)
\closepath
\moveto(362.97120822,613.86658074)
\curveto(364.29532445,613.80639364)(365.31994146,612.68419405)(365.25975436,611.36007782)
\curveto(365.19956726,610.03596158)(364.07736768,609.01134457)(362.75325144,609.07153167)
\curveto(361.4291352,609.13171878)(360.40451819,610.25391836)(360.4647053,611.57803459)
\curveto(360.5248924,612.90215083)(361.64709198,613.92676784)(362.97120822,613.86658074)
\closepath
\moveto(911.5,572.90000004)
\curveto(912.82548342,572.90000004)(913.90000004,571.82548342)(913.90000004,570.5)
\curveto(913.90000004,569.17451658)(912.82548342,568.09999996)(911.5,568.09999996)
\curveto(910.17451658,568.09999996)(909.09999996,569.17451658)(909.09999996,570.5)
\curveto(909.09999996,571.82548342)(910.17451658,572.90000004)(911.5,572.90000004)
\closepath
\moveto(911.5,583.40000004)
\curveto(912.82548342,583.40000004)(913.90000004,582.32548342)(913.90000004,581)
\curveto(913.90000004,579.67451658)(912.82548342,578.59999996)(911.5,578.59999996)
\curveto(910.17451658,578.59999996)(909.09999996,579.67451658)(909.09999996,581)
\curveto(909.09999996,582.32548342)(910.17451658,583.40000004)(911.5,583.40000004)
\closepath
\moveto(911.5,593.90000004)
\curveto(912.82548342,593.90000004)(913.90000004,592.82548342)(913.90000004,591.5)
\curveto(913.90000004,590.17451658)(912.82548342,589.09999996)(911.5,589.09999996)
\curveto(910.17451658,589.09999996)(909.09999996,590.17451658)(909.09999996,591.5)
\curveto(909.09999996,592.82548342)(910.17451658,593.90000004)(911.5,593.90000004)
\closepath
}
}
{
\pscustom[linestyle=none,fillstyle=solid,fillcolor=black]
{
\newpath
\moveto(496.5,351.49999738)
\curveto(496.5,348.73857363)(494.26142375,346.49999738)(491.5,346.49999738)
\curveto(488.73857625,346.49999738)(486.5,348.73857363)(486.5,351.49999738)
\curveto(486.5,354.26142113)(488.73857625,356.49999738)(491.5,356.49999738)
\curveto(494.26142375,356.49999738)(496.5,354.26142113)(496.5,351.49999738)
\closepath
\moveto(496.5,571.49999738)
\curveto(496.5,568.73857363)(494.26142375,566.49999738)(491.5,566.49999738)
\curveto(488.73857625,566.49999738)(486.5,568.73857363)(486.5,571.49999738)
\curveto(486.5,574.26142113)(488.73857625,576.49999738)(491.5,576.49999738)
\curveto(494.26142375,576.49999738)(496.5,574.26142113)(496.5,571.49999738)
\closepath
\moveto(356.5,351.49999738)
\curveto(356.5,348.73857363)(354.26142375,346.49999738)(351.5,346.49999738)
\curveto(348.73857625,346.49999738)(346.5,348.73857363)(346.5,351.49999738)
\curveto(346.5,354.26142113)(348.73857625,356.49999738)(351.5,356.49999738)
\curveto(354.26142375,356.49999738)(356.5,354.26142113)(356.5,351.49999738)
\closepath
\moveto(916.5,351.49999738)
\curveto(916.5,348.73857363)(914.26142375,346.49999738)(911.5,346.49999738)
\curveto(908.73857625,346.49999738)(906.5,348.73857363)(906.5,351.49999738)
\curveto(906.5,354.26142113)(908.73857625,356.49999738)(911.5,356.49999738)
\curveto(914.26142375,356.49999738)(916.5,354.26142113)(916.5,351.49999738)
\closepath
}
}
{
\pscustom[linewidth=2,linecolor=black,linestyle=dashed,dash=3 3]
{
\newpath
\moveto(840,350)
\lineto(840,220)
\moveto(980,350)
\lineto(980,220)
\moveto(420,220)
\lineto(421.5,621.5)
\curveto(421.5,621.5)(422.75656,641.5)(491.5,641.5)
\curveto(561.5,641.5)(561.5,621.5)(561.5,621.5)
\lineto(560,220)
}
}
{
\pscustom[linewidth=2,linecolor=black]
{
\newpath
\moveto(20,220)
\lineto(1040,220)
}
}
{
\pscustom[linewidth=2,linecolor=black]
{
\newpath
\moveto(980,351.49999738)
\curveto(980,334.93145489)(948.65993249,321.49999738)(910,321.49999738)
\curveto(871.34006751,321.49999738)(840,334.93145489)(840,351.49999738)
\curveto(840,351.63195533)(840.0020315,351.76391183)(840.00609438,351.89585829)
\moveto(681.5,351.49999738)
\curveto(681.5,323.88575989)(596.43410247,301.49999738)(491.5,301.49999738)
\curveto(386.56589753,301.49999738)(301.5,323.88575989)(301.5,351.49999738)
\lineto(301.5,351.4999997)
}
}
{
\pscustom[linewidth=2,linecolor=black,linestyle=dashed,dash=3 3]
{
\newpath
\moveto(560,351.49999738)
\curveto(560,334.93145489)(528.65993249,321.49999738)(490,321.49999738)
\curveto(451.34006751,321.49999738)(420,334.93145489)(420,351.49999738)
\curveto(420,351.62357024)(420.00178153,351.74714192)(420.00534449,351.87070534)
}
}
{
\put(350,150){$\rho'$}
\put(80,240){$R$}
\put(930,330){$x_{i-1}$}
\put(920,410){$\rho$}
\put(610,330){$A$}
\put(500,330){$x_i$}
\put(360,330){$y_0$}
\put(500,540){$p$}
\put(570,540){$D_4$}
\put(690,600){$D_3$}
}
\end{pspicture}
\caption{\label{pic3}}
\end{figure}
Repeating this for any other points of $\sigma_y\cap R$, we may arrange that $\sigma_y$ first meets $R$ when it begins to coincide with $\rho$.
The same is then true of $\rho'$.
\end{proof}

\begin{remark}
A shorter, but less instructive, proof can be given using Lemma \ref{pathdisjointlemma} and Remark \ref{actionremark}.
\end{remark}

\begin{remark}
Now that we know when two arcs give the same surface, to describe a surface we only need to find some suitable arc.
\end{remark}
\subsection{Adjacency of surfaces}

We now have a good idea of when two taut Seifert surfaces in $M_p$ are distinct. When are they adjacent in the Kakimizu complex?
Recall that we have already seen that, for two surfaces $R$ and $R'$, if $[R]$ and $[R']$ are adjacent in $\ms_p(L)$ then $\inc_*(R)$ and $\inc_*(R')$ are either adjacent or the same in $\ms(L)$.

\begin{lemma}\label{homeolemma}
Let $R$, $R'$ be taut Seifert surfaces for $L$, and let $\rho$ be a simple closed curve based at $p$.
Then $[\phi_{\rho}(R)]$ and $[\phi_{\rho}(R')]$ are adjacent in $\ms_p(L)$ if and only if $[R]$ and $[R']$ are.
\end{lemma}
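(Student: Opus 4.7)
The map $\phi_\rho$ is the restriction to $M_p$ of an ambient self-homeomorphism $\Phi_\rho$ of $M$ that fixes $p$ and is isotopic to $\mathrm{id}_M$ via the very isotopy that drags $p$ once around $\rho$. The essence of the argument I would give is that $\phi_\rho$ lifts to a $\tau$-equivariant self-homeomorphism of $\widetilde M_p$, under which the intersection pattern of lifts defining adjacency in $\ms_p(L)$ is transported verbatim.

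First, I would lift the ambient isotopy from $\mathrm{id}_M$ to $\Phi_\rho$, starting at $\mathrm{id}_{\widetilde M}$, to obtain a canonical lift $\widetilde\Phi_\rho$ of $\Phi_\rho$. Because the isotopy begins at the identity (which commutes with $\tau$) and commutation with $\tau$ is a closed condition preserved along the lifted path, the endpoint $\widetilde\Phi_\rho$ also satisfies $\tau\circ\widetilde\Phi_\rho=\widetilde\Phi_\rho\circ\tau$. Since $\Phi_\rho(p)=p$, removing $\pi^{-1}(p)$ yields a $\tau$-equivariant self-homeomorphism $\widetilde\phi_\rho\colon\widetilde M_p\to\widetilde M_p$.

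Next, $\widetilde\phi_\rho$ sends lifts of $M_p\setminus R$ bijectively onto lifts of $M_p\setminus\phi_\rho(R)$, and likewise for $R'$, while preserving their relative positions under the $\tau$-action. Thus a lift $V$ of $M_p\setminus R'$ meets exactly two lifts of $M_p\setminus R$ if and only if the lift $\widetilde\phi_\rho(V)$ of $M_p\setminus\phi_\rho(R')$ meets exactly two lifts of $M_p\setminus\phi_\rho(R)$. Similarly, $[R]\neq[R']$ in $\ms_p(L)$ if and only if $[\phi_\rho(R)]\neq[\phi_\rho(R')]$, because $\phi_\rho$ is a self-homeomorphism of $M_p$. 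Combining these observations, the adjacency criterion of Definition \ref{mspldefn} holds for $(R,R')$ if and only if it holds for $(\phi_\rho(R),\phi_\rho(R'))$, which is exactly the claim.

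The only step that really needs care is the verification that the chosen lift $\widetilde\Phi_\rho$ commutes with $\tau$; once that is in hand, the remainder is a direct transport of a topologically defined condition along a homeomorphism, and no combinatorial or isotopy-theoretic input beyond what has already been set up is required.
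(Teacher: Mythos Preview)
Your argument is correct. The paper actually states this lemma without proof, treating it as immediate from the fact that $\phi_\rho$ is a self-homeomorphism of $M_p$ (so that any condition defined intrinsically in terms of $M_p$ and its infinite cyclic cover is preserved). Your proposal supplies the details the paper omits: you make explicit why $\phi_\rho$ lifts to a $\tau$-equivariant homeomorphism of $\widetilde M_p$ by lifting the ambient isotopy in $M$ starting from the identity, and then observe that such a lift carries the lift-intersection pattern of Definition~\ref{mspldefn} for $(R,R')$ to that for $(\phi_\rho(R),\phi_\rho(R'))$. This is exactly the right justification, and your care about $\tau$-equivariance is the only point with any content; the rest is, as you say, transport along a homeomorphism.
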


\begin{lemma}\label{meetoncelemma1}
Let $R$ be a taut Seifert surface for $L$, 
and let $\rho$ be a simple closed curve based at $p$ that meets $R$ once. Then $[R]$ and $[\phi_{\rho}(R)]$ are adjacent in $\ms_p(L)$.
\end{lemma}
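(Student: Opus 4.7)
The plan is to construct an explicit lift to $\widetilde{M}_p$ that witnesses the adjacency. By Lemma \ref{homotopelooplemma}, after a homotopy of $\rho$ relative to its endpoints I may assume $\rho$ meets $R$ transversely in the single point $x=\rho\cap R$, and without loss of generality that this crossing is positive, so $\rho$ has linking number $1$ with $L$. Then $\phi_\rho$ is realised by an ambient isotopy of $M$ supported in a small neighbourhood $N$ of $\rho$ that pushes the disc $D=R\cap N$ across $p$; outside $N$, $\phi_\rho(R)$ coincides with $R$.

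Next I would lift this picture to $\widetilde{M}$. Let $V_0$ be the lift of $M\setminus R$ containing a chosen lift $\tilde{p}$ of $p$, and set $V_n=\tau^n V_0$, so $\tau^n \tilde{p}\in V_n$. Because $\rho$ has linking number $1$, the lift $\tilde\rho$ of $\rho$ starting at $\tilde{p}$ terminates at $\tau\tilde{p}$ and crosses a single lift $\tilde D$ of $D$, lying in the lift of $R$ that separates $V_0$ from $V_1$. The chosen isotopy lifts $\tau$-equivariantly to an isotopy of $\widetilde{M}$ which, in every $\tau^n N$, pushes the corresponding lift of $D$ across $\tau^n\tilde{p}$. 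Let $U_0$ be the lift of $M\setminus\phi_\rho(R)$ containing $\tilde{p}$.

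The central step is to verify that $U_0$ meets exactly two lifts of $M\setminus R$, namely $V_0$ and $V_1$. Outside $\pi^{-1}(N)$, the preimages of $R$ and of $\phi_\rho(R)$ agree, so $U_0$ there coincides with $V_1$. Inside the neighbourhood of $\tilde\rho$, pulling $\tilde D$ across $\tilde{p}$ carves out a thin finger of $V_0$ (containing $\tilde{p}$) that now lies on the $U_0$ side of the modified surface; inside the neighbourhood of $\tau\tilde\rho$, the symmetric push across $\tau\tilde{p}$ excises a thin finger of $V_1$ from $U_0$ but does not reach into $V_2$. All other $\tau$-translates of the local modification occur far from $U_0$. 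Hence $U_0\subseteq V_0\cup V_1\cup \pi^{-1}(R)$, with both $U_0\cap V_0$ and $U_0\cap V_1$ non-empty.

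The only lifts of $p$ contained in $U_0$ are $\tilde{p}$ and $\tau\tilde{p}$, so $U_0\setminus\pi^{-1}(p)$ is a lift of $M_p\setminus\phi_\rho(R)$ meeting precisely the two lifts $V_0\setminus\{\tilde{p}\}$ and $V_1\setminus\{\tau\tilde{p}\}$ of $M_p\setminus R$, establishing adjacency in $\ms_p(L)$ by Definition \ref{mspldefn}. The main obstacle is purely the careful bookkeeping of on which side of each pushed disc the successive lifts of $p$ end up; once that is tracked correctly the intersection count is immediate, and the remaining content is picture-level verification analogous to Figure \ref{pic1}.
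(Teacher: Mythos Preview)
Your direct lift computation is a legitimate strategy and genuinely different from the paper's argument. The paper instead takes a parallel copy $R'$ of $R$, pushed off in the direction that $\rho$ crosses $R$, and simply observes that $R$ and $\phi_\rho(R')$ are disjoint and tight: the tube of $R'$ created by $\phi_\rho$ follows only the part of $\rho$ \emph{after} the crossing with $R$, so it never meets $R$. Since $[\phi_\rho(R')]=[\phi_\rho(R)]$ in $\ms_p(L)$, this realises the adjacency with no explicit lift analysis at all.

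Your bookkeeping, however, is wrong in two places---precisely the part you flag as the main obstacle. First, under the paper's convention the ambient isotopy carries points \emph{forward} along $\rho$, so the lift $\tilde D$ sitting between $V_0$ and $V_1$ is pushed toward $\tau\tilde p$, not back across $\tilde p$; consequently the lift $U_0$ containing $\tilde p$ coincides with $V_{-1}$, not $V_1$, outside $\pi^{-1}(N)$, and it meets $V_{-1}$ and $V_0$. (Under the reversed convention the paper's own proof would break, since the tube of $R'$ would then be dragged back through $R$.) Second, any lift of $M\setminus S$ for a Seifert surface $S$ contains exactly \emph{one} preimage of $p$, so $U_0$ cannot contain both $\tilde p$ and $\tau\tilde p$; fortunately this claim is unnecessary, as adjacency depends only on which $V_n$ the region $U_0$ meets. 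With these corrections your argument does go through. You also omit the check that $[R]\ne[\phi_\rho(R)]$; the paper handles this in one line via Lemma~\ref{pathdisjointlemma}, since $\rho$ has algebraic intersection $\pm 1$ with $R$ and hence cannot be homotoped off it.
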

\begin{proof}
Since $\rho$ cannot be made disjoint from $R$, the two vertices are distinct.
Take a copy $R'$ of $R$. If $\rho$ crosses $R$ in the positive direction, push $R'$ off $R$ in the positive direction. If instead $\rho$ crosses $R$ in the negative direction, push $R'$ off $R$ in the negative direction. Now $R$ and $\phi_{\rho}(R')$ are tight.
\end{proof}

\begin{lemma}\label{meetoncelemma2}
Suppose $[R]$ and $[R']$ are adjacent in $\ms_p(L)$, and that $\inc_*(R)=\inc_*(R')$. 
Then $[R']=[\phi_{\rho}(R)]$ for a simple closed loop $\rho$ based at $p$ that meets $R$ once.
\end{lemma}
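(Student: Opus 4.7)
The plan is to construct $\rho$ directly from a standard form for the tight pair $(R,R')$, and then verify $[\phi_{\rho}(R)]=[R']$. First I put $R$ and $R'$ in a product region containing $p$. Replace $R,R'$ by tight representatives; by Corollary~\ref{isotopyrelbdycor2} they may be taken to be $\partial$--almost disjoint, and (component-by-component) disjoint. Since $\inc_*(R)=\inc_*(R')$ they are isotopic in $M$, so Theorem~\ref{productregionthm} together with Corollary~\ref{isotopyrelbdycor2} yields a product region $T\cong R\times I$ in $M$ with $\rho_T=\partial T$, $R=R\times\{0\}$, and $R'=R\times\{1\}$. If $p\notin T$ the product structure would provide an isotopy from $R$ to $R'$ in $M_p$, contradicting $[R]\neq[R']$ in $\ms_p(L)$; hence $p\in\Int(T)$, and I write $p=(q,t_0)$ with $t_0\in(0,1)$.

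Next I build $\rho$. Because $L$ is non-split, the Seifert surface $R$ is non-separating in $M$, so $M\setminus\Int(T)$ is path-connected and contains $R\cup R'$ on its boundary. Choose an embedded arc $\beta\subset M\setminus\Int(T)$ from a point $x_R\in\Int(R)$ to a point $x_{R'}\in\Int(R')$, meeting $R\cup R'$ only at its endpoints, and join $p$ to $x_R$ and $x_{R'}$ back to $p$ by embedded arcs in $\Int(T)$. After a generic perturbation, the concatenation is a simple closed curve $\rho$ based at $p$ meeting $R$ transversely at the single point $x_R$ (and meeting $R'$ transversely at $x_{R'}$, but this is irrelevant for $\phi_\rho$'s action on $R$). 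By Lemma~\ref{meetoncelemma1}, $[R]$ and $[\phi_\rho(R)]$ are adjacent in $\ms_p(L)$, and $\inc_*(\phi_\rho(R))=\inc_*(R)=\inc_*(R')$ because $\phi_\rho$ is isotopic to the identity in $M$.

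The main obstacle is the final identification $[\phi_\rho(R)]=[R']$. My plan is to lift to $\widetilde M_p$. Take the tight lift $\widetilde R'$ of $R'$ in the fundamental slab $V$ between consecutive lifts $\widetilde R$ and $\tau(\widetilde R)$ of $R$, and the preimage $\tilde p$ of $p$ in $V$ which, because $p\in T$, lies on the $\widetilde R$-side of $\widetilde R'$. Lifting the point-pushing isotopy defining $\phi_\rho$ from the identity yields a specific lift $\tilde\phi_\rho$; the local structure of point-pushing at the crossing $\rho\cap R$, together with the routing of $\rho$ through $T$ (and, if necessary, reversal of $\rho$), places $\tilde\phi_\rho(\widetilde R)$ inside the slab $V$. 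Since $\widetilde R'$ is the unique lift of $R'$ contained in $V$, this forces $\tilde\phi_\rho(\widetilde R)=\widetilde R'$ up to isotopy in $\widetilde M_p$, whence $[\phi_\rho(R)]=[R']$ in $\ms_p(L)$. Equivalently, I can apply Lemma~\ref{pathcoincidelemma}: comparing $\rho$ with the loop $\rho_0$ produced by Lemma~\ref{pathexistslemma}, the loop $\rho_0^{-1}\cdot\rho$ has zero algebraic intersection with $R$ and can be homotoped off $R$ through the product region, exhibiting it as a simple loop disjoint from $R$ as required.
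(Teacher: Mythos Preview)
Your construction of $\rho$ from the product region $T$ is close in spirit to the paper's, but the final identification $[\phi_\rho(R)]=[R']$ has a genuine gap. The lifting argument does not work as stated: knowing that $\tilde\phi_\rho(\widetilde R)$ and $\widetilde R'$ both lie in the slab $V$ between $\widetilde R$ and $\tau(\widetilde R)$ only gives an isotopy in $\widetilde M$, not in $\widetilde M_p$ --- there is a lift of $p$ inside $V$ that may well obstruct it, and ``unique lift of $R'$ in $V$'' is irrelevant since $\tilde\phi_\rho(\widetilde R)$ is a lift of $\phi_\rho(R)$, not of $R'$. Your alternative via Lemma~\ref{pathcoincidelemma} invokes that lemma in the wrong direction (it assumes the surfaces are already isotopic in $M_p$), and the claim that $\rho_0^{-1}\cdot\rho$ can be homotoped off $R$ is not justified: zero algebraic intersection with $R$ does not imply the loop lies in $\pi_1(M\setminus R,p)$, and Lemma~\ref{pathexistslemma} gives no control over $\rho_0$ beyond its existence.

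The paper avoids this difficulty by choosing $\beta$ to be a meridian arc: $\rho$ runs from $p$ through $T$ to near $\partial R_a$, once around a meridian of $L$ (crossing $R'_b$ before $R_a$), and back through $T$ to $p$. For this explicit loop the identification is geometrically immediate, since pushing $p$ around the meridian is exactly the ambient isotopy that slides $R_a$ across the product region $T$ to $R'_b$, visibly staying in $M_p$. (The paper is also more careful with disconnected $R$: only one pair of components can fail to coincide, since $p$ lies in at most one product region.) To repair your argument you should either take $\beta$ to be this meridian arc, or show directly that any two choices of $\beta$ in $M\setminus\Int(T)$ yield loops differing by a conjugate of an element of $\pi_1(M\setminus\Int(T))$ based near the crossing point, hence by a loop disjoint from $R$, so that by Remark~\ref{disjointpathremark} and Remark~\ref{actionremark} the resulting $[\phi_\rho(R)]$ is independent of this choice.
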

\begin{proof}
Isotope $R$ and $R'$ to be tight, making any components coincide where possible. 
Since $[R]$ and $[R']$ are distinct, there exists at least one component $R_a$ of $R$ that does not coincide with the corresponding component $R'_b$ of $R'$. 
As $R_a$ and $R'_b$ are isotopic in $M$, by Theorem \ref{productregionthm} there is a product region $T$ between them, which must contain $p$.
Thus there is exactly one such pair of components.

Let $\rho$ be a simple closed loop that begins at $p$, runs through $T$ to near $\partial R_a$, runs once around $L$ just above a meridian curve on $\partial\!\nhd(L)$ in the direction that takes it through $R'_b$ before $R_a$, and then returns to $p$ through $T$.
Then $\phi_{\rho}(R)$ is isotopic to $R'$ in $M_p$.
\end{proof}

\begin{corollary}\label{adjacencycor1}
Let $R$ be a taut Seifert surface for $L$, and $\rho$, $\rho'$ two simple closed curves based at $p$. 
Then $[\phi_{\rho}(R)]$ and $[\phi_{\rho'}(R)]$ are adjacent in $\ms_p(L)$ if and only if 
$\rho'\cdot\rho^{-1}$ can be homotoped to meet $R$ once.
\end{corollary}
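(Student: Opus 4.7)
The plan is to combine the preceding lemmas: Lemmas \ref{meetoncelemma1} and \ref{meetoncelemma2} (characterising adjacency in a single $\inc_*$-fibre via loops meeting $R$ once), Lemma \ref{homeolemma} (invariance of adjacency under the action of $\phi_{\rho}$), Remark \ref{actionremark} (the composition rule $[\phi_{\sigma}(\phi_{\rho}(R))]=[\phi_{\rho\cdot\sigma}(R)]$), and Lemma \ref{pathcoincidelemma} (recognising when two curves give the same surface). The strategy is to translate the adjacency of $[\phi_{\rho}(R)]$ and $[\phi_{\rho'}(R)]$ through $\phi_{\rho^{-1}}$, reducing to an adjacency involving $[R]$ itself.

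For the $(\Leftarrow)$ direction, I would suppose $\rho'\cdot\rho^{-1}$ can be homotoped rel basepoint to a simple closed curve $\mu$ meeting $R$ once. Lemma \ref{meetoncelemma1} then gives that $[R]$ and $[\phi_{\mu}(R)]$ are adjacent, and Lemma \ref{homeolemma} upgrades this to adjacency of $[\phi_{\rho}(R)]$ and $[\phi_{\rho}(\phi_{\mu}(R))]$. By Remark \ref{actionremark} the second of these equals $[\phi_{\mu\cdot\rho}(R)]$, and since $\mu\cdot\rho$ is homotopic rel basepoint to $\rho'\cdot\rho^{-1}\cdot\rho$ and hence to $\rho'$, Lemma \ref{homotopelooplemma} identifies it with $[\phi_{\rho'}(R)]$.

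For the $(\Rightarrow)$ direction, I would suppose $[\phi_{\rho}(R)]$ and $[\phi_{\rho'}(R)]$ are adjacent and apply Lemma \ref{homeolemma} to $\phi_{\rho^{-1}}$. By Remark \ref{actionremark} the resulting pair of vertices is $[\phi_{\rho\cdot\rho^{-1}}(R)]$ and $[\phi_{\rho'\cdot\rho^{-1}}(R)]$; since $\rho\cdot\rho^{-1}$ is null-homotopic, Lemma \ref{homotopelooplemma} together with Remark \ref{disjointpathremark} reduces the first of these to $[R]$. Since $\inc_*$ collapses both vertices to $[R]$, Lemma \ref{meetoncelemma2} supplies a simple closed loop $\mu$ based at $p$ meeting $R$ once with $[\phi_{\mu}(R)]=[\phi_{\rho'\cdot\rho^{-1}}(R)]$. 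After replacing $\rho'\cdot\rho^{-1}$ by a simple closed curve in its rel-basepoint homotopy class (using the endpoint-splitting plus general position trick from the proof of Lemma \ref{homotopelooplemma}), Lemma \ref{pathcoincidelemma} produces a simple closed curve $\tau$ based at $p$ and disjoint from $R$ with $[\rho'\cdot\rho^{-1}]=[\tau\cdot\mu]$. This loop meets $R$ in exactly the single point at which $\mu$ does.

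The argument is essentially bookkeeping built from the established machinery. The only points to watch are the composition convention in Remark \ref{actionremark} (with $\rho$ on the \emph{left} in $\rho\cdot\sigma$, even though $\phi_{\rho}$ is applied first), and the step of replacing the concatenation $\rho'\cdot\rho^{-1}$ by a genuine simple closed curve before invoking Lemma \ref{pathcoincidelemma}; neither of these should cause real difficulty.
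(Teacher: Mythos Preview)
Your proof is correct and follows essentially the same route as the paper: reduce via Lemma \ref{homeolemma} and Remark \ref{actionremark} to the adjacency of $[R]$ and $[\phi_{\rho'\cdot\rho^{-1}}(R)]$, then apply Lemma \ref{meetoncelemma1} for one direction and Lemma \ref{meetoncelemma2} together with Lemma \ref{pathcoincidelemma} for the other. The paper's version is terser---it does not spell out the replacement of $\rho'\cdot\rho^{-1}$ by a simple closed curve or the composition bookkeeping---but the content is the same.
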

\begin{proof}
By Lemma \ref{homeolemma}, $[\phi_{\rho}(R)]$ and $[\phi_{\rho'}(R)]$ are adjacent if and only if $[R]$ and $[\phi_{\rho'\cdot\rho^{-1}}(R)]$ are adjacent. 

Suppose this is the case. Then, by Lemma \ref{meetoncelemma2}, there exists a simple closed curve $\rho''$ based at $p$ that meets $R$ once such that $[\phi_{\rho'\cdot\rho^{-1}}(R)]=[\phi_{\rho''}(R)]$. Lemma \ref{pathcoincidelemma} gives a simple closed curve $\sigma$ based at $p$ and disjoint from $R$ such that $[\rho'']=[\sigma\cdot\rho'\cdot\rho^{-1}]$. Because $\sigma$ is disjoint from $R$, this gives the required result.

Conversely, suppose $\rho'\cdot\rho^{-1}$ has been homotoped to meet $R$ once. By Lemma \ref{meetoncelemma1}, $[R]$ and $[\phi_{\rho'\cdot\rho^{-1}}(R)]$ are adjacent. 
\end{proof}

We now find a copy of $\ms(L)$ inside $\ms_p(L)$.

\begin{definition}[\cite{MR2869183} Section 5]\label{orderingdefn}
Given a taut Seifert surface $S$ for $L$, define a relation $<_S$ on $\V(\ms(L))$ as follows.

Let $R,R'$ be taut Seifert surfaces for $L$ such that $[R]$ and $[R']$ are adjacent in $\ms(L)$.
Isotope the surfaces so that $R,R'$ are almost transverse to and have simplified intersection with $S$, and so that $R,R'$ are almost disjoint with simplified intersection (see Definition \ref{almostdefn}).

Let $V_S$ be a lift of $M \setminus S$.
Let $V_R$ be the lift of $M \setminus R$ such that $V_R \cap V_S\neq\emptyset$ but $V_R \cap\tau(V_S) = \emptyset$. 
Finally, let $V_{R'}$ be the lift of $M \setminus R'$ such that $V_{R'} \cap V_R \neq\emptyset$ but $V_{R'} \cap \tau(V_R) = \emptyset$.
Say $[R'] <_S [R]$ if $V_{R'} \cap V_S\neq\emptyset$.
\end{definition}

It is shown in \cite{MR2869183} that $<_S$ is well defined, that any two adjacent vertices of $\ms(L)$ are comparable, and that there are no $R_1,\ldots,R_k$, for $k\geq 2$, with $R_1 <_S R_2 <_S\cdots<_S R_k <_S R_1$.

\begin{definition}
Fix a taut Seifert surface $R_p$ for $L$.
Define a relation $\leq$ on $V(\ms(L))$ by $[R]\leq[R']$ if either $[R]=[R']$ or $[R]<_{R_p}[R']$.
\end{definition}

\begin{definition}[\cite{2011arXiv1109.0965B} Definition 6.1]
Let $R,R'$ be $\partial$--almost disjoint taut Seifert surfaces for $L$. Pick a component $K$ of $L$, and consider $R,R'$ near $K$. It may be that the components that meet $K$ coincide. If not, one of the two surfaces lies `above' the other, where this is measured in the positive direction around $K$. Write $R\leq_{K} R'$ if either $R'$ lies above $R$ or the two coincide.
\end{definition}

\begin{proposition}[\cite{2011arXiv1109.0965B} Proposition 6.4]\label{representativesprop}
It is possible to choose a representative $R^*$ (considered as fixed in $M$) for each vertex $[R]$ of $\ms(L)$ such that the following conditions hold for any pair $([R_1],[R_2])$ of adjacent vertices of $\ms(L)$.
\begin{itemize}
	\item $\partial R_1^*=\partial R_2^*=\partial R_p$.
	\item There is a representative $R'_i$ of $[R_i]$ that is isotopic to $R_i^*$ by an isotopy fixing the boundary for $i=1,2$ such that $R'_1$ and $R'_2$ are $\partial$--almost disjoint and tight.
	\item $[R_1]\leq[R_2]$ if and only if $R'_1\leq_{K}R'_2$ for every component $K$ of $L$.
\end{itemize}
\end{proposition}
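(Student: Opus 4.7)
The plan is to build the family $\{R^*\}$ in three stages, one per bullet.

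\textbf{Stage 1 (uniform boundary).} For each vertex $[R]$ of $\ms(L)$ pick a representative $R^*$ with $\partial R^* = \partial R_p$. This is possible because any Seifert surface for $L$ meets $\partial\nhd(L)$ in a longitude whose homology class on $\partial\nhd(L)$ is determined by the Seifert framing; thus all such longitudes are isotopic on $\partial\nhd(L)$, and such a boundary isotopy extends to an ambient isotopy in $M$.

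\textbf{Stage 2 (tight $\partial$-almost disjoint representatives).} Given adjacent $[R_1]$ and $[R_2]$, start with any tight pair realising the adjacency and apply a boundary isotopy so their boundaries match $\partial R_p$. The resulting copies $R'_1, R'_2$ of $R^*_1, R^*_2$ may fail to be $\partial$-almost disjoint, but Corollary \ref{isotopyrelbdycor2} (applied to intersections one at a time and combined with a minimisation argument for $|R'_1 \cap R'_2|$) supplies boundary-preserving isotopies that achieve $\partial$-almost disjointness while preserving tightness.

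\textbf{Stage 3 (order compatibility, the main step).} We must show $[R_1] \leq [R_2]$ iff $R'_1 \leq_K R'_2$ for every component $K$ of $L$. Fix the lift $V_{R_p}$ and let $V_{R'_1}, V_{R'_2}$ be the lifts as in Definition \ref{orderingdefn}. Near any component $K$, the $\partial$-almost disjoint surfaces $R'_1, R'_2$ restrict to collections of annuli which, because they share boundary $\partial R_p$ on $\partial\nhd(K)$, lift to horizontal sheets in the preimage of a tubular neighbourhood of $K$. The covering transformation $\tau$ translates these sheets by one unit of height, because $\lk$ sends the meridian of every component to $+1$. Hence the local stacking near $K$ (which records $\leq_K$) is identified with the relative vertical positions of $V_{R'_1}, V_{R'_2}$ with respect to $V_{R_p}$ (which records $\leq$), and these two orders coincide at $K$.

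\textbf{Main obstacle.} The difficulty is consistency across the components of $L$: the local orders $\leq_{K}$ at different $K$ must all agree with the single global order $\leq$. This works only because $\lk$ is a single homomorphism sending every meridian to $+1$, so "positive direction around $K$" means "one sheet up in $\widetilde M$" independently of $K$. The uniform boundary condition of Stage 1 is essential here: without it, the boundary-preserving isotopies near different components could introduce integer shifts relative to $\partial R_p$ that would desynchronise the local pictures at different components and break the equivalence with the global order.
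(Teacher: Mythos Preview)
The paper does not prove this proposition; it is quoted from \cite{2011arXiv1109.0965B} and stated without argument. So there is no paper-side proof to compare against, and I can only assess your proposal on its own merits.

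There is a genuine gap. Your Stage~1 picks, for each $[R]$, an \emph{arbitrary} representative $R^*$ with $\partial R^*=\partial R_p$, and Stage~3 then attempts to prove the third bullet for that arbitrary choice. But the third bullet is not automatic: it depends on which rel-boundary isotopy class you chose in Stage~1. When a taut Seifert surface is disconnected, two representatives of $[R]$ sharing the boundary $\partial R_p$ need not be isotopic rel boundary---one can push a single component of $R$ once around a meridian of the sublink it bounds, returning its boundary to $\partial R_p$ while shifting that component one level in $\widetilde{M}$ and leaving the other components fixed. With such a representative for $[R_2]$ and the unmodified one for $[R_1]$, the local orders $\leq_K$ at the different components of $L$ will disagree, so the biconditional in the third bullet fails for that pair. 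Your ``Main obstacle'' paragraph names exactly this danger but does not dispose of it: the uniform-boundary condition of Stage~1 only pins down the boundary curve, not the rel-boundary isotopy class of each component of $R^*$, and it is the latter that controls the heights near each $K$.

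A second, related problem: your Stage~3 asserts that the local stacking of $R'_1,R'_2$ near $K$ is identified with the relative positions of $V_{R'_1},V_{R'_2}$ \emph{with respect to} $V_{R_p}$. But you have not positioned $R'_1,R'_2$ relative to $R_p$ at all---they are only positioned relative to each other---so the lifts $V_{R'_i}$ of Definition~\ref{orderingdefn} (which are selected by intersection conditions involving $V_{R_p}$) have no a priori relation to the rel-boundary class of $R'_i$ that governs $\leq_K$. The actual content of the cited result is a coherent construction of the family $\{R^*\}$ tied to $R_p$ (for instance by fixing, for each $[R]$, a specific lift in $\widetilde{M}$ relative to $V_{R_p}$), and that construction is what you have not supplied.
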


Let $\mathcal{R}$ be this set of representatives. 
We may position $p$ to be disjoint from every $R\in\mathcal{R}$, and such that the isotopies given can be performed in the complement of $p$. To see this, note that all the representatives coincide on $\partial M=\partial\! \nhd(L)$, and that the isotopies also fix the boundary. We can therefore find a suitable position for $p$ close to the boundary. That is, we take $p$ to lie in a component of $M\setminus(\bigcup_{R\in\mathcal{R}} R)$ that meets $\partial M$ away from $\partial R_p^*$.
We now consider these surfaces to all lie in $M_p$.
They all represent distinct vertices of $\ms_p(L)$, since their images under $\inc_*$ are distinct.

Suppose $\inc_*(R)$ and $\inc_*(R')$ are adjacent in $\ms(L)$ for some $R,R'\in\mathcal{R}$. 
Then they can be isotoped in $M_p$ to be tight in $M$.
This shows that they are also adjacent in $\ms_p(L)$.
Hence the subcomplex of $\ms_p(L)$ induced by $\mathcal{R}$ is an embedded copy of $\ms(L)$.

\begin{definition}
For each taut Seifert surface $R$ in $M_p$, denote by $R^*$ the unique $R'\in\mathcal{R}$ such that $\inc_*(R)=\inc_*(R')$.
\end{definition}

Let $R$ be an incompressible Seifert surface for $L$. Choose a product neighbourhood $R\times[0,1]$ for $R$. Then the maps $\inc_0\colon\pi_1(R\times\{0\})\to\pi_1(M\setminus R)$ and $\inc_1\colon\pi_1(R\times\{1\})\to\pi_1(M\setminus R)$ induced by inclusion are injective.
Now $\pi_1(M)$ is given by the HNN extension 
$\langle\pi_1(M\setminus R),t\mid t^{-1}\cdot\inc_0([\rho])\cdot t=\inc_1([\rho])\forall[\rho]\in\pi_1(R)\rangle$ (see \cite{MR0577064} page 180).
The following result therefore shows that the inclusion $M\setminus R\hookrightarrow M$ induces an injection $\pi_1(M\setminus R,p)\hookrightarrow\pi_1(M,p)$.

\begin{theorem}[\cite{MR0577064}  Chapter IV Theorem 2.1]
Let $G$ be a group, and $H_1, H_2\leq G$. Let $\phi\colon H_1\to H_2$ be an isomorphism. 
Then the map $g\mapsto g$ embeds 
$G$ into $\langle G, t\mid t^{-1}ht=\phi(h)\forall h\in H_1\rangle$.
\end{theorem}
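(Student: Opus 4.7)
The plan is to prove the normal form theorem for HNN extensions (``Britton's Lemma'') and deduce the embedding from it. Write $G^{*}=\langle G,t \mid t^{-1}ht=\phi(h)\ \forall h\in H_1\rangle$, and let $\iota\colon G\to G^{*}$ denote $g\mapsto g$. Every element of $G^{*}$ is represented by a word $w=g_0 t^{\epsilon_1} g_1 \cdots t^{\epsilon_n} g_n$ with $g_i\in G$ and $\epsilon_i\in\{\pm 1\}$. The key claim is that $w$ represents the identity only if either $n=0$ with $g_0=1$, or $w$ contains a \emph{pinch}: a subword $t^{-1}ht$ with $h\in H_1$ or $tht^{-1}$ with $h\in H_2$. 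Granting this, if $g\in G$ satisfies $\iota(g)=1$, then the length-$0$ word $g$ is tautologically pinch-free, so $g=1$ in $G$, proving injectivity.

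To prove the claim, I would fix right transversals $T_1$, $T_2$ of $H_1$, $H_2$ in $G$ (each containing $1$) and show that every element of $G^{*}$ admits a \emph{unique} normal form
\[
g\cdot t^{\epsilon_1} s_1\cdot t^{\epsilon_2} s_2\cdots t^{\epsilon_n} s_n,
\]
with $g\in G$, $s_i\in T_2$ if $\epsilon_i=-1$, and $s_i\in T_1$ if $\epsilon_i=+1$, subject to the condition that $s_i\neq 1$ whenever $\epsilon_i$ and $\epsilon_{i+1}$ have opposite signs. Existence is routine: starting from an arbitrary word one repeatedly replaces $t^{-1}ht$ (respectively $tht^{-1}$) with $\phi(h)$ (respectively $\phi^{-1}(h)$) and pushes each $g_i=h_i s_i$ through the adjacent $t^{\pm 1}$ until no further reduction is possible.

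Uniqueness is the main obstacle. The cleanest combinatorial route I know is to construct an action of $G^{*}$ on the set $N$ of formal normal-form sequences: each $g\in G$ acts by left multiplication (with a re-balancing of the leading $H_i$-factor), while $t^{\pm 1}$ acts by prepending, using the transversal decomposition to re-normalise should a pinch appear. One then verifies that the HNN relators are respected, which gives a homomorphism $G^{*}\to\mathrm{Sym}(N)$; because distinct normal-form words send the empty sequence to distinct elements of $N$, this homomorphism separates normal forms, yielding uniqueness. An attractive alternative is Bass--Serre theory: $G^{*}$ is the fundamental group of the graph of groups with a single vertex group $G$ and a single loop edge identifying $H_1$ with $H_2$ via $\phi$, so it acts on its Bass--Serre tree with vertex stabiliser $G$, and hence $G$ embeds by construction.
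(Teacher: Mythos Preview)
The paper does not prove this theorem at all; it is quoted verbatim from Lyndon--Schupp and used as a black box. Your sketch via Britton's Lemma and the normal-form action on $N$ is precisely the argument in the cited reference, so in that sense you have reproduced the intended proof.

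One small slip worth flagging: with the relation $t^{-1}ht=\phi(h)$ for $h\in H_1$ you have $t^{-1}(hs)=\phi(h)\,t^{-1}s$ (for $h\in H_1$) and $t(h's)=\phi^{-1}(h')\,ts$ (for $h'\in H_2$), so in the normal form the letter $s_i$ following $t^{-1}$ should lie in the transversal $T_1$ of $H_1$, and the letter following $t$ in $T_2$; you have these swapped. This does not affect the structure of the argument.
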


\begin{definition}
For $R\in\mathcal{R}$, let $X_{R}=\pi_1(M,p)\mathclose{}/\mathopen\sim_R$ where $[\sigma_1]\sim_R[\sigma_2]$ if and only if there exists $\sigma_3\in\pi(M\setminus R,p)$ such that $[\sigma_1]=[\sigma_3\cdot\sigma_2]$.
In addition, let $X=\{(R,[\rho]):R\in\mathcal{R}\textrm{, }[\rho]\in X_{R}\}$.
\end{definition}

\begin{corollary}
There is a bijection $\phi\colon X\to\V(\ms(L))$, given by $\phi(R,[\rho])= [\phi_{\rho}(R)]$.
\end{corollary}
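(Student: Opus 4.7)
The plan is to verify in turn that $\phi$ is well defined, surjective, and injective, with all three steps essentially assembled from lemmas established earlier in the section.

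For \emph{well-definedness}, suppose $[\rho_1]\sim_R[\rho_2]$, so $[\rho_1]=[\sigma\cdot\rho_2]$ for some $\sigma\in\pi_1(M\setminus R,p)$. I would represent $\sigma$ by a simple closed curve based at $p$ that is disjoint from $R$ (this is possible since the homotopy class in $\pi_1(M,p)$ has a representative disjoint from $R$, which can be made simple via the crossing-change argument in Lemma \ref{homotopelooplemma}). Then Remark \ref{disjointpathremark} gives $\phi_\sigma(R)=R$, and Remark \ref{actionremark} (combined with Lemma \ref{homotopelooplemma}, which lets us replace $\rho_1$ by the homotopic loop $\sigma\cdot\rho_2$) gives
\[
[\phi_{\rho_1}(R)]=[\phi_{\sigma\cdot\rho_2}(R)]=[\phi_{\rho_2}(\phi_\sigma(R))]=[\phi_{\rho_2}(R)].
\]

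For \emph{surjectivity}, let $[R']$ be a vertex of $\ms_p(L)$. Set $R=(R')^*\in\mathcal{R}$, so by construction $\inc_*(R)=\inc_*(R')$. Lemma \ref{pathexistslemma} then supplies a simple closed curve $\rho$ based at $p$ with $[R']=[\phi_\rho(R)]$, giving $\phi(R,[\rho])=[R']$.

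For \emph{injectivity}, suppose $\phi(R_1,[\rho_1])=\phi(R_2,[\rho_2])$, i.e.\ $[\phi_{\rho_1}(R_1)]=[\phi_{\rho_2}(R_2)]$ in $\ms_p(L)$. Applying $\inc_*$ and noting that $\phi_\rho$ is by construction an isotopy in $M$ (it only isotopes the marked point $p$), I would first conclude $\inc_*(R_1)=\inc_*(R_2)$, and hence $R_1=R_2$ since $\mathcal{R}$ contains exactly one representative of each vertex of $\ms(L)$. Writing $R=R_1=R_2$, Lemma \ref{pathcoincidelemma} applied to the isotopic surfaces $\phi_{\rho_1}(R)$ and $\phi_{\rho_2}(R)$ produces a simple closed curve $\sigma$ based at $p$ and disjoint from $R$ with $[\rho_2]=[\sigma\cdot\rho_1]$. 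Since $\sigma\in\pi_1(M\setminus R,p)$, this means $[\rho_1]\sim_R[\rho_2]$, so $(R_1,[\rho_1])=(R_2,[\rho_2])$ in $X$.

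The one subtle point worth double-checking is the well-definedness step, specifically the interplay between the abstract element $\sigma\in\pi_1(M\setminus R,p)$ and the simple-curve hypothesis built into Definition \ref{homeodefn}; once one observes (as above) that Lemma \ref{homotopelooplemma} lets us freely choose a simple representative in a homotopy class, everything assembles immediately from the results already proved. The surjectivity and injectivity steps are then essentially direct translations of Lemmas \ref{pathexistslemma} and \ref{pathcoincidelemma} respectively into the language of the set $X$.
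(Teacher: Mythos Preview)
Your proof is correct and follows essentially the same route as the paper's: well-definedness via Remark~\ref{disjointpathremark} and the action from Remark~\ref{actionremark}, surjectivity via Lemma~\ref{pathexistslemma}, and injectivity via the $(\cdot)^*$ operation together with Lemma~\ref{pathcoincidelemma}. Your added remark about choosing a \emph{simple} representative of $\sigma$ (using the crossing-change trick from Lemma~\ref{homotopelooplemma}) is a small point the paper leaves implicit, so in that respect your write-up is slightly more careful.
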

\begin{proof}
Let $R\in\mathcal{R}$, and let $\rho$, $\rho'$ be simple closed curves based at $p$ such that $[\rho]\sim_R[\rho']$. Then there exists a simple closed curve $\sigma$ based at $p$ and disjoint from $R$ such that $[\rho]=[\sigma\cdot\rho']$.
By Remark \ref{disjointpathremark}, $[\phi_{\sigma}(R)]=[R]$, so $[\phi_{\rho}(R)]=[\phi_{\sigma\cdot\rho'}(R)]=[\phi_{\rho'}(R)]$.
Thus $\phi$ is well-defined.

Now let $R,R'\in\mathcal{R}$ and let $\rho$, $\rho'$ be simple closed curves based at $p$, such that $[\phi_{\rho}(R)]=[\phi_{\rho'}(R')]$.
Then $R=\phi_{\rho}(R)^*=\phi_{\rho'}(R')^*=R'$.
Given this, Lemma \ref{pathcoincidelemma} applies, showing that $[\rho]\sim_R[\rho']$.
Therefore $\phi$ is injective.

Finally, let $R'$ be a taut Seifert surface for $L$ in $M_p$. Let $R=(R')^*$. Then $\inc_*(R)=\inc_*(R')$.
By Lemma \ref{pathexistslemma} there exists a simple closed path $\rho$ based at $p$ such that $[\phi_{\rho}(R)]=[R']$. Hence $\phi$ is surjective.
\end{proof}

\begin{remark}
Let $R\in\mathcal{R}$. Then any element of $\pi_1(M\setminus R,p)$ has linking number 0 with $L$. Therefore the map $\lk\colon\pi_1(M,p)\to\mathbb{Z}$ descends to a well-defined surjection from $X_R$ to $\mathbb{Z}$.
\end{remark}

\begin{lemma}[\cite{2011arXiv1109.0965B} Lemma 4.1.9]\label{productregionsidelemma}
Let $R$, $R'$ be taut Seifert surfaces for $L$ such that $[R]$ and $[R']$ are adjacent in $\ms(L)$. Then $R$, $R'$ can be isotoped so they are disjoint and tight.
Suppose there are components $R_a$ of $R$ and $R'_b$ of $R'$ that can be made to coincide, so there is a product region between these components. The side of $R$ on which this product region lies is determined by the choice of $([R],[R'])$.
\end{lemma}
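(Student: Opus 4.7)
The plan is to establish the two assertions separately, first making the surfaces disjoint while preserving tightness, and then using the infinite cyclic cover to determine the side of any coincidence product region.

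For the disjointness claim, I would start with tight representatives $R,R'$ realising the adjacency, so that a lift $V_{R'}$ of $M\setminus R'$ meets exactly two lifts $V_R$ and $\tau(V_R)$ of $M\setminus R$. If $(R\cap R')\setminus\partial R\neq\emptyset$, the idea is to work component by component: for any component of $R$ meeting a component of $R'$ non-trivially, Theorem \ref{productregionthm} (applied after isotoping to $\partial$--almost disjoint position via Corollary \ref{isotopyrelbdycor2}) supplies a product region that can be used to push the components apart. The key check is that such a push does not destroy tightness, since the isotopy is supported in a region whose lift is a product region in $\widetilde{M}$ and so does not change which lifts of $M\setminus R$ meet $V_{R'}$.

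For the side claim, suppose $R_a\subseteq R$ and $R'_b\subseteq R'$ can be made to coincide, so after the isotopy above they cobound a product region $P\subset M$. I would lift to $\widetilde{M}$: each lift of $R'_b$ has the form $\tau^n\widetilde{R}'_b$, and among these exactly one, say $\widetilde{R}'_b$ itself, appears in $\partial V_{R'}$ separating $V_R$ from $\tau(V_R)$. It is then parallel (across a lifted copy of $P$) to a unique lift $\widetilde{R}_a\subseteq\partial V_R$. The orientation of $R_a$ combined with the direction of the $\tau$-action picks out a preferred ``positive'' side of $\widetilde{R}_a$, and by construction $\widetilde{R}'_b$ lies on that side; projecting down, $P$ lies on the positive side of $R_a$. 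Swapping the ordered pair to $([R'],[R])$ interchanges the roles of $V_R$ and $V_{R'}$ (now $V_R$ meets $V_{R'}$ and $\tau(V_{R'})$), which interchanges the roles of $\widetilde{R}_a$ and $\widetilde{R}'_b$ and so yields the opposite side.

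The main obstacle is verifying that the assigned side depends only on the ordered vertex pair $([R],[R'])$, not on the particular tight disjoint representatives chosen. Different such representatives are related by an ambient isotopy that respects the lift combinatorics (essentially the uniqueness statement of Theorem \ref{uniquepositionthm}, or its earlier analogue in the non-split setting), so the lift $\widetilde{R}_a$ is determined up to the action of $\tau$ on $\widetilde{M}$. Since $\tau$ commutes with the orientation-induced notion of positive side, the side downstairs is invariant, and the second claim follows.
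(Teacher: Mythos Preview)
The paper does not give its own proof of this lemma: it is stated with a citation to \cite{2011arXiv1109.0965B} Lemma 4.1.9 and no proof environment follows. So there is nothing in the present paper to compare your argument against directly.

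That said, there is a genuine gap in your proposal as it stands within this paper's logical structure. For the well-definedness of the side, you appeal to ``the uniqueness statement of Theorem \ref{uniquepositionthm}''. But Theorem \ref{uniquepositionthm} is proved in the appendix via Proposition \ref{alltightprop}, and the very first step of the induction in Proposition \ref{alltightprop} reads: ``By Lemma \ref{productregionsidelemma}, $R_{m+1}$ can be made disjoint from and tight relative to $R_0$.'' So you are invoking a result that depends on the lemma you are trying to prove. The parenthetical ``or its earlier analogue in the non-split setting'' does not help unless you can point to an independent source for that uniqueness; in the cited paper the lemma in question \emph{is} that earlier analogue.

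A non-circular route, closer in spirit to how the lemma is actually used in Proposition \ref{alltightprop}, is to argue directly from the characterisation of tightness in Lemma \ref{arcintersectionlemma}. If $R_a$ and $R'_b$ cobound a product region $P$ and $R_{a'}$ is a component of $R$ not parallel to any component of $R'$, take a path $\rho$ from just above a point of $R_{a'}$ to just below a point of $R_a$, disjoint from $R$. Tightness forces the algebraic intersection of $\rho$ with $R'$ to equal $1$, and by analysing the last crossing of $\rho$ with $R'$ one sees that the side of $R_a$ on which $P$ sits is forced. This is exactly the style of argument that appears later in the proof of Proposition \ref{alltightprop} when checking that parallel components are already ``correctly positioned'', and it avoids any appeal to the appendix results.
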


\begin{lemma}\label{isotopeincomplementlemma}
Let $R_+$ be a union of disjoint taut Seifert surfaces for $L$. Let $R'_a, R'_b$ be copies of the same component of a taut Seifert surface $R'$ for $L$. Suppose that $R'_a$ and $R'_b$ are disjoint from $R_+$.
Further assume either that $R'_a$ is not parallel to any component of $R_+$ or that both some copy $R'_A$ of $R'$ containing $R'_a$ and some copy $R'_B$ of $R'$ containing $R'_b$ are tight relative to each of the Seifert surfaces that make up $R_+$. 
Then $R'_a$ is isotopic to $R'_b$ in the complement of $R_+$.

In addition, if $R'_a$ and $R'_b$ are disjoint then there is a product region between them that is disjoint from $R_+$.
\end{lemma}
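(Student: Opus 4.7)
The plan is to find a product region $T$ between $R'_a$ and $R'_b$ in $M$ that is disjoint from $R_+$; its product structure then supplies the required ambient isotopy of $R'_a$ to $R'_b$ through the complement of $R_+$, and in the case where $R'_a \cap R'_b = \emptyset$ it simultaneously yields the product region demanded by the addendum.

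Since $R'_a$ and $R'_b$ are copies of the same connected component of the taut (hence incompressible) surface $R'$, they are ambient isotopic in $M$. Theorem \ref{productregionthm} thus produces a product region $T_0 \subset M$ whose horizontal boundary is (a copy of) $R'_a$ and $R'_b$. After a small isotopy keeping $R_+$ disjoint from $R'_a \cup R'_b$ and pushing $\partial R_+$ off the vertical boundary of $T_0$ in $\partial M$, each connected component of $R_+$ lies either entirely inside $T_0$ or entirely outside $T_0$: it cannot cross the horizontal boundary by the disjointness hypothesis, and after the isotopy it does not meet the vertical boundary. Let $R_{+,c}$ be a component of $R_+$ lying inside $T_0$. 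Since $R_{+,c}$ is incompressible (as a component of a taut Seifert surface), Proposition \ref{surfaceinproductprop} (with $S_0 = R'_a$, $S_1 = R'_b$, $S' = R_{+,c}$) gives that $R_{+,c}$ is parallel, via a product subregion of $T_0$, to a subsurface $B \subseteq R'_a$. The product structure forces $\partial B \subseteq \partial M$, so the connectedness of $R'_a$ together with $\partial R'_a \subseteq \partial M$ give $B = R'_a$, whence $R_{+,c}$ is parallel to $R'_a$ in $M$.

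Under the first hypothesis of the lemma, $R'_a$ is not parallel to any component of $R_+$, so no such $R_{+,c}$ exists and $T_0 \cap R_+ = \emptyset$, immediately giving the conclusion (and, when $R'_a \cap R'_b = \emptyset$, the addendum). Under the second hypothesis, parallel components of $R_+$ may legitimately lie in $T_0$, and we use the tightness of $R'_A$ and $R'_B$ with each Seifert surface $R_{+,i}$ of $R_+$ to choose a product region on the correct side. By Lemma \ref{productregionsidelemma}, tightness pins down the side of $R_{+,i}$ on which the product region to $R'_A$ (respectively, to $R'_B$) lies; this side is determined by the ordered pair $([R_{+,i}],[R'_A])$ in $\ms(L)$. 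Since $R'_A$ and $R'_B$ represent the same vertex, these sides coincide, so $R'_a$ and $R'_b$ lie on the same side of each component of $R_+$ parallel to them. We may therefore choose the product region $T$ between $R'_a$ and $R'_b$ to lie on this common side relative to every such component, obtaining $T$ disjoint from $R_+$.

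The main obstacle is this second case: without the tightness hypothesis one could picture parallel components of $R_+$ interleaved between $R'_a$ and $R'_b$ so that every product region cobounding them meets $R_+$. The role of Lemma \ref{productregionsidelemma}, together with the fact that $R'_A$ and $R'_B$ represent a single vertex of $\ms(L)$, is precisely to exclude this interleaved configuration and to force a consistent side on which the desired product region can be chosen.
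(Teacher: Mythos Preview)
Your approach is close to the paper's but has two genuine gaps.

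\textbf{The intersecting case.} You invoke Theorem~\ref{productregionthm} and assert that the resulting product region $T_0$ has horizontal boundary equal to \emph{all} of $R'_a$ and $R'_b$. That is only guaranteed when $R'_a\cap R'_b=\emptyset$; in general the horizontal boundary is a subsurface of each, and sweeping across $T_0$ only reduces $|R'_a\cap R'_b|$ rather than carrying $R'_a$ onto $R'_b$. The paper handles this by first isotoping $R'_a$ in the complement of $R_+$ to minimise $|R'_a\cap R'_b|$, and then arguing that any product region meeting a curve of $R'_a\cap R'_b$ must miss $R_+$ (otherwise the minimisation could be improved), so in fact the minimum is zero. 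Without this reduction your argument only covers the addendum.

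\textbf{The second hypothesis.} Once you have established, via Lemma~\ref{productregionsidelemma}, that the product region between a parallel component $R_{+,c}$ and $R'_a$ lies on the same side of $R_{+,c}$ as that between $R_{+,c}$ and $R'_b$, you are already done by contradiction: if $R_{+,c}\subset T_0$ then, by Proposition~\ref{surfaceinproductprop}, it splits $T_0$ into product subregions $T_a$ (between $R'_a$ and $R_{+,c}$) and $T_b$ (between $R_{+,c}$ and $R'_b$) lying on \emph{opposite} sides of $R_{+,c}$. This contradicts Lemma~\ref{productregionsidelemma}, so no component of $R_+$ can lie in $T_0$ and $T_0\cap R_+=\emptyset$. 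Your sentence ``We may therefore choose the product region $T$ \ldots\ to lie on this common side'' instead asserts the existence of some new product region without justification; there is no mechanism provided for producing it. The paper's argument is exactly the contradiction above, and your own final paragraph (``exclude this interleaved configuration'') suggests you had it in mind --- the fix is simply to draw the contradiction rather than to ``choose''.
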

\begin{proof}
Isotope $R'_a$ in the complement of $R_+$ to minimise its intersection with $R'_b$.
Since they are isotopic in $M$, by Theorem \ref{productregionthm} there is a product region $T$ between them.
If $T$ meets a curve of $R'_a\cap R'_b$ then $T\cap R_+=\emptyset$. This means that $T$ can be used to find an isotopy of $R'_a$ in the complement of $R_+$ that reduces $|R'_a\cap R'_b|$, which we have assumed cannot be done.
Hence $R'_a$ and $R'_b$ are disjoint.
If $T\cap R_+=\emptyset$ then $T$ gives an isotopy of $R'_a$ in the complement of $R_+$ that makes it coincide with $R'_b$.

Suppose instead that $T\cap R_+\neq\emptyset$. 
Let $R_c$ be a component of $R_+$ that meets $T$, and let $R_C$ be the Seifert surface in $R_+$ that contains $R_c$.
By Proposition \ref{surfaceinproductprop}, $R_c$ divides $T$ into two product regions $T_a$ and $T_b$, where $T_a$ lies between $R'_a$ and $R_c$ while $T_b$ lies between $R_c$ and $R'_b$.
Under the hypothesis that $R'_a$ is not parallel to any component of $R_C$, this cannot occur.
On the other hand, if both $R'_A$ and $R'_B$ are tight relative to $R_C$, Lemma \ref{productregionsidelemma} applies to the pair of vertices $([R_C],[R'])$, again giving a contradiction.
\end{proof}

\begin{proposition}\label{adjacenttopathsprop}
Let $R, R'\in\mathcal{R}$ such that $\inc_*(R)$ and $\inc_*(R')$ are adjacent in $\ms(L)$. Take copies $R_0$ of $R$ and $R'_0$ of $R'$, and isotope them in $M_p$ to be tight keeping the boundary fixed.
Let $\rho$, $\rho'$ be simple closed curves based at $p$.
Suppose $[\phi_{\rho}(R)]$, $[\phi_{\rho'}(R')]$ are adjacent.
Then there exists a path $\sigma_R$ disjoint from $R_0$ and a path $\sigma_{R'}$ disjoint from $R'_0$ such that $\sigma_{R'}\cdot\sigma_R$ is a simple closed loop based at $p$ and
$[\phi_{\rho'}(R')]=[\phi_{\sigma_{R'}\cdot\sigma_{R}\cdot\rho}(R')]$.
\end{proposition}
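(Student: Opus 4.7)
The plan is to reduce, via Lemma \ref{homeolemma}, to showing the adjacency of $[R]$ and $[\phi_{\rho''}(R')]$ in $\ms_p(L)$ where $\rho''=\rho'\cdot\rho^{-1}$, and then to construct $\sigma_R$ from an ambient isotopy furnished by Lemma \ref{isotopeincomplementlemma}.

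First, I would select a representative $S$ of $[\phi_{\rho''}(R')]$ that is tight with $R_0$ in $M_p$ and, after a small perturbation, disjoint from $R_0$. Since $\widetilde{M}_p=\widetilde{M}\setminus\pi^{-1}(p)$ and removing a discrete set does not affect connectedness of open sets, tightness in $\ms_p(L)$ implies tightness in $\ms(L)$, so $R'_0$ and $S$ are both disjoint from $R_0$, both represent $[\inc_*(R')]$, and each is tight with $R_0$ in $\ms(L)$. Applying Lemma \ref{isotopeincomplementlemma} componentwise with $R_+=R_0$ then supplies an isotopy of $R'_0$ to $S$ in $M\setminus R_0$, which I realise as an ambient isotopy $\Phi$ of $M$ supported in $M\setminus R_0$ with $\Phi_1(R'_0)=S$.

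Next, take $\sigma_R$ to be the trace of $p$ under $\Phi$, closed up by a path in $M\setminus(R_0\cup S)$ from $\Phi_1(p)$ back to $p$. Because the closing-up portion avoids both $R_0$ and $S$, the extended isotopy still maps $R'_0$ to $S$, so $[\phi_{\sigma_R}(R'_0)]=[S]=[\phi_{\rho''}(R'_0)]$ in $\ms_p(L)$. Lemma \ref{pathcoincidelemma} applied to the loops $\sigma_R$ and $\rho''$ then provides a simple loop $\sigma_{R'}$ based at $p$ and disjoint from $R'_0$ with $[\rho'']=[\sigma_{R'}\cdot\sigma_R]$ in $\pi_1(M,p)$. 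Multiplying on the right by $\rho$ and invoking the action of Remark \ref{actionremark} gives $[\phi_{\rho'}(R')]=[\phi_{\sigma_{R'}\cdot\sigma_R\cdot\rho}(R')]$. The concatenation $\sigma_{R'}\cdot\sigma_R$ is finally replaced by a simple loop in its homotopy class using the general-position and crossing-swap trick from Lemma \ref{homotopelooplemma}.

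The hardest step will be the closing up of $\sigma_R$ inside $M\setminus(R_0\cup S)$: this complement typically has two components when $R_0$ and $S$ are tight and adjacent, and the point $\Phi_1(p)$ supplied by Lemma \ref{isotopeincomplementlemma} need not lie in the same component as $p$. Dealing with this requires precomposing $\Phi$ with an auxiliary isotopy supported in $M\setminus R_0$ that transports $\Phi_1(p)$ onto the correct side of $S$, while verifying that the composite still carries $R'_0$ to a surface representing $[S]$ in $\ms_p(L)$.
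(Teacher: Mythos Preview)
Your overall strategy---reduce via Lemma \ref{homeolemma}, realise $[\phi_{\rho''}(R')]$ by a surface $S$ tight with $R_0$, and apply Lemma \ref{isotopeincomplementlemma} to obtain an ambient isotopy in $M\setminus R_0$ whose trace of $p$ gives $\sigma_R$---is exactly the paper's. The divergence is in how you produce $\sigma_{R'}$. You insist on first closing $\sigma_R$ to a \emph{loop} based at $p$ and then invoking Lemma \ref{pathcoincidelemma}, whereas the paper never closes $\sigma_R$ at all: it leaves $\sigma_R$ as a genuine path from a point $q$ (arranged to lie near $\partial M$) to $p$, and then simply \emph{chooses} $\sigma_{R'}$ to be any path from $p$ to $q$ disjoint from $R'_0$. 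Such a path always exists because $M\setminus R'_0$ is connected. Since $\sigma_{R'}$ misses $R'_0$, moving $p$ along $\sigma_{R'}$ leaves $R'_0$ unchanged, and the subsequent trip along $\sigma_R$ is the inverse of the isotopy from Lemma \ref{isotopeincomplementlemma}; this gives $[\phi_{\sigma_{R'}\cdot\sigma_R}(R'_0)]=[R'']$ directly, with no appeal to Lemma \ref{pathcoincidelemma}.

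Your detour creates the very obstacle you flag as the hardest step. Closing $\sigma_R$ inside $M\setminus(R_0\cup S)$ can genuinely fail when $\Phi_1(p)$ and $p$ lie in different components, and your proposed fix---precomposing with an auxiliary isotopy in $M\setminus R_0$ that drags $\Phi_1(p)$ across $S$---does not obviously preserve $[S]$ in $\ms_p(L)$: the extra isotopy will typically carry $S$ to a surface differing from $S$ by a tube around the crossing point, which need not be isotopic to $S$ rel $p$. So as written this step is a gap. The cure is not to patch the auxiliary isotopy but to drop the requirement that $\sigma_R$ be closed; the proposition only asks that the concatenation $\sigma_{R'}\cdot\sigma_R$ be a loop, and the paper exploits exactly this freedom.
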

\begin{proof}
Let $R''$ be a copy of $\phi_{\rho'\cdot\rho^{-1}}(R'_0)$.
By Lemma \ref{homeolemma}, $[\phi_{\rho'\cdot\rho^{-1}}(R')]$ is adjacent to $[R]$. Isotope $R''$ so that $R_0$ and $R''$ are tight and disjoint.
We wish to relate $R''$ to $R'_0$ so we find a suitable representative for $[\rho'\cdot\rho^{-1}]$.

As $R''$ and $R'_0$ are isotopic in $M$,
by Lemma \ref{isotopeincomplementlemma} they are isotopic in $M\setminus R_0$.
The path $\sigma_R^{-1}$ of $p$ under the isotopy taking $R''$ to $R'_0$ is disjoint from $R_0$.

We would like to take $\sigma_R$ as a replacement for $\rho'\cdot\rho^{-1}$.
However, it may not be possible to arrange that $\sigma_R$ begins at $p$, as it may lie in the wrong component of $M\setminus(R_0\cup R'_0)$.
On the other hand, we can arrange that $\sigma_R$ begins in a neighbourhood of $\partial M$.

Choose a second path $\sigma_{R'}$ that is disjoint from $R'_0$ such that $\sigma_{R'}\cdot\sigma_R$ is a simple closed loop.
Then $[\phi_{\rho'\cdot\rho^{-1}}(R')]=[R'']=[\phi_{\sigma_{R'}\cdot\sigma_R}(R')]$,
so $[\phi_{\rho'}(R')]=[\phi_{\sigma_{R'}\cdot\sigma_R\cdot\rho}(R')]$.
\end{proof}

\begin{remark}\label{boundintersectionremark}
We can bound the number of times $\sigma_{R'}$ meets $R_0$. More precisely, we can ensure this number is at most the number of components of $R'_0$ that $\sigma_R$ meets, and in particular at most the number of components of $R'$.  To do this, we begin with a copy of $\sigma_R^{-1}$ and modify it to arrive at $\sigma_{R'}$, as follows.

Run along $\sigma_R^{-1}$ until the point $x$ where it first meets $R'_0$. Find the last point $y$ at which $\sigma_{R}^{-1}$ meets this component of $R'_0$. Choose paths in $R'_0$ from $x$ and $y$ to the same point $z$ on $\partial R'_0$. Depending on which side of $R'_0$ the two sections of $\sigma_{R}^{-1}$ lie, either we can join these two paths together at $z$ in the complement of $R'_0$ or we can join them by a path that runs once around a meridian of $L$, which will cross $R_0$ exactly once.
Now continue along $\sigma_R^{-1}$ from $y$ to where it next meets $R'_0$ and repeat this process. Once the end of $\sigma_R^{-1}$ is reached, the resulting path $\sigma_{R'}$ has the required form.
\end{remark}

\begin{example}
For $n\geq 1$, we can give an example of a link $L_n$ where we may construct surfaces $R$ and $R'$, and a path $\sigma_R$, such that $\sigma_{R'}$ must cross $R$ at least $n$ times. We form $L_n$ using copies of the knot $7_4$. This knot has two distinct taut Seifert surfaces. Once made disjoint, these together cut the knot complement into two genus 2 handlebodies. The knot fits in a natural way into such a handlebody, so we may nest another copy of $7_4$ within one of them. We may repeat this until the link has $n$ components. Put one of the two surfaces for each link component into $R$, and put the other into $R'$, in such a way as to ensure that $R$ and $R'$ are tight (alternatively, choose which surfaces lie in $R$, then choose the orientations of the link components to ensure $R$ and $R'$ are tight). Take $p=\sigma_R(1)$ to lie in one of the two remaining outer handlebodies, and $\sigma_R(0)$ in the other, with $\sigma_R$ running once through each component of $R'$. Then to travel from $p$ to $\sigma_R(0)$ the path $\sigma_{R'}$ must cross every component of $R$.
\end{example}

\begin{remark}
Because linking number with a knot or link is measured by algebraic intersection with a Seifert surface, and $\sigma_R$ is disjoint from $R_0$, it is usually the case that if $\sigma_R$ meets a component $R'_b$ of $R'_0$ more than once then any two consecutive such points along $\sigma_R$ must cross $R'_b$ in opposite directions. This might not be true, however, if the components of $R$ and $R'$ link different combinations of components of $L$. A simple example of a link for which this can occur is the $(4,4)$ torus link.
\end{remark}

\begin{lemma}\label{pathpathadjacentlemma}
Let $R, R'\in\mathcal{R}$ such that $\inc_*(R)$ and $\inc_*(R')$ are adjacent in $\ms(L)$. Take copies $R_0$ of $R$ and $R'_0$ of $R'$, and isotope them to be tight keeping the boundary fixed. Let $\rho$ be a simple closed curve based at $p$. 
Let $\sigma_R$ be a path disjoint from $R_0$ and $\sigma_{R'}$ a path disjoint from $R'_0$ such that $\sigma_{R'}\cdot\sigma_R$ is a simple closed curve based at $p$.
Then $[\phi_{\rho}(R)]$ and $[\phi_{\sigma_{R'}\cdot\sigma_R\cdot\rho}(R')]$ are adjacent in $\ms_p(L)$.
\end{lemma}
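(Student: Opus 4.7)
The plan is to reduce to the case of trivial $\rho$ and then to produce an explicit tight disjoint pair of representatives. By Lemma \ref{homeolemma} applied with the loop $\rho$, the vertices $[\phi_{\rho}(R)]$ and $[\phi_{\sigma_{R'}\cdot\sigma_R\cdot\rho}(R')]=[\phi_{\rho}(\phi_{\sigma_{R'}\cdot\sigma_R}(R'))]$ are adjacent in $\ms_p(L)$ if and only if $[R]$ and $[\phi_{\sigma_{R'}\cdot\sigma_R}(R')]$ are. So I may assume $\rho$ is the trivial loop at $p$, and it suffices to show that $[R_0]$ and $[\phi_{\sigma_{R'}\cdot\sigma_R}(R'_0)]$ are adjacent in $\ms_p(L)$.

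Let $q$ denote the common endpoint of $\sigma_{R'}$ and $\sigma_R$. Since $\sigma_R$ is disjoint from the closed surface $R_0$, I choose a tubular neighbourhood of $\sigma_R$ disjoint from $R_0$ and, supported in it, build an ambient isotopy $\Psi_t$ of $M$ with $\Psi_t(q)=\sigma_R(t)$; then $\Psi_1(q)=p$ and $\Psi_1$ fixes $R_0$ pointwise. Analogously, using that $\sigma_{R'}$ is disjoint from $R'_0$, construct an ambient isotopy $\Psi'_t$ supported away from $R'_0$ with $\Psi'_t(p)=\sigma_{R'}(t)$, so that $\Psi'_1(p)=q$ and $\Psi'_1$ fixes $R'_0$ pointwise. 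Concatenating $\Psi'_t$ with $\Psi_s\circ\Psi'_1$ (for $s\in[0,1]$) gives an ambient isotopy of $M$ moving $p$ once around $\sigma_{R'}\cdot\sigma_R$, whose time-$1$ map $\Psi_1\circ\Psi'_1$ therefore represents $\phi_{\sigma_{R'}\cdot\sigma_R}$. Because $\Psi'_1$ fixes $R'_0$, this yields $\phi_{\sigma_{R'}\cdot\sigma_R}(R'_0)=\Psi_1(R'_0)$ up to isotopy in $M_p$; call this surface $R''_0$. Since $\Psi_1$ fixes $R_0$, the surfaces $R_0$ and $R''_0$ are disjoint.

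To verify tightness, lift $\Psi_1$ to $\widetilde{\Psi_1}\colon\widetilde{M}\to\widetilde{M}$ starting from the identity. Because $\Psi_t$ fixes $R_0$ pointwise throughout, so does $\widetilde{\Psi_1}$ on each lift of $R_0$. Since removing a point from a $3$-manifold does not alter its components, the tightness of $(R_0,R'_0)$ in $\ms_p(L)$ is equivalent to the statement that some lift of $M\setminus R'_0$ in $\widetilde{M}$ meets exactly two lifts of $M\setminus R_0$. Applying $\widetilde{\Psi_1}$ transfers this lift-intersection pattern to $(R_0,R''_0)$ in $\widetilde{M}$, and restricting to $\widetilde{M}_p$ gives tightness of $(R_0,R''_0)$ in $\ms_p(L)$. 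Distinctness of $[R_0]$ and $[R''_0]$ in $\ms_p(L)$ follows because their $\inc_*$-images are the distinct vertices $[R]$ and $[R']$ of $\ms(L)$, so $[R_0]$ and $[R''_0]$ are adjacent in $\ms_p(L)$ as required.

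The main obstacle I anticipate is cleanly managing the basepoint change from $p$ to $q$ that occurs mid-loop and ensuring that what is obtained at the end really is tightness in the $p$-punctured setting rather than in some other basepointed variant. This is resolved by the point-removal observation in the previous paragraph, which renders tightness for a disjoint pair insensitive to which single point of $M$ is deleted and thus lets tightness transfer across the homeomorphism $\Psi_1$ despite its moving $q$ to $p$.
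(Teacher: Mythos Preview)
Your proof is correct and follows essentially the same strategy as the paper's: factor the loop $\sigma_{R'}\cdot\sigma_R$ into two isotopies, one fixing $R'_0$ and one fixing $R_0$, so that the image of $R'_0$ under $\phi_{\sigma_{R'}\cdot\sigma_R}$ equals the image of the tight pair $(R_0,R'_0)$ under a single homeomorphism of $M$, hence is again tight. The only differences are cosmetic: you invoke Lemma~\ref{homeolemma} at the start to strip off $\rho$, whereas the paper applies $\phi_\rho$ to both surfaces at the end, and you are more explicit than the paper about why tightness in $M_p$ agrees with tightness in $M$ for surfaces avoiding $p$.
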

\begin{proof}
Let $R''$ be a copy of $R'_0$.
Consider how the position of $R''$ changes as $p$ moves along $\sigma_{R'}$ and $\sigma_R$.
Since $\sigma_{R'}$ is disjoint from $R'_0$, moving $p$ along $\sigma_{R'}$ does not change $R''$.
As $\sigma_{R}$ is disjoint from $R_0$, changing $R''$ when moving $p$ along it does not stop $R_0$ and $R''$ being tight.  
Then $\phi_{\rho}(R_0)$ and $\phi_{\rho}(R'')$ are also tight.
Thus $[\phi_{\rho}(R)]$ is adjacent to $[\phi_{\rho}(R'')]=[\phi_{\sigma_{R'}\cdot\sigma_R\cdot\rho}(R')]$.
\end{proof}
\subsection{Connected surfaces}

For the remainder of this section we will assume that every taut Seifert surface for $L$ is connected.
This is true in particular for all knots. 
In Proposition \ref{adjacenttopathsprop}, using Remark \ref{boundintersectionremark}, we find that
$M\setminus(R_0\cup R'_0)$ has two components and $\sigma_{R'}$ meets $R_0$ at most once.
In addition, the direction in which $\sigma_R$ crosses $R'_0$ alternates as measured along $\sigma_R$.

\begin{corollary}\label{adjacencycor2}
Let $R,R'\in\mathcal{R}$ and let $\rho$, $\rho'$ be simple closed curves based at $p$. Then $[\phi_{\rho}(R)]$ is adjacent to $[\phi_{\rho'}(R')]$ in $\ms_p(L)$ if and only if one of the following holds.
\begin{itemize}
\item[(1)] $R=R'$ and $\rho'\cdot\rho^{-1}$ can be homotoped to meet $R$ exactly once.
\item[(2)] $\inc_*(R)$, $\inc_*(R')$ are adjacent in $\ms(L)$, and there exists a simple closed curve $\rho''$ based at $p$ such that $[\rho]\sim_R[\rho'']$ and $[\rho']\sim_{R'}[\rho'']$.
\item[(3)] $\inc_*(R)$, $\inc_*(R')$ are adjacent in $\ms(L)$, and after isotoping copies $R_0$ of $R$ and $R'_0$ of $R'$ to be tight keeping the boundary fixed there exist simple closed curves $\rho''$ and $\sigma$ based at $p$ such that $[\rho]\sim_R[\rho'']$, $[\rho']\sim_{R'}[\sigma\cdot\rho'']$, and $\sigma$ meets $R_0$ and $R'_0$ each once in that order.
\end{itemize}
\end{corollary}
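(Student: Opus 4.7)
The proof splits into forward and reverse directions, with the main reductions coming from Proposition \ref{adjacenttopathsprop} (or Corollary \ref{adjacencycor1} when $R=R'$), together with the connectedness hypothesis via Remark \ref{boundintersectionremark}. If $[\phi_\rho(R)]$ and $[\phi_{\rho'}(R')]$ are adjacent in $\ms_p(L)$, then the discussion in the previous subsection forces $\inc_*(R)=\inc_*(R')$ or $\inc_*(R)$ adjacent to $\inc_*(R')$ in $\ms(L)$. In the first case $R=R'$ since $\mathcal{R}$ contains a unique representative per vertex of $\ms(L)$, and Corollary \ref{adjacencycor1} applied to $[\phi_\rho(R)]$ and $[\phi_{\rho'}(R)]$ yields case (1). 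Otherwise $R\neq R'$, and Proposition \ref{adjacenttopathsprop} provides paths $\sigma_R$ (from $q$ to $p$, disjoint from $R_0$) and $\sigma_{R'}$ (from $p$ to $q$, disjoint from $R'_0$) with $[\phi_{\rho'}(R')]=[\phi_{\omega\cdot\rho}(R')]$, writing $\omega:=\sigma_{R'}\cdot\sigma_R$. The connectedness hypothesis and Remark \ref{boundintersectionremark} allow me to arrange that $\sigma_{R'}$ meets $R_0$ at most once.

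The forward direction then splits by this intersection number. If $\sigma_{R'}$ is disjoint from $R_0$, then $\omega$ is disjoint from $R_0$; taking $\rho'':=\omega\cdot\rho$ gives $[\rho]\sim_R[\rho'']$ (witnessed by $\omega$), and Lemma \ref{pathcoincidelemma} yields $[\rho']\sim_{R'}[\rho'']$, which is case (2). If $\sigma_{R'}$ meets $R_0$ once, I would choose a path $\sigma_R''$ from $q$ to $p$ disjoint from $R_0$ and crossing $R'_0$ exactly once; this exists because $R'_0$ separates $M\setminus R_0$ into two components containing $q$ and $p$ respectively. Set $\sigma:=\sigma_{R'}\cdot\sigma_R''$, which after generic perturbation is a simple closed curve at $p$ meeting $R_0$ and $R'_0$ each exactly once, in the order $R_0$ first (since $\sigma_{R'}$, traversed first, hits $R_0$, and $\sigma_R''$ hits $R'_0$ afterwards). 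Let $\gamma:=\sigma_R^{-1}\cdot\sigma_R''\in\pi_1(M\setminus R_0,p)$ and set $\rho'':=\gamma^{-1}\cdot\rho$. Then $\rho=\gamma\cdot\rho''$ witnesses $[\rho]\sim_R[\rho'']$, while $\sigma\cdot\rho''=\sigma_{R'}\cdot\sigma_R''\cdot(\sigma_R'')^{-1}\cdot\sigma_R\cdot\rho=\omega\cdot\rho$, so $[\phi_{\sigma\cdot\rho''}(R')]=[\phi_{\rho'}(R')]$ and Lemma \ref{pathcoincidelemma} gives $[\rho']\sim_{R'}[\sigma\cdot\rho'']$, which is case (3).

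For the converse, case (1) is immediate from Corollary \ref{adjacencycor1}. In case (2), the relations $[\rho]\sim_R[\rho'']$ and $[\rho']\sim_{R'}[\rho'']$, combined with Remark \ref{disjointpathremark} and the action of $\pi_1(M,p)$ on $\ms_p(L)$, give $[\phi_\rho(R)]=[\phi_{\rho''}(R)]$ and $[\phi_{\rho'}(R')]=[\phi_{\rho''}(R')]$; Lemma \ref{homeolemma} then reduces adjacency to that of $[R]$ and $[R']$ in $\ms_p(L)$, which holds by the construction of $\mathcal{R}$. In case (3), split $\sigma$ at a point $q^*$ lying between $a=\sigma\cap R_0$ and $b=\sigma\cap R'_0$: the initial portion $\sigma_{R'}^*$ from $p$ through $a$ to $q^*$ is disjoint from $R'_0$, and the terminal portion $\sigma_R^*$ from $q^*$ through $b$ to $p$ is disjoint from $R_0$, with $\sigma=\sigma_{R'}^*\cdot\sigma_R^*$. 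Lemma \ref{pathpathadjacentlemma} (with the constant loop in place of $\rho$) gives that $[R]$ and $[\phi_\sigma(R')]$ are adjacent, and Lemma \ref{homeolemma} transports this to $[\phi_{\rho''}(R)]=[\phi_\rho(R)]$ and $[\phi_{\rho''}(\phi_\sigma(R'))]=[\phi_{\sigma\cdot\rho''}(R')]=[\phi_{\rho'}(R')]$. The main obstacle is the case (3) forward direction: constructing $\sigma$ with exactly the right crossing structure and verifying the identity $\sigma\cdot\rho''=\omega\cdot\rho$ in $\pi_1$. The key is the cancellation $\sigma_R''\cdot(\sigma_R'')^{-1}$, which lets $\gamma$ serve as a bridge in $M\setminus R_0$ between the original path $\sigma_R$ from Proposition \ref{adjacenttopathsprop} and the simpler $\sigma_R''$ crossing $R'_0$ just once.
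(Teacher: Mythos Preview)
Your argument is correct and follows essentially the same route as the paper's proof: Corollary~\ref{adjacencycor1} handles the case $R=R'$, Proposition~\ref{adjacenttopathsprop} together with Remark~\ref{boundintersectionremark} reduces the forward direction when $R\neq R'$ to the dichotomy on $|\sigma_{R'}\cap R_0|\in\{0,1\}$, and Lemma~\ref{pathpathadjacentlemma} with Lemma~\ref{homeolemma} handles the converse. The only cosmetic difference is that in the forward direction of case~(3) the paper splits $\sigma_R$ itself at a point just past its first crossing with $R'_0$ (yielding $\sigma_a\cdot\sigma_b$), whereas you introduce an auxiliary path $\sigma_R''$ crossing $R'_0$ once and cancel it against its inverse; unwinding your $\gamma$ shows the two constructions produce the same $\rho''$ and $\sigma$ up to homotopy. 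One small point you leave implicit: the assertion that $q$ and $p$ lie in different components of $M\setminus(R_0\cup R'_0)$ follows because $\sigma_{R'}$, which is disjoint from $R'_0$ and crosses the separating surface $R_0$ exactly once, joins them.
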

\begin{proof}
If $R=R'$ then
(1) is given by Corollary \ref{adjacencycor1}.
Suppose instead that $R\neq R'$. 

First assume that $[\phi_{\rho}(R)]$ and $[\phi_{\rho'}(R')]$ are adjacent. Then $\inc_*(R)$ and $\inc_*(R')$ are adjacent in $\ms(L)$.
Apply Proposition \ref{adjacenttopathsprop}.
If $\sigma_{R'}$ is disjoint from $R_0$, then (2) holds with $\rho''=\sigma_{R'}\cdot\sigma_R\cdot\rho$. 

Suppose that $\sigma_{R'}$ meets $R_0$ once. Then $\sigma_R$ meets $R'_0$ an odd number of times. Let $x$ be a point on $\sigma_R$ just after it first crosses $R'_0$. Then $x$ lies in the same component of $M\setminus(R_0\cup R'_0)$ as $p$.
By moving $x$ to $p$ in $M\setminus(R_0\cup R'_0)$, isotope $\sigma_R$ to $\sigma_a\cdot\sigma_b$, where $\sigma_a$ is a path from $\sigma_R(0)$ to $p$ that is disjoint from $R_0$ and meets $R'_0$ once, and $\sigma_b$ is a simple closed curve based at $p$ that is disjoint from $R_0$.
Let $\rho''=\sigma_b\cdot\rho$ and $\sigma=\sigma_{R'}\cdot\sigma_a$.
Then $[\rho]\sim_R[\rho'']$ and $[\rho']\sim_{R'}[\sigma\cdot\rho'']$, so (3) holds.

Conversely, suppose that (2) holds. By Lemma \ref{homeolemma}, $[\phi_{\rho}(R)]=[\phi_{\rho''}(R)]$ is adjacent to $[\phi_{\rho''}(R')]=[\phi_{\rho'}(R')]$.

Finally, suppose that (3) holds. Let $x$ be a point on $\sigma$ between where it meets $R$ and where it meets $R'$. Let $\sigma_R$ be the section of $\sigma$ up to $x$, and $\sigma_{R'}$ the section after $x$. Applying Lemma \ref{pathpathadjacentlemma} shows that $[\phi_{\rho}(R)]=[\phi_{\rho''}(R)]$ is adjacent to  $[\phi_{\sigma\cdot\rho''}(R')]=[\phi_{\rho'}(R')]$.
\end{proof}

\begin{remark}
If (3) holds, then $[\phi_{\sigma\cdot\rho''}(R)]$ is adjacent to $[\phi_{\rho}(R)]$ and to $[\phi_{\rho'}(R')]$, giving a 2--simplex in $\ms_p(L)$. Similarly, $[\phi_{\rho''}(R')]$ is also adjacent to $[\phi_{\rho}(R)]$ and to $[\phi_{\rho'}(R')]$.
\end{remark}

Suppose (3) holds. If $\inc_*(R)\leq \inc_*(R')$ then $\sigma$ crosses $R$ and $R'$ in the positive direction and $\lk(\rho')-\lk(\rho)=\lk(\sigma)=1$. If $\inc_*(R)\geq \inc_*(R')$ then $\sigma$ crosses $R$ and $R'$ in the negative direction and $\lk(\rho')-\lk(\rho)=\lk(\sigma)=-1$.
Similarly, if (1) holds then $|\lk(\rho)-\lk(\rho')|=1$.
On the other hand, if (2) holds then $\lk(\rho)=\lk(\rho')$.
In each case, if $\inc_*(R)<_{R_p}\inc_*(R')$ then $\lk(\rho)\leq\lk(\rho')$.

\begin{theorem}
The dimension of $\ms_p(L)$ is one higher than the dimension of $\ms(L)$.
\end{theorem}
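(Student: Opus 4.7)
The plan is to establish the two inequalities $\dim(\ms_p(L)) \geq \dim(\ms(L)) + 1$ and $\dim(\ms_p(L)) \leq \dim(\ms(L)) + 1$ separately, using Corollary~\ref{adjacencycor2} together with the monotonicity statement recorded in the paragraph preceding the theorem.

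For the lower bound I would take a maximal simplex $\{[R_0], \ldots, [R_n]\}$ of $\ms(L)$ with $n = \dim(\ms(L))$, pick representatives in $\mathcal{R}$ ordered by $[R_0] \leq \cdots \leq [R_n]$, and exhibit the $(n+1)$-simplex of $\ms_p(L)$ with the $n+2$ distinct vertices $(R_0, [e]), (R_1, [e]), \ldots, (R_n, [e]), (R_n, [\sigma])$. Here $e$ denotes the trivial loop at $p$, and $\sigma$ is a simple loop at $p$ of linking number one, homotoped so that it crosses the stacked tight copies $R_0, R_1, \ldots, R_n$ (arranged near a chosen component of $L$ in that order with respect to $\leq_K$) each once, in that order. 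Pairwise adjacencies are then read off from Corollary~\ref{adjacencycor2}: any two $(R_i, [e]), (R_j, [e])$ use case (2) with witness $\rho'' = e$; the pair $(R_n, [e]), (R_n, [\sigma])$ uses case (1) since $\sigma$ meets $R_n$ exactly once; and each pair $(R_i, [e]), (R_n, [\sigma])$ with $i < n$ uses case (3), taking $\rho'' = e$ and $\sigma$ itself as the witness path (which meets the tight copies of $R_i$ and $R_n$ each once in the correct order).

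For the upper bound let $\Sigma$ be any $m$-simplex of $\ms_p(L)$, with vertices $v_i = (\mathsf{R}_i, [\rho_i])$ for $\mathsf{R}_i \in \mathcal{R}$. The images $\inc_*(v_i)$ span a simplex of $\ms(L)$, so the number $k$ of distinct values among the $\mathsf{R}_i$ satisfies $k \leq \dim(\ms(L)) + 1$. Every case of Corollary~\ref{adjacencycor2} forces adjacent vertices to have linking numbers differing by at most one, so all $\lk(\rho_i)$ lie in a common set $\{a, a+1\}$, and two vertices sharing a surface must differ by exactly one via case (1), bounding each surface's contribution to $\Sigma$ by two.

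The crux is to show that at most one surface in $\Sigma$ contributes two vertices. If $\mathsf{R}$ contributes vertices at linking numbers $a$ and $a+1$ and $\mathsf{R}' \neq \mathsf{R}$ also appears, the monotonicity ``if $\inc_*(R) <_{R_p} \inc_*(R')$ then $\lk(\rho) \leq \lk(\rho')$'' pins every copy of $\mathsf{R}'$ to a single linking number: to $a+1$ if $[\mathsf{R}] <_{R_p} [\mathsf{R}']$ (by comparing with the copy of $\mathsf{R}$ at linking number $a+1$), and to $a$ if $[\mathsf{R}'] <_{R_p} [\mathsf{R}]$. Hence $|\Sigma| \leq 2 + (k-1) = k + 1 \leq \dim(\ms(L)) + 2$, so $m \leq \dim(\ms(L)) + 1$. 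The point most requiring care is the case (3) verification in the lower-bound construction, for which arranging $\sigma$ to traverse the stacked tight copies in the correct order is what allows a single loop to serve as witness for all the required pairs simultaneously.
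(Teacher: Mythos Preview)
Your proof is correct and follows essentially the same approach as the paper's. Both arguments use Corollary~\ref{adjacencycor2} and the linking-number monotonicity to bound the dimension from above, and both construct the extra vertex for the lower bound by appending $(R_n,[\sigma])$ with $\sigma$ a meridional loop; your exposition is somewhat more explicit about why at most one surface can repeat, while the paper compresses this into a simultaneous ordering by $\inc_*$ and $\lk$.
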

\begin{proof}
Let $(R_0,[\rho_0]),\ldots,(R_n,[\rho_n])\in X$ such that $[\phi_{\rho_0}(R_0)],\ldots,[\phi_{\rho_n}(R_n)]$ are the vertices of an $n$--simplex in $\ms_p(L)$. 
Then $\inc_*(R_0),\ldots,\inc_*(R_n)$ are the vertices of a simplex in $\ms(L)$.
Without loss of generality, $\inc_*(R_0)\leq \inc_*(R_1)\leq \cdots\leq \inc_*(R_n)$ and $\lk(\rho_0)\leq\lk(\rho_1)\leq\cdots\leq\lk(\rho_n)$.
Since $0\leq\lk(\rho_n)-\lk(\rho_0)\leq 1$ there exists $k\leq n$ such that $\lk(\rho_i)=\lk(\rho_0)$ if $i\leq k$ and $\lk(\rho_i)=\lk(\rho_0)+1$ if $i> k$.

If $R_j=R_{j+1}$ for some $j$ then $j=k$, since $(R_j,[\rho_j])$ and $(R_{j+1},[\rho_{j+1}])$ are distinct.
Hence there is at most one such $j$. This means that the dimension of $\ms_p(L)$ is at most one higher than the dimension of $\ms(L)$.

Conversely, let $R_0,\ldots,R_n\in\mathcal{R}$ be distinct and such that $\inc_*(R_0),\ldots,\inc_*(R_n)$ are the vertices of a maximal dimensional simplex in $\ms(L)$. 
Without loss of generality, $\inc_*(R_0)\leq \inc_*(R_1)\leq \cdots\leq \inc_*(R_n)$.
Let $\rho$ be any simple closed curve based at $p$, and let $\sigma$ be the simple closed curve based at $p$ that runs once around the meridian of $L$ in the positive direction (recall that we positioned $p$ near $\partial M$).
Then, by Corollary \ref{adjacencycor2}, $[\phi_{\rho}(R_0)],\ldots,[\phi_{\rho}(R_n)],[\phi_{\sigma\cdot\rho}(R_n)]$ are all pairwise adjacent, and so are the vertices of an $(n+1)$--simplex in $\ms_p(L)$.
\end{proof}

The following result is a version of Corollary \ref{adjacencycor2} that allows us more freedom to position the surfaces of interest.
We will make use of this freedom in the proof of Proposition \ref{threedisjointprop}.
\begin{corollary}\label{adjacencycor3}
Let $R,R'\in\mathcal{R}$ and let $\rho$, $\rho'$ be simple closed curves based at $p$. 
Suppose that $[\phi_{\rho}(R)]$ and $[\phi_{\rho'}(R')]$ are adjacent in $\ms_p(L)$.
Let $R_0$, $R'_0$ be copies of $R$, $R'$ respectively that have been isotoped in $M_p$ to be tight keeping the boundary fixed. If $R=R'$ then ensure that $R_0$ and $R'_0$ coincide.
Then one of the following holds.
\begin{itemize}
\item[(1)] $R_0=R'_0$ and $\rho'\cdot\rho^{-1}$ can be homotoped to meet $R_0$ exactly once.
\item[(2)] $\inc_*(R_0)$, $\inc_*(R'_0)$ are adjacent in $\ms(L)$, and there exists a simple closed curve $\rho''$ based at $p$ such that $[\rho]\sim_R[\rho'']$ and $[\rho']\sim_{R'}[\rho'']$.
\item[(3)] $\inc_*(R_0)$, $\inc_*(R'_0)$ are adjacent in $\ms(L)$, and 
there exist simple closed curves $\rho''$ and $\sigma$ based at $p$ such that $[\rho]\sim_R[\rho'']$, $[\rho']\sim_{R'}[\sigma\cdot\rho'']$, and $\sigma$ meets $R_0$ and $R'_0$ each once in that order.
\end{itemize}
\end{corollary}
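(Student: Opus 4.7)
The approach is to re-run the proof of Corollary \ref{adjacencycor2} but with our specifically chosen tight copies $R_0,R'_0$ used throughout, rather than copies produced inside the argument. The key point is that Proposition \ref{adjacenttopathsprop} accepts as input any tight copies of $R,R'$ obtained by a boundary-fixing isotopy in $M_p$, so the given $R_0,R'_0$ are admissible.

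If $R=R'$, then by hypothesis $R_0=R'_0$, and Corollary \ref{adjacencycor1} applied with $R_0$ in place of $R$ immediately gives condition (1). Otherwise $\inc_*(R)$ and $\inc_*(R')$ are distinct vertices; since $\inc_*$ is simplicial and $[\phi_\rho(R)],[\phi_{\rho'}(R')]$ are adjacent, they must in fact be adjacent in $\ms(L)$. Apply Proposition \ref{adjacenttopathsprop} with the given $R_0,R'_0$: this yields paths $\sigma_R$ (disjoint from $R_0$) and $\sigma_{R'}$ (disjoint from $R'_0$) with $\sigma_{R'}\cdot\sigma_R$ a loop based at $p$ and $[\phi_{\rho'}(R')]=[\phi_{\sigma_{R'}\cdot\sigma_R\cdot\rho}(R')]$. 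By Remark \ref{boundintersectionremark} together with the standing assumption that all taut Seifert surfaces are connected, $\sigma_{R'}$ meets $R_0$ at most once.

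Split into two subcases exactly as in the proof of Corollary \ref{adjacencycor2}: if $\sigma_{R'}$ is disjoint from $R_0$, set $\rho''=\sigma_{R'}\cdot\sigma_R\cdot\rho$ to obtain (2); otherwise $\sigma_R$ crosses $R'_0$ an odd number of times, and using a point just past the first crossing one isotopes $\sigma_R$ in the complement of $R_0$ to $\sigma_a\cdot\sigma_b$, with $\sigma_a$ a path to $p$ crossing $R'_0$ once and $\sigma_b$ a loop at $p$; then $\rho''=\sigma_b\cdot\rho$ and $\sigma=\sigma_{R'}\cdot\sigma_a$ give (3). Verifying $[\rho]\sim_R[\rho'']$ reduces to observing that the constructed connecting loop lies in $M\setminus R_0$ and then transporting it via a boundary-fixing isotopy of $M_p$ taking $R_0$ to $R$ (which fixes $p$) to a homotopic loop in $M\setminus R$ representing the same class in $\pi_1(M,p)$; verifying $[\rho']\sim_{R'}[\rho'']$ (respectively $[\rho']\sim_{R'}[\sigma\cdot\rho'']$) follows by applying Lemma \ref{pathcoincidelemma} to the surface $R'$, since by construction $[\phi_{\rho'}(R')]=[\phi_{\rho''}(R')]$ (resp.\ $[\phi_{\sigma\cdot\rho''}(R')]$).

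The main subtlety, rather than any genuine obstacle, is the book-keeping required to distinguish the fixed representatives $R,R'\in\mathcal{R}$ (with respect to which the relations $\sim_R,\sim_{R'}$ are defined) from the specified tight copies $R_0,R'_0$ appearing in the statement: loops constructed to be disjoint from $R_0$ must be re-interpreted as elements of $\pi_1(M\setminus R,p)$ via a boundary-fixing ambient isotopy in $M_p$, using that such an isotopy fixes $p$ and induces the required isomorphism on the fundamental groups of the complements.
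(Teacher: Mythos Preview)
Your proposal is correct and matches the paper's approach: the paper states Corollary~\ref{adjacencycor3} without proof, treating it as an immediate variant of Corollary~\ref{adjacencycor2} obtained by feeding the given tight copies $R_0,R'_0$ into Proposition~\ref{adjacenttopathsprop} and re-running the same case analysis. Your careful book-keeping about $\sim_R$ versus $\sim_{R_0}$ is more than the paper provides; note that this subtlety dissolves once you observe (via Remark~\ref{disjointpathremark} and Lemma~\ref{pathdisjointlemma}) that a loop can be homotoped off $R$ if and only if it can be homotoped off $R_0$, so in fact $\sim_R=\sim_{R_0}$ as relations on $\pi_1(M,p)$.
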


\begin{theorem}[\cite{MR0577064} Chapter IV Theorem 2.6]\label{freeproductthm}
Let $G_1,G_2$ be groups, $H_1\leq G_1$ and $H_2\leq G_2$. Let $\phi\colon H_1\to H_2$ be an isomorphism.
If $g_1\in G_1$, $g_2\in G_2$ and $g_1\cdot g_2=1$ in $\langle G_1, G_2 \mid h_1=\phi(h_1)\forall h_1\in H_1\rangle$ then $g_1\in H_1$.
\end{theorem}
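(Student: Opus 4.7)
The plan is to invoke the normal form theorem for amalgamated free products. Write $G = \langle G_1, G_2 \mid h = \phi(h)\ \forall h\in H_1\rangle$ for the amalgam, and let $H\leq G$ denote the common image of $H_1$ and $H_2$. Fix right transversals $T_i\subseteq G_i$ for $H_i$ in $G_i$, each containing $1$.

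First I would recall (or, if one wants to be self-contained, establish, following Magnus--Karrass--Solitar or by exhibiting the canonical action of $G$ on its Bass--Serre tree) the normal form theorem: every element of $G$ admits a unique expression $h\cdot s_1 s_2\cdots s_k$ with $h\in H$, each $s_j\in (T_1\cup T_2)\setminus\{1\}$, and consecutive $s_j$'s coming from different $T_i$. Two immediate corollaries of this normal form are that each inclusion $G_i\hookrightarrow G$ is injective and that $G_1\cap G_2 = H$ inside $G$.

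Second, I would argue by contradiction: suppose $g_1\notin H_1$. Write its $G_1$-normal form as $g_1 = h_1 s$ with $h_1\in H_1$ and $s\in T_1\setminus\{1\}$. If also $g_2\notin H_2$, write $g_2 = h_2 t$ with $h_2\in H_2$ and $t\in T_2\setminus\{1\}$. Using the identification $h_2 = \phi^{-1}(h_2)\in H_1$ in $G$, decompose $s\cdot h_2 = h_1' s'$ with $h_1'\in H_1$ and $s'\in T_1$; since $s\neq 1$, necessarily $s'\neq 1$. Then $g_1 g_2 = (h_1 h_1')\cdot s'\cdot t$ is a normal-form expression of length two, contradicting $g_1 g_2 = 1$, whose only normal form has length zero. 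The remaining alternative is $g_2\in H_2$, whence $g_1 = g_2^{-1}\in H$; combined with $g_1\in G_1$ and $G_1\cap H = H_1$, this gives $g_1\in H_1$ after all.

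The main obstacle is the normal form theorem itself: uniqueness of length-reduced words is what makes the amalgamated product behave nicely, and its proof demands either a careful induction on word length together with a verification that the reduction rewriting is confluent, or the Bass--Serre tree construction followed by an analysis of geodesic edge paths. Once this is in hand, the deduction above is purely formal. Since the result is cited from \cite{MR0577064}, I would simply import the normal form theorem and carry out the two-case analysis sketched above.
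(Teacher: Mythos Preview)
Your argument is correct. The paper does not give its own proof of this statement; it is quoted directly from \cite{MR0577064} and used as a black box. The normal-form approach you outline is precisely the machinery developed in that reference, so your proposal aligns with the cited source rather than departing from it.
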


\begin{proposition}\label{threedisjointprop}
Let $(R_0,[\rho_0]),(R_1,[\rho_1]),(R_2,[\rho_2])\in X$ such that $R_1$, $R_2$, $R_3$ can be isotoped in $M_p$ to be pairwise tight (keeping the boundary fixed), and such that $[\phi_{\rho_0}(R_0)]$, $[\phi_{\rho_1}(R_1)]$, $[\phi_{\rho_2}(R_2)]$ span a 2--simplex in $\ms_p(L)$.
Let $R'_0$, $R'_1$, $R'_2$ be copies of $\phi_{\rho_0}(R_0)$, $\phi_{\rho_1}(R_1)$, $\phi_{\rho_2}(R_2)$ respectively.
Then $R'_0$, $R'_1$, $R'_2$ can be isotoped in $M_p$ keeping the boundary fixed so that, after the isotopy, each pair is tight.
\end{proposition}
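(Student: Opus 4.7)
The plan is to place the three representatives $R_0,R_1,R_2\in\mathcal{R}$ in a pairwise tight position in $M_p$ keeping the boundary fixed, as guaranteed by hypothesis, and then apply Corollary~\ref{adjacencycor3} to each of the three pairs to extract a consistent system of loops based at $p$ from which the desired pairwise tight representatives can be constructed via the homeomorphisms $\phi_{(-)}$.

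After reindexing, assume $\lk(\rho_0)\le\lk(\rho_1)\le\lk(\rho_2)$; the linking-number remarks following Corollary~\ref{adjacencycor2} then force $\inc_*(R_0)\le\inc_*(R_1)\le\inc_*(R_2)$. The proof of the dimension theorem shows that $\lk(\rho_2)-\lk(\rho_0)\in\{0,1\}$, so let $k$ denote the jump index, with $\lk(\rho_i)=\lk(\rho_0)$ for $i\le k$ and $\lk(\rho_i)=\lk(\rho_0)+1$ for $i>k$. Applied to the already-tight triple, Corollary~\ref{adjacencycor3} distributes the pairs as follows: same-level pairs (both $\le k$ or both $>k$) admit only case~(2), since case~(1) requires $|\lk(\rho_i)-\lk(\rho_j)|=1$; a cross-level pair $(i,j)$ with $i\le k<j$ admits either case~(1) (when $R_i=R_j\in\mathcal{R}$) or case~(3).

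The main step is to find a single base loop $\alpha$ at $p$ and a simple closed loop $\sigma$ at $p$ with $\lk(\sigma)=1$, such that $\sigma$ can be homotoped to meet $R_j$ once for each $j>k$ (in the order dictated by the ordering) while being disjoint from each $R_i$ with $i\le k$, and such that $[\rho_i]\sim_{R_i}[\alpha]$ for $i\le k$ and $[\rho_i]\sim_{R_i}[\sigma\cdot\alpha]$ for $i>k$. Granted these, I would set $R'_i=\phi_\alpha(R_i)$ for $i\le k$ and $R'_i=\phi_{\sigma\cdot\alpha}(R_i)$ for $i>k$. Tightness of a same-level pair $(R'_i,R'_j)$ then follows from the pairwise tightness of $R_i,R_j$ transported by the homeomorphism $\phi_\alpha$ (respectively $\phi_{\sigma\cdot\alpha}$), while tightness of a cross-level pair is obtained by decomposing $\sigma=\sigma_{R_j}\cdot\sigma_{R_i}$ with $\sigma_{R_i}$ disjoint from $R_j$ and $\sigma_{R_j}$ disjoint from $R_i$, and invoking Lemma~\ref{pathpathadjacentlemma}.

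The hard part will be the construction of this common $\alpha$ and $\sigma$. Corollary~\ref{adjacencycor3} produces, for each pair separately, a witnessing loop $\rho''$ (and in case~(3) an auxiliary $\sigma_{ij}$ meeting $R_i$ and $R_j$ once each); the three outputs must be fused into a single pair $(\alpha,\sigma)$ that respects all three equivalences $\sim_{R_0},\sim_{R_1},\sim_{R_2}$ simultaneously. I would start from the witness for one pair (say $(0,1)$) and then, as each further pair is incorporated, adjust $\alpha$ inside its $\sim_{R_i}$-class and $\sigma$ inside its homotopy class. The essential tool here is Lemma~\ref{pathcoincidelemma}, which shows that any two loops producing the same surface $\phi_\rho(R_i)$ differ by a loop disjoint from $R_i$ and so represent the same coset in $X_{R_i}$; combined with the identification of the stabiliser of $[R_i]$ under the $\pi_1(M;p)$-action of Remark~\ref{actionremark} as the image of $\pi_1(M\setminus R_i;p)$, this lets me absorb the discrepancies between the different pair witnesses into elements of the relevant stabiliser subgroups without disturbing the equivalences already secured.
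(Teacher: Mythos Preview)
Your overall architecture matches the paper's: order by linking number, use Corollary~\ref{adjacencycor3} to extract witness loops for each pair, and then try to fuse them into a single base loop $\alpha$ (and, when there is a jump, a single $\sigma$). The gap is in the fusing step, which you describe only as ``absorbing the discrepancies into elements of the relevant stabiliser subgroups without disturbing the equivalences already secured.'' This is precisely the hard point, and Lemma~\ref{pathcoincidelemma} alone does not do it. Concretely, after normalising so that $[\phi_{\rho_0}(R_0)]=[R_0]$ and $[\phi_{\rho_1}(R_1)]=[R_1]$, Corollary~\ref{adjacencycor3} applied to the pairs $(0,2)$ and $(1,2)$ gives you loops $\rho_{02}$ disjoint from $R_0$ and $\rho_{12}$ disjoint from $R_1$, both representing $[\phi_{\rho_2}(R_2)]$; Lemma~\ref{pathcoincidelemma} tells you they differ by a loop $\sigma$ disjoint from $R_2$. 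What you need is a single loop disjoint from \emph{both} $R_0$ and $R_1$, and nothing you have cited forces an element of $\pi_1(M\setminus R_0,p)$ that agrees (modulo $\pi_1(M\setminus R_2,p)$) with an element of $\pi_1(M\setminus R_1,p)$ to lie in $\pi_1(M\setminus(R_0\cup R_1),p)$.

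The paper fills this gap with a genuine algebraic input you did not anticipate. Lifting to the infinite cyclic cover $\widetilde{M}$ and cutting along lifts $\widetilde{R}_1$ and $\tau(\widetilde{R}_0)$ exhibits $\pi_1(\widetilde{M})$ as the amalgamated free product of $\pi_1(U)$ and $\pi_1(V)$ over $\pi_1(W)$, where $W=U\cap V$ is the region between the two lifts. The lifts $\widetilde{\rho}_{02}$ and $\widetilde{\sigma}\cdot\widetilde{\rho}_{12}$ lie in $\pi_1(V)$ and $\pi_1(U)$ respectively and have trivial product, so the normal form theorem for amalgamated free products (Theorem~\ref{freeproductthm}) forces $\widetilde{\rho}_{02}\in\pi_1(W)$; hence $\rho_{02}$ can be homotoped to be disjoint from $R_0\cup R_1$. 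The case with a jump in linking number is handled the same way. Without this (or an equivalent) argument, your plan does not establish that the three pairwise witnesses can be made simultaneously compatible.
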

\begin{proof}
Assume that
$\inc_*(R_0)\leq \inc_*(R_1)\leq \inc_*(R_2)$ and $\lk(\rho_0)\leq \lk(\rho_1)\leq\lk(\rho_2)$.
Since $\lk(\rho_0)$, $\lk(\rho_1)$, $\lk(\rho_2)$ take at most 2 values, by the symmetry in Corollary \ref{adjacencycor3} we may also assume that $\lk(\rho_1)=\lk(\rho_0)$ and either $\lk(\rho_2)=\lk(\rho_0)$ or $\lk(\rho_2)=\lk(\rho_0)+1$.
We additionally assume that $R_0$, $R_1$, $R_2$ are already pairwise tight (with no further isotopy),  and that if two of these surfaces can be made to coincide then they already coincide.

By Corollary \ref{adjacencycor3}, there exists a simple closed curve $\rho_{01}$ based at $p$ such that $[\rho_0]\sim_{R_0^*}[\rho_{01}]$ and $[\rho_1]\sim_{R_1^*}[\rho_{01}]$.
Applying the homeomorphism $\phi_{\rho_{01}}$ shows that we can further reduce to the case where $[\phi_{\rho_0}(R_0)]=[R_0]$ and $[\phi_{\rho_1}(R_1)]=[R_1]$.

Let $\widetilde{R}_0$, $\widetilde{R}_1$, $\widetilde{R}_2$ be lifts of $R_0$, $R_1$, $R_2$ respectively to $\widetilde{M}$ such that $\partial\widetilde{R}_0=\partial\widetilde{R}_1=\partial\widetilde{R}_2$.
Each of $\widetilde{R}_0$, $\widetilde{R}_1$, $\widetilde{R}_2$ is separating in $\widetilde{M}$.
Let $U$ be the section of $\widetilde{M}$ above $\widetilde{R}_1$, and $V$ the section below $\tau(\widetilde{R}_0)$. Also let $W=U\cap V$, and let $\widetilde{p}$ be the lift of $p$ in $W$. Note that $\widetilde{R}_2$ separates $W$ and $\widetilde{p}$ lies above $\widetilde{R}_2$.

As $\widetilde{R}_1$ and $\tau(\widetilde{R}_0)$ are incompressible, the maps $\inc_U\colon\pi_1(W)\to\pi_1(U)$ and $\inc_V\colon\pi_1(W)\to\pi_1(V)$ induced by inclusion are injective. Thus $\pi_1(\widetilde{M})$ is given by the amalgamated free product 
$\langle \pi_1(U),\pi_1(V)\mid\inc_U([\rho])=\inc_V([\rho])\forall[\rho]\in\pi_1(W)\rangle$ (see \cite{MR0577064} page 180).

First suppose that $\lk(\rho_2)=\lk(\rho_0)$. Then, by Corollary \ref{adjacencycor3}, there exists a simple closed curve $\rho_{02}$ based at $p$ such that $[\rho_0]\sim_{R_0^*}[\rho_{02}]$ and $[\rho_2]\sim_{R_2^*}[\rho_{02}]$. This means that (after a homotopy) $\rho_{02}$ is disjoint from $R_0$. Similarly there exists a simple closed curve $\rho_{12}$ based at $p$ and disjoint from $R_1$ such that $[\phi_{\rho_2}(R_2)]=[\phi_{\rho_{12}}(R_2)]$.
From Lemma \ref{pathcoincidelemma} we see that $[\rho_{02}]=[\sigma\cdot\rho_{12}]$ for some simple closed curve $\sigma$ based at $p$ and disjoint from $R_2$.

As $\rho_{02}$, $\rho_{12}$, $\sigma$ are each disjoint from some Seifert surface for $L$, they all lift to closed curves in $\widetilde{M}$.
Let $\widetilde{\rho}_{02}$, $\widetilde{\rho}_{12}$, $\widetilde{\sigma}$ be the lifts of $\rho_{02}$, $\rho_{12}$, $\sigma$ respectively that are based at $\widetilde{p}$.
Notice that $[\widetilde{\rho}_{02}]\in\pi_1(V)$ and $[\widetilde{\sigma}\cdot\widetilde{\rho}_{12}]\in\pi_1(U)$.
Because $[\widetilde{\rho}_{02}]^{-1}\cdot[\widetilde{\sigma}\cdot\widetilde{\rho}_{12}]=1$ in $\pi_1(\widetilde{M})$, Theorem \ref{freeproductthm} gives that $[\widetilde{\rho}_{02}]\in\pi_1(W)$.
Homotope $\rho_{02}$ so that $\widetilde{\rho}_{02}$ lies in $W$. 
Then $\rho_{02}$ is disjoint from $R_0\cup R_1$.
Hence $\phi_{\rho_{02}}(R_2)$ is tight with respect to $R_0$ and with respect to $R_1$.

Now suppose instead that $\lk(\rho_2)=\lk(\rho_0)+1$. 
In this case it may be that $[R_2]=[R_1]$. If this is so then $R_1$ and $R_2$ coincide.
In any case, by Corollary \ref{adjacencycor3}, there exist simple closed curves $\rho_{02}$, $\sigma_{02}$ based at $p$ such that $\rho_{02}$ is disjoint from $R_0$, $\sigma_{02}$ meets $R_0$ and $R_2$ once each in that order, and $[\phi_{\rho_2}(R_2)]=[\phi_{\rho_{02}}(R_2)]$. Note that $\sigma_{02}$ crosses $R_0$ and $R_2$ in the positive direction.
Similarly there exist analogous curves $\rho_{12}$ and $\sigma_{12}$. 
Then, by Lemma \ref{pathcoincidelemma}, $[\sigma_{02}\cdot\rho_{02}]=[\sigma\cdot\sigma_{12}\cdot\rho_{12}]$ for some simple closed curve $\sigma$ based at $p$ and disjoint from $R_2$.

Let  $\rho=\sigma_{12}^{-1}\cdot\sigma^{-1}\cdot\sigma_{02}\cdot\rho_{02}$. Then $\lk(\rho_{12})=\lk(\rho)=0$, so both curves lift to closed curves in $\widetilde{M}$. 
Let $\widetilde{\rho}_{12}$ and $\widetilde{\rho}$ be the lifts of $\rho_{12}$ and $\rho$ respectively that start at $\widetilde{p}$.
Then $[\widetilde{\rho}_{12}]\in\pi_1(U)$ and $[\widetilde{\rho}]\in\pi_1(V)$.
As $[\widetilde{\rho}_{12}]^{-1}\cdot[\widetilde{\rho}]=1$ in $\pi_1(\widetilde{M})$, Theorem \ref{freeproductthm} gives that $[\widetilde{\rho}_{12}]\in\pi_1(W)$.
Homotope $\rho_{12}$ so that $\widetilde{\rho}_{12}$ lies in $W$.
Then $\rho_{12}$ is disjoint from $R_0\cup R_1$.
Hence $\phi_{\sigma_{12}\cdot\rho_{12}}(R_2)$ is tight with respect to $R_0$ and with respect to $R_1$.
\end{proof}

\begin{remark}
The proof of Proposition \ref{threedisjointprop} actually gives a slightly stronger result. It shows that after $R'_0$ and $R'_1$ have been isotoped to be tight we can find an isotopy of $R'_2$ to make it tight with respect to both $R'_0$ and $R'_1$.
\end{remark}

\begin{theorem}\label{realisesimplexthm}
Let $(R_0,[\rho_0]),\ldots,(R_n,[\rho_n])\in X$ be the vertices of an $n$--simplex in $\ms_p(L)$.
For $0\leq i\leq n$ let $R'_i$ be a copy of $\phi_{\rho_i}(R_i)$.
Then $R'_0,\ldots,R'_n$ can be isotoped in $M_p$ to be pairwise disjoint and tight.
\end{theorem}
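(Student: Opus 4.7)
The plan is to induct on $n$, using Proposition \ref{threedisjointprop} (in its strengthened form from the remark following it) as the engine for the small cases, and handling the inductive step with a multi-surface version of the amalgamated-free-product argument that drives the proof of Proposition \ref{threedisjointprop}.

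I would begin by normalising the data. As in the proof of the dimension theorem above, after reordering I may assume $\inc_*(R_0)\leq_{R_p}\cdots\leq_{R_p}\inc_*(R_n)$ and $\lk(\rho_0)\leq\cdots\leq\lk(\rho_n)$; this sequence of linking numbers takes at most two consecutive integer values, and there is at most one index $k$ with $R_k=R_{k+1}$, which must be the jump index. By Theorem \ref{uniquepositionthm} I may fix once and for all a pairwise-disjoint, pairwise-tight configuration of representatives $S_0,\ldots,S_n$ for $\inc_*(R_0),\ldots,\inc_*(R_n)$ in $M$, which prescribes the correct downstairs picture; it then suffices to show that the paths $\rho_0,\ldots,\rho_n$ can be homotoped rel endpoints into the complement of $S_0\cup\cdots\cup S_n$, modulo at most a single meridian arc at the $\lk$-jump.

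The base case $n=1$ is immediate from Corollary \ref{adjacencycor2} combined with Corollary \ref{isotopyrelbdycor2}. For the inductive step I apply the inductive hypothesis to obtain pairwise disjoint and tight representatives $R'_0,\ldots,R'_{n-1}$ in $M_p$, and then isotope $R'_n$ to be tight with and disjoint from each of them. Lift the configuration to $\widetilde{M}$, choosing lifts $\widetilde{R}'_0,\ldots,\widetilde{R}'_{n-1}$ with common boundary; these cut out nested subregions of $\widetilde{M}$ between $\widetilde{R}'_0$ and $\tau(\widetilde{R}'_0)$. Each $R'_i$ is incompressible, so inclusion induces injections on $\pi_1$, and $\pi_1(\widetilde{M})$ decomposes as an iterated amalgamated free product along these subregions. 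Iterating Theorem \ref{freeproductthm} as in Proposition \ref{threedisjointprop} shows that the lifted based-homotopy class of $\rho_n$ (after prepending one meridian arc in the $\lk$-jump case, to arrive at a null-$\lk$ curve) lies in the $\pi_1$ of the specific subregion containing $\widetilde{p}$. Pushing this back down gives a homotopy of $\rho_n$ into the complement of $R'_0\cup\cdots\cup R'_{n-1}$, and Lemma \ref{pathpathadjacentlemma} then lets me replace $R'_n$ by $\phi_{\rho_n}(R_n)$ while preserving tightness with every $R'_i$.

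The main obstacle is the iterated amalgamation bookkeeping: at each of the $n-1$ stages I must identify the correct pair of subgroups to amalgamate, keep track of which nested subregion currently contains $\widetilde{p}$, and verify that the hypothesis of Theorem \ref{freeproductthm} applies to the expression of the lifted $\rho_n$-class as a product of an element above and an element below. This is straightforward in principle, since each step is a direct analogue of one of the two cases treated in the proof of Proposition \ref{threedisjointprop}, but the notation and the careful indexing of regions and lifts require care.
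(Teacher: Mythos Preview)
Your normalisation and the appeal to the appendix to arrange the underlying surfaces $R_0,\ldots,R_n$ pairwise tight in $M$ match the paper's opening moves. The divergence is in how the new surface $R'_n$ is handled. You propose an iterated amalgamated-free-product argument, pushing the lifted class of $\rho_n$ through $n-1$ successive applications of Theorem \ref{freeproductthm} to land it in the innermost region. The paper avoids this entirely: it applies Proposition \ref{threedisjointprop} only to the \emph{triple} $(R'_0,R'_{i-1},R'_i)$, obtaining $R'_i$ tight with the two extreme surfaces $R'_0$ and $R'_{i-1}$, and then invokes Proposition \ref{surfacesinorderprop} to conclude that $R'_i$ automatically lies in a different component of $M_p\setminus(R'_0\cup R'_{i-1})$ from each intermediate $R'_j$, hence is tight with all of them for free. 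So the paper never iterates the algebraic argument; the geometric separation does the work.

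Your iteration is where I would worry about a genuine gap. Each application of Theorem \ref{freeproductthm} needs the element written as $g_1 g_2$ with $g_i$ in the two amalgamated factors. In Proposition \ref{threedisjointprop} this decomposition comes from having two curves $\rho_{02},\rho_{12}$, one disjoint from each of the two bounding surfaces. After your first step you know the class lies in $\pi_1(W_1)$; but for the second step you need a decomposition relative to $\widetilde{R}_2$ \emph{within} $W_1$, whereas adjacency with $R'_2$ only hands you a curve $\rho_{2n}$ disjoint from $R_2$, whose lift lies in the region above $\widetilde{R}_2$ in all of $\widetilde{M}$, not confined to $W_1$. Reconciling this at every stage is exactly the bookkeeping you flag as the main obstacle, and it is not clear it goes through without further argument. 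The paper's separation trick sidesteps this completely, reducing everything to the already-established triple case.
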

\begin{proof}
We will actually isotope the surfaces so that each pair is $\partial$--almost disjoint and tight. Once this has been achieved, isotoping the boundaries of the surfaces in a neighbourhood of $\partial M$ will make the surfaces disjoint.
Assume that $\inc_*(R_0)\leq\inc_*(R_1)\leq\cdots\leq\inc_*(R_n)$ and $\lk(\rho_0)\leq\lk(\rho_1)\leq\cdots\leq\lk(\rho_n)$.

We would like it to be the case that $R_0,\ldots,R_n$ can be isotoped in $M_p$ keeping the boundary fixed to be pairwise tight. It seems difficult to prove this directly. Instead, consider the construction of the set $\mathcal{R}$. The elements of $\mathcal{R}$ were chosen using Proposition \ref{representativesprop}, the conclusions of which continue to hold if we change any $R\in\mathcal{R}$ by an isotopy in $M$ fixing the boundary.

Since $\inc_*(R_0),\ldots,\inc_*(R_n)$ span a simplex in $\ms(L)$, by Lemma \ref{alltightbdylemma} the surfaces $R_0,\ldots,R_n$ can be isotoped in $M$ keeping the boundary fixed so that after the isotopy they are pairwise tight. 
We may therefore assume that this isotopy had been performed before the choice of $R_0^*,\ldots,R_n^*$ was fixed, giving our desired result.
Note that in making this assumption we may have altered the labelling of the vertices of $\ms_p(L)$, but the complex itself is unaffected. Thus the hypotheses of Theorem \ref{realisesimplexthm} still hold, although the curves $\rho_0,\ldots,\rho_n$ may have changed.

Fix $R'_0$. 
Using Proposition \ref{threedisjointprop}, isotope $R'_1$ and $R'_2$ in $M_p$, keeping the boundary fixed, so that $R'_0$, $R'_1$ and $R'_2$ are pairwise tight.
Now isotope $R'_3$ to be tight with respect to $R'_0$ and $R'_2$.
Since, by Proposition \ref{surfacesinorderprop}, $R'_3$ and $R'_1$ lie in different components of $M_p\setminus (R'_0\cup R'_2)$, they are also tight.
Repeating this for $R'_4,\ldots,R'_n$ in order gives the required result.
\end{proof}


\section{Seifert surfaces for a split link}\label{splitlinksection}
\subsection{The general case}

Assume $L$ is a split link.
Express $L$ as a distant union of $L_1,\ldots,L_N$, where each $L_i$ is a non-split link. Then $N\geq 2$.
We will call $L_1,\ldots,L_N$ the \textit{non-split components} of $L$.
Let $R$ be an incompressible Seifert surface for $L$ (in $\Sphere$).

\begin{lemma}\label{bounddisclemma}
Let $S\subset M$ be a sphere.  
Isotope $S$ to be in general position relative to $R$, so that $S\cap R$ consists of finitely many disjoint simple closed curves.
Then each curve of $S\cap R$ bounds a disc in $R$.
\end{lemma}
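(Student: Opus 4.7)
The plan is to induct on the number of components of $S \cap R$, using only the incompressibility of $R$. Avoiding any appeal to irreducibility of $M$ is crucial, since $M$ is reducible when $L$ is split, so one cannot freely surger spheres in $M$. The base case $S \cap R = \emptyset$ is vacuous.

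For the inductive step, I would first observe that as long as $S \cap R \neq \emptyset$, there is a curve $c$ of $S \cap R$ that is innermost on $S$, meaning it bounds a disc $D \subset S$ with $\Int D \cap R = \emptyset$. This is a purely planar fact: disjoint simple closed curves on $S^2$ cut it into open discs, and the nesting tree of the arrangement has at least one leaf. Then $D$ is an embedded disc in $M$ with $D \cap R = \partial D = c$. Because $R$ is a two-sided incompressible surface properly embedded in $M$, the curve $c$ must bound a disc $D'$ in $R$.

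To carry the conclusion to all remaining curves of $S \cap R$, I would push $D'$ to one side of $R$ inside a small regular neighbourhood, obtaining an embedded disc $D''$ in $M$ with $\partial D'' = c$ and $\Int D''$ disjoint from $R$. The surgered sphere $S' = (S \setminus \Int D) \cup D''$ is then embedded in $M$ and, after a generic small isotopy to restore transversality with $R$, satisfies $S' \cap R = (S \cap R) \setminus \{c\}$. Applying the inductive hypothesis to $S'$ shows that every curve in $(S \cap R) \setminus \{c\}$ bounds a disc in $R$; combined with the disc $D'$ bounded by $c$, this proves the statement for $S$.

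I do not foresee any real obstacle. The two points that want a moment of care are the planar existence of an innermost disc (handled above) and the two-sidedness of $R$ ensuring that $D'$ admits an embedded one-sided push-off $D''$; the latter is immediate because $R$ is a Seifert surface and hence orientable. Everything else is a direct invocation of incompressibility of $R$.
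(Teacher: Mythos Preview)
Your induction scheme has a genuine gap in the surgery step. After finding the innermost disc $D\subset S$ with $\partial D=c$ and the disc $D'\subset R$ with $\partial D'=c$, you push $D'$ off $R$ to obtain $D''$ and form $S'=(S\setminus\Int D)\cup D''$. The problem is that the interior of $D'$ may contain other curves of $S\cap R$; these curves lie on $S\setminus D$ (since $\Int D$ is disjoint from $R$), so near each such curve the surface $S\setminus D$ crosses $R$ transversely and therefore meets any sufficiently close push-off $D''$ of $D'$. Hence $S'$ need not be embedded, and you cannot apply the inductive hypothesis to it. A small generic isotopy does not repair this: the intersections of $D''$ with $S\setminus D$ are transverse circles, not removable by perturbation.

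The paper fixes exactly this point. Having found $D_1=D$ and $D_2=D'$ as you do, it then passes to a curve $\rho'$ of $D_2\cap S$ that is innermost in $D_2$, bounding a subdisc $D_3\subset D_2\subset R$ whose interior is disjoint from $S$. Compressing $S$ along $D_3$ now yields two embedded spheres $S_0,S_1$, each meeting $R$ in at most $|S\cap R|-1$ curves, with $S\cap R=(S_0\cap R)\cup(S_1\cap R)\cup\{\rho'\}$. Induction on the two spheres handles every curve except $\rho'$, which already bounds $D_3$ in $R$. Your argument becomes correct once you insert this ``innermost in $D'$'' step before surgering.
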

\begin{proof}
We proceed by induction on $|S\cap R|$.
If $|S\cap R|\geq 1$, let $\rho$ be a curve of $S\cap R$ that is innermost in $S$. 
Then $\rho$ bounds a disc $D_1$ in $S$ with interior disjoint from $R$. As $R$ is incompressible, $\rho$ bounds a disc $D_2$ in $R$. 

Let $\rho'$ be a curve of $D_2\cap S$ that is innermost in $D_2$. Then $\rho'$ bounds a subdisc $D_3$ of $D_2$ that is disjoint from $S$ on its interior. Compress $S$ along $D_3$. This gives two spheres $S_0,S_1$, each of which have at most $|S\cap R|-1$ curves of intersection with $R$.
Note that $S\cap R$=$(S_0\cap R)\cup (S_1\cap R)\cup \rho'$.
Inductively, for $i=1,2$, every curve of $S_i\cap R$ bounds a disc in $R$. 
\end{proof}

\begin{lemma}\label{separatelinkcomponentslemma}
There is a sphere $S$ disjoint from $R$ that separates $L_2\cup \cdots\cup L_N$ from $L_1$.
\end{lemma}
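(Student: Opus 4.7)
The plan is to induct on $N$, the number of non-split components of $L$. The underlying mechanism at every stage is the same: choose a sphere with the desired separation property, minimise its intersection with $R$, and combine Lemma \ref{bounddisclemma} with an innermost-disc argument to drive the intersection down to zero.

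For the base case $N=2$, I take a sphere $S\subset M$ separating $L_1$ from $L_2$ (which exists because $L$ is split) and minimise $|S\cap R|$ over all such separating spheres. If $|S\cap R|>0$, Lemma \ref{bounddisclemma} produces a disc $D_R\subset R$ innermost in $R$ whose boundary lies on $S$ and whose interior is disjoint from $S$, hence contained in one of the two open balls $B_1,B_2\subset\Sphere$ cut off by $S$ (with $L_i\subset B_i$). The disc $D_R$ subdivides that ball into two sub-balls; because the $L_i$ living there is non-split, it lies entirely in one sub-ball, leaving the other disjoint from $L$ and hence contained in $M$. Isotoping a portion of $S$ across this empty sub-ball yields a new sphere that still separates $L_1$ from $L_2$ but has strictly fewer intersection curves with $R$, contradicting minimality.

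For the inductive step $N\geq 3$, naively repeating the base-case argument fails, because if the innermost disc $D_R$ lies in the ball containing $L_2\cup\cdots\cup L_N$, its two sub-balls may each contain some $L_i$, and then neither of the compressed spheres separates $L_1$ from the rest. To bypass this obstacle, I first establish the weaker claim that \emph{some} essential sphere $\Sigma\subset M$ is disjoint from $R$. Since $L$ is split, $M$ is reducible, so essential spheres exist; minimising $|\Sigma\cap R|$ over all essential spheres and applying an innermost-in-$R$ compression produces two spheres with strictly smaller intersection, at least one of which is still essential (otherwise $\Sigma$ itself would bound a ball). Such a $\Sigma$ automatically separates $L$ non-trivially into $L'\sqcup(L\setminus L')$: otherwise one component of $\Sphere\setminus\Sigma$ would be disjoint from $L$ and give a ball in $M$ bounded by $\Sigma$.

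Relabel so that $L_1\in L\setminus L'$. If $L\setminus L' = \{L_1\}$, then $\Sigma$ already does the job. Otherwise, let $B''$ be the closed ball in $\Sphere$ bounded by $\Sigma$ and containing $L\setminus L'$, and cap off $\Sigma$ by a $3$-ball to turn $B''$ into a copy of $\Sphere$ in which $L\setminus L'$ sits as a link with strictly fewer non-split components than $L$. The subsurface $R\cap B''$ is a Seifert surface for $L\setminus L'$ in this new $\Sphere$, and is incompressible: any compressing disc can be isotoped (by an innermost-curve argument on its intersection with $\Sigma$) to lie entirely in $B''$, where it would give a compressing disc for $R$ in $M$. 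The induction hypothesis then supplies a sphere $\Sigma'$ disjoint from $R\cap B''$ that separates $L_1$ from $(L\setminus L')\setminus\{L_1\}$ inside the capped ball. Arranging $\Sigma'$ to avoid the added cap, it sits in $B''\subset M$, remains disjoint from $R$, and in $M$ separates $L_1$ from $((L\setminus L')\setminus\{L_1\})\cup L' = L\setminus\{L_1\}$, as required. The main delicate point, and the reason for the indirect induction, is the sub-case in which an innermost disc on the multi-component side refuses to reduce the $L_1$-versus-rest split.
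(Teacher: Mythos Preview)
Your inductive strategy is natural and most of the ingredients are sound, but the final step of the induction has a genuine gap. After you obtain $\Sigma'\subset B''$ separating $L_1$ from $(L\setminus L')\setminus\{L_1\}$ in the capped sphere, you assert that in the original $\Sphere$ it separates $L_1$ from $L\setminus\{L_1\}$. This need not hold. The sphere $\Sigma'\subset B''$ bounds a ball $C\subset B''$, and either $L_1\subset C$ or $(L\setminus L')\setminus\{L_1\}\subset C$. In the second case, $L_1$ lies in $B''\setminus C$, which in $\Sphere$ is on the \emph{same} side of $\Sigma'$ as $L'$ (both lie in $\Sphere\setminus C$). Then $\Sigma'$ separates $(L\setminus L')\setminus\{L_1\}$ from $L_1\cup L'$, which is not what you want. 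Nothing in your argument forces $L_1$ into the inner ball $C$.

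This is repairable. In the bad case you have two disjoint spheres $\Sigma,\Sigma'$, both disjoint from $R$, bounding a region $\overline{B''\setminus C}\cong S^2\times[0,1]$ that contains exactly $L_1$ and the components $R_1$ of $R$ with $\partial R_1=L_1$. Since a meridian of $L_1$ lies in this region and meets $R_1$ algebraically once, $R_1$ is non-separating there, so an arc from $\Sigma$ to $\Sigma'$ missing $R$ exists; tubing along it produces a single sphere disjoint from $R$ that does separate $L_1$ from the rest. Alternatively, strengthen the induction hypothesis to produce a full system of $N-1$ disjoint spheres separating all the $L_i$; then the recursion goes through without this ambiguity.

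For comparison, the paper's proof avoids induction on $N$ entirely. It minimises $|S_0\cap R|$ directly over spheres separating $L_1$ from $L_2\cup\cdots\cup L_N$, and in the problematic situation (an innermost disc $D_3\subset R$ cutting off a ball $V'$ containing some $L_j$ with $j\neq1$) it constructs a path $\sigma$ running parallel to $R$ inside $S_0$, uses it to build a handle-slid disc $D_5$, and compresses $S_0$ along $D_3$ and then $D_5$ to get a new separating sphere with fewer intersections. This path-and-compress trick handles in one stroke exactly the multi-component obstruction that forced you into the induction.
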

\begin{proof}
Among all spheres that separate $L_2\cup \cdots\cup L_N$ from $L_1$, choose a sphere $S_0$ that has
minimal intersection with $R$. 
Let $V\subset\Sphere$ be the 3--ball bounded by $S_0$ containing $L_1$. We will view $V$ as being inside $S_0$.
Assume that $S_0\cap R\neq\emptyset$.
Let $\rho$ be a curve of $S_0\cap R$ that is innermost in $S_0$. Then $\rho$ bounds a disc $D_1$ in $S_0$ and a disc $D_2$ in $R$ as in Lemma \ref{bounddisclemma}.
Again choose $\rho'$ and $D_3$. The curve $\rho'$ bounds a disc $D_4$ in $S_0$ that does not contain $D_1$. The sphere $D_3\cup D_4$ bounds a 3--ball $V'\subset\Sphere$ with interior disjoint from $S_0$. Since $S_0\cap R$ is minimal, $V'$ must contain at least one non-split component of $L$. 
If this includes $L_1$ then $V'\subset V$ and $D_3\cup D_4$ is a sphere separating $L_1$ from $L_2\cup\cdots\cup L_N$ that has fewer curves of intersection with $R$ than $S_0$ does. This is not the case, so $L_1$ does not lie in $V'$. In particular, $V'$ lies outside of $S_0$.

Consider the component $R_0$ of $R\setminus S_0$ that meets $D_3$ along $\rho'$. As $R$ has no closed components, $R_0$ has at least two boundary components. Either $R_0$ is properly embedded in $V$ or one boundary component of $R_0$ runs along $L_1$. 
In the former case, choose a path $\sigma'$ in $R_0$ from the boundary component $\rho'$ to another boundary component. 
Let $\sigma$ be a copy of $\sigma'$ pushed off $R_0$ so that it is disjoint from $R$, remains properly embedded in $V$, and has one endpoint in the interior of $D_4$. 
In the latter case, let $\sigma'$ be a path in $R_0$ from $\rho'$ to $L_1$. Take $\sigma$ to be the path that runs immediately above $\sigma'$, around $L_1$ and then back immediately below $\sigma'$. In either case, $\sigma$ is disjoint from $R$. It also has both endpoints on the inside of $S_0$ with exactly one of them in the interior of $D_4$.
Take a neighbourhood of $\sigma$ in $V$, and let $A$ be the annulus properly embedded in $V$ that forms the boundary of this neighbourhood.
Let $D_5$ be the disc that is the union of $A$, $D_3$, and the annulus in $D_4$ between the two.

Compress $S_0$ along $D_3$, and discard the component that bounds $V'$. Now compress along $D_5$ and discard the component that is further away from $L_1$. The resulting sphere separates $L_1$ from $L_2\cup\cdots\cup L_N$ and has fewer curves of intersection with $R$ than $S_0$ does. This contradicts our choice of $S_0$. Hence $S_0$ is disjoint from $R$.
\end{proof}

From this we see that $R$ may be written as $R_1\cup\cdots\cup R_N$ where $R_i$ is an incompressible Seifert surface for $L_i$. 
For each $i$, choose a sphere $S_i$ separating $R_i$ from the other $R_j$, such that these spheres are disjoint.
We can understand the possibilities for $R_i$ within $S_i$ using the results of Section \ref{complementofpointsection} and the following lemma.

\begin{lemma}
Let $R_a$ and $R_b$ be incompressible Seifert surfaces for $L_1$ that are disjoint from $S_1$.
If $R_a$ is isotopic to $R_b$ in $M$ then $R_a$ is isotopic to $R_b$ in the complement of $S_1$.
\end{lemma}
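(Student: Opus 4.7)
The plan is to apply the product region theorem (Theorem \ref{productregionthm}) in the irreducible ambient manifold $M_{L_1} := \Sphere \setminus \nhd(L_1)$, then push the resulting product region off $S_1$ into the component $V$ of $M \setminus S_1$ containing $L_1$.

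First I would observe that $V$ sits as a submanifold of $M_{L_1}$, with $M_{L_1} \setminus V$ a closed ball (capping off $S_1$ on the side opposite $L_1$). If $L_1$ is the unknot, any Seifert surface is a disc and the conclusion is immediate, so assume otherwise; then $M_{L_1}$ is irreducible, $\partial$--irreducible and Haken. Since the hypothesised isotopy of $R_a$ to $R_b$ takes place in $M \subset M_{L_1}$, the surfaces are also isotopic in $M_{L_1}$. Theorem \ref{productregionthm} then yields a product region $T = (S_T \times I)/\mathopen{}\sim\mathclose{}$ between $R_a$ and $R_b$ in $M_{L_1}$.

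Next I would arrange $T \subseteq V$. A preliminary innermost-circle argument on $S_1$ shows that $R_a$ (hence $R_b$) remains incompressible in $M_{L_1}$. Isotope $T$ rel $\partial T$ in $M_{L_1}$ so as to minimise $|T \cap S_1|$; since $S_1$ is disjoint from $R_a \cup R_b$ and from $\partial M_{L_1}$, this intersection is a disjoint union of simple closed curves in the interior of $T$. Take a circle $c \subset T \cap S_1$ innermost on $S_1$, bounding a disc $D \subset S_1$ with $\Int(D) \cap T = \emptyset$. The inclusion $R_a \hookrightarrow T$ is a homotopy equivalence, so $\pi_1(T) \hookrightarrow \pi_1(M_{L_1})$ is injective; as $c$ bounds $D$ it is null-homotopic in $M_{L_1}$, hence in $T$, and Dehn's lemma applied inside $T$ produces an embedded disc $D_T \subset T$ with $\partial D_T = c$. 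The 2-sphere $D \cup D_T$ bounds a ball $B' \subset M_{L_1}$ by irreducibility, and an isotopy of $T$ rel $\partial T$ across $B'$ replaces a neighbourhood of $D_T$ by one of $D$, removing $c$ from $T \cap S_1$ and contradicting minimality. Iterating gives $T \cap S_1 = \emptyset$; connectedness of $T$ and $R_a \subset V$ then force $T \subset V \subset M \setminus S_1$, producing the required isotopy.

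The main obstacle I expect is the innermost-disc reduction step. While $B'$ is an unambiguous ball in $M_{L_1}$, one must check that the isotopy across $B'$ really produces a valid product region — in particular that the $S_T \times I$ fibration (with its identifications along $\rho_T$) survives the modification and that no other circles of $T \cap S_1$ get added. Handling the case where $D_T$ is itself crossed by deeper circles of $T \cap S_1$ requires iterating the reduction on the nested discs in order of containment; handling the case where $B'$ lies on the ``wrong'' side of $D_T$ in $T$ requires a careful choice of which way to push the modification. I expect these bookkeeping points to constitute the bulk of the detailed proof.
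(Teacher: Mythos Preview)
Your argument contains a dimensional error that invalidates the innermost-circle step. The product region $T=(S_T\times I)/\mathord{\sim}$ is a \emph{three}-dimensional submanifold of $M_{L_1}$, not a surface. Since $S_1$ is a $2$--sphere disjoint from $R_a\cup R_b\cup\partial M_{L_1}\supseteq\partial T$, the intersection $T\cap S_1$ is an open-and-closed subset of $S_1$; it is therefore either empty or all of $S_1$, never a union of simple closed curves. Consequently there are no circles $c$ on which to run your innermost-disc reduction, and the entire second paragraph collapses.

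What actually happens is a dichotomy: either $S_1\cap T=\emptyset$ (and you are done), or $S_1\subset\Int(T)$, in which case the ball $B$ on the far side of $S_1$ lies inside $T$ and you must produce an isotopy from $R_a$ to $R_b$ inside $T\setminus B$. The natural way to do that is to shrink $B$ to a tiny ball by an ambient isotopy supported near $B$ (hence fixing $R_a$ and $R_b$) and conjugate --- which is precisely the paper's argument, only carried out inside $T$ rather than inside $M_{L_1}$.

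The paper avoids product regions entirely. It observes that the given isotopy, being in $M$, already misses a small ball $V\subset\nhd(L_2)$; one then expands $V$ by an ambient isotopy of $M_{L_1}$ (supported near the outside of $S_1$, so fixing $R_a$ and $R_b$) until $\partial V=S_1$, and conjugates. This is a two-line argument, needs no appeal to Theorem~\ref{productregionthm} or Dehn's lemma, and sidesteps the bookkeeping you anticipated.
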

\begin{proof}
Let $M'=\Sphere\setminus\nhd(L_1)$. Let $V$ be a closed ball in $M'$ such that $V\subset\nhd(L_2)\subseteq M'\setminus M$.
Then $R_a$ is isotopic to $R_b$ in the complement of $V$.
By expanding $V$ until $\partial V=S_1$, we can modify the isotopy of $R_a$ so that $R_a$ stays disjoint from $S_1$ throughout.
\end{proof}

Let $R'=R'_1\cup\cdots\cup R'_N$ be another incompressible Seifert surface for $L$, isotoped to have minimal intersection with $S=\bigcup S_i$.

\begin{lemma}
Let $V_1$ be the 3--ball inside $S_1$.
Let $T$ be a component of $R'\cap V_1$ that does not meet $L_1$.
Then $T$ does not separate $L_1$ in $V_1$. 
\end{lemma}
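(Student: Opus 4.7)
The plan is to argue by contradiction. Suppose that $T$ separates $L_1$ in $V_1$, so that there exist components $K,K'$ of $L_1$ lying in different components $U_K,U_{K'}$ of $V_1\setminus T$. I will derive a contradiction by producing a sphere in $\Sphere\setminus L_1$ that separates $K$ from $K'$; such a sphere contradicts the non-splitness of $L_1$. Observe that any sphere $P\subset V_1$ that separates $K$ from $K'$ in $V_1$ also separates them in $\Sphere$, because the two $3$--balls bounded by $P$ in $\Sphere$ each contain exactly one of $K,K'$ (by comparing with which side of $P$ in $V_1$ contains each).

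First I would construct a closed surface $F$ in $V_1\setminus L_1$ that separates $K$ from $K'$ in $V_1$. Set $E=\overline{U_K}\cap S_1$; then $\overline{U_K}$ is a compact $3$--submanifold of $V_1$ whose boundary (after smoothing corners along $\partial T$) is $T\cup E$. Pushing $E$ slightly into $\Int(V_1)$, we obtain a closed surface $F\subset V_1\setminus L_1$ that still separates $K$ from $K'$ in $V_1$. If $T$ is a disc then $F$ is already a sphere, giving the desired contradiction at once.

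In the general case I would reduce $F$ to a sphere while preserving the separation of $K$ and $K'$, by an innermost-disc argument in the spirit of the proof of Lemma \ref{separatelinkcomponentslemma}. The tools are: Lemma \ref{bounddisclemma} (applied to $R'$), which for each boundary circle $\alpha$ of $T$ produces a disc $D_\alpha\subset R'$ with $\partial D_\alpha=\alpha$, and which by iterating the same innermost-disc reduction on $D_\alpha\cap S$ may be assumed to satisfy $\Int(D_\alpha)\cap S=\emptyset$; the minimality of $|R'\cap S|$, which forces the $3$--ball in $\Sphere$ bounded by $D_\alpha$ together with an innermost disc $D_S$ in the appropriate $S_j$ to meet $L$; and the structural fact that $V_1$ contains no component of $L$ other than those of $L_1$. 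Iterating these simplifications strictly decreases the complexity of $F$, eventually producing the required sphere in $\Sphere\setminus L_1$. The hardest part will be the case analysis inside the reduction: $D_\alpha$ may lie inside or outside $V_1$, $D_S$ may sit in $S_1$ or in some other $S_j$, and the resulting $3$--ball may contain components of $L_1$ on either side of $T$; one must carefully track which side of the modified surface contains $K$ versus $K'$ so that the separation is maintained. The essential leverage throughout is the combination of the incompressibility of $R'$ (through Lemma \ref{bounddisclemma}) and the minimality of $|R'\cap S|$, exactly as in the proof of Lemma \ref{separatelinkcomponentslemma}.
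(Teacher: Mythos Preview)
Your approach is more involved than necessary and contains a genuine gap. The key observation you miss is that $T$ is \emph{planar}. By Lemma~\ref{bounddisclemma}, each boundary circle of $T$ bounds a disc in $R'$; since $R'$ has no closed components, not all of these discs can have interior disjoint from $T$, so one of them contains $T$. Thus $T$ lies in a disc and is planar. With this in hand, the paper simply caps off the boundary circles of $T$ one at a time using innermost discs in $S_1$, pushed slightly into $V_1$. Each such disc lies in a collar of $\partial V_1$ and hence misses $L_1$, so the separation is preserved; and because $T$ stays planar throughout, the end result is a sphere in $V_1$ separating $L_1$, contradicting non-splitness.

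Your surface $F=T\cup E$ need not be a sphere even when $T$ is planar (take $T$ an annulus whose two boundary circles cobound an annulus $E$ in $S_1$; then $F$ is a torus), so a genuine reduction is required. The reduction you sketch does not work as stated: you assert that $D_\alpha\subset R'$ may be taken with $\Int(D_\alpha)\cap S=\emptyset$, but Lemma~\ref{bounddisclemma} does not provide this---passing to an innermost subdisc changes the boundary curve, so you do not obtain a disc with boundary $\alpha$. Moreover, even if such discs existed, the minimality of $|R'\cap S|$ only tells you that certain $3$--balls meet $L$; it does not produce a compressing disc for $F$ in $V_1\setminus L_1$. A positive-genus closed surface in $V_1$ does compress in $V_1$, but the compressing disc may cross $L_1$, so this route does not close without further argument. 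The planarity of $T$ is exactly what circumvents all of this.
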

\begin{proof}
The surface $T$ is properly embedded and separating in $V_1$.
By Lemma \ref{bounddisclemma}, each boundary component of $T$ bounds a disc in $R'$.
As $R'$ has no closed components, not all these discs have interiors disjoint from $T$.
Thus $T$ is contained in a disc in $R'$.

Suppose $T$ separates $L_1$ in $V_1$.
Take a component of $T\cap S_1$ that is innermost in $S_1$.
Capping off $T$ with the disc in $S_1$ and then isotoping it slightly inside $V_1$ does not change the fact that $T$ separates $L_1$.
Repeating this until there are no remaining components of $T\cap S_1$ gives a sphere in $V_1$ that separates $L_1$.
This contradicts that $L_1$ is non-split.
\end{proof}

Let $\sigma\subset M$ be a directed framed simple path properly embedded in the outside of $S_1$. 
Shrinking $S_1$ to a point turns $\sigma$ into a simple closed curve, from which we can define the map $\phi_{\sigma}$ as in Definition \ref{homeodefn}.
Recall that $\phi_{\sigma}$ is given by pushing one point around $\sigma$. 
Define an analogous map $\Phi_{\sigma}$ that is given by pushing $S_1$, with $L_1$ inside, around $\sigma$. The framing on $\sigma$ controls the rotation of $L_1$ as $S_1$ is moved along $\sigma$, and so must be chosen so that $L$ is returned to its original position.

\begin{lemma}
Changing $\sigma$ within its homotopy class relative to its endpoints does not change the isotopy class of $\Phi_{\sigma}(R)$ or of $\Phi_{\sigma}(R')$.
\end{lemma}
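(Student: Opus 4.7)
The plan is to parallel the proof of Lemma \ref{homotopelooplemma}, with the ball $V_1$ bounded by $S_1$ (and carrying $L_1$ inside it) playing the role that the single point $p$ played there, and with the local `tube' added to the surface when $V_1$ sweeps across it playing the role of the finger added to $R$ when $\rho$ crosses it. Since $R$ is disjoint from $S_1$ and, by the preceding discussion, every component of $R'\cap V_1$ is contained in a disc of $R'$ (and in particular is carried rigidly along with $V_1$ under the push), both $\Phi_\sigma(R)$ and $\Phi_\sigma(R')$ are controlled entirely by the intersections of $\sigma$ with the parts of $R$ and $R'$ lying outside $V_1$.

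First I would establish finger-move invariance: suppose $\sigma'$ is obtained from $\sigma$ by a small isotopy in $M$ (keeping endpoints on $S_1$ fixed) that introduces two new transverse intersection points of $\sigma'$ with $R$, both lying in a small ball $B$ disjoint from the other intersections. Inside $B$, $V_1$ is pushed through a disc of $R$ and immediately pulled back through the same disc. The resulting modification of $R$ in $B$ is a pair of parallel tubes which cobound a product region and hence cancel, so $\Phi_{\sigma'}(R)$ is isotopic to $\Phi_\sigma(R)$. The same local cancellation works for $R'$ (the components of $R'$ inside $V_1$ move with $V_1$ and are irrelevant to the local move in $B$). An analog of Figure \ref{pic2} makes this clear.

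Once finger-move invariance is in hand, any ambient isotopy of $\sigma$ in $M$ fixing the endpoints preserves the isotopy classes of $\Phi_\sigma(R)$ and $\Phi_\sigma(R')$, since the isotopy can be split into a step that is disjoint from $R$ (resp.\ $R'$), plus finitely many finger moves across it. As in Lemma \ref{homotopelooplemma}, to upgrade from isotopy to homotopy relative to the endpoints, put the homotopy into general position; it is then an isotopy except at finitely many instants where $\sigma$ passes through itself. Each self-crossing event can be realised by an ambient isotopy that drags the offending strand along $\sigma$, around an endpoint (using the freedom to move the endpoint briefly on $S_1$, which is a sphere and so offers plenty of room), and back, giving an isotopy in the complement of the self-intersection.

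The step I expect to require the most care is the finger-move cancellation for $R'$: one has to check that the two tubes introduced by the extra pair of crossings really do cobound a product region even though $R'\cap V_1$ may be nonempty. This is handled by noting that the whole move takes place in a small ball $B$ outside $V_1$, so the components of $R'$ inside $V_1$ play no role in the local picture; the subtlety is only cosmetic once this is recognised.
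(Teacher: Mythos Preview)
Your overall strategy for the path-homotopy part is sound and close to the paper's, but you have omitted one genuine ingredient: the framing. Recall that $\sigma$ is a \emph{framed} arc, and the framing controls the rotation of $L_1$ as the ball is pushed along; two admissible framings on the same underlying arc differ by a full rotation of $V_1$. The statement ``changing $\sigma$ within its homotopy class'' is about the underlying arc, so to conclude you must also check that replacing the framing by another admissible one does not change $[\Phi_\sigma(R')]$. The paper does this explicitly: writing a product neighbourhood $V_b\cong S_1\times[0,1]$, the effect of a full rotation is a homeomorphism $\Psi$ of $V_b$ that is the identity on $\partial V_b$ and on an arc $\{x_0\}\times[0,1]$, hence isotopic to the identity by the Alexander trick. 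Nothing in your argument covers this, and without it the lemma is not proved.

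A second, smaller point: your treatment of $R'$ differs from the paper's and is a little loose in two places. First, it is not true that every component of $R'\cap V_1$ lies in a disc of $R'$; that holds only for components not meeting $L_1$ (Lemma~3.3), and $R'_1$ itself lies in $V_1$. This does not actually harm your argument, since what you need is only that $R'\cap V_1$ is carried rigidly, but the sentence as written is incorrect. Second, your crossing-change step for $R'$ --- dragging a strand of $\sigma$ around an endpoint on $S_1$ --- is where the annuli $R'\cap V_b$ live, so the drag may introduce extra tube pairs whose cancellation you have not justified. The paper avoids this by using the product-neighbourhood decomposition $V_a\cup V_b\cup V_c$: outside, the effect is literally $\phi_{\sigma'}$ and Lemma~\ref{homotopelooplemma} applies; inside, nothing changes; and in $V_b$ a concrete disc (the bold arc in Figure~\ref{pic4}) disjoint from $\Phi_\sigma(R')$ certifies that the crossing change is harmless. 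Your direct approach can be made to work, but it needs the framing step and a cleaner account of what happens near $S_1$.
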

\begin{proof}
Choose a product neighbourhood $\nhd(S_1)$ of $S_1$. Let $V_a$ be the 3--ball inside $S_1$, containing $L_1$. Let $V_c$ be the component of $M\setminus \nhd(S_1)$ outside $S_1$, and let $V_b$ be the piece of $M$ between $V_a$ and $V_c$.
Note that $R\subset V_a\cup V_c$, while $R'\cap V_b$ is a finite number of disjoint annuli, each of which has a boundary component on each component of $\partial V_b$.

First consider $\Phi_{\sigma}(R)$. Since $R_1$ is contained in $V_a$, it is left unchanged by $\Phi_{\sigma}$. On $R_2\cup\cdots\cup R_N$, the effect of $\Phi_{\sigma}$ is the same as that of $\phi_{\sigma'}$, 
where $\sigma'$ is the simple closed curve that comes from $\sigma$ if $V_a$ is collapsed to a point.
Hence, as in Lemma \ref{homotopelooplemma}, changing $\sigma$ within its homotopy class does not change the isotopy class of $\Phi_{\sigma}(R)$.

Now consider $\Phi_{\sigma}(R')$. 
Note that $\sigma$ intersects $V_b$ only in a single arc at each of its endpoints.
By an isotopy of $R'$ we can ensure that $\sigma$ is disjoint from $R'$ in $V_b$. Then $\Phi_{\sigma}$ acts as the identity on $R'\cap\partial V_c$.
Again, within $V_a$ the surface is not changed by $\Phi_{\sigma}$, and in $V_c$ the effect of $\Phi_{\sigma}$ is the same as that of $\phi_{\sigma'}$. 

It remains to study the effect of $\Phi_{\sigma}$ on the annuli of $R'\cap V_b$.
Each annulus in $R'\cap V_b$ is pulled once around a neighbourhood of $\sigma$ in a pattern that is similar to a Dehn twist.
Figure \ref{pic4} shows this schematically.
\begin{figure}[htbp]
\centering
\input{pic4}
\caption{\label{pic4}}
\end{figure}
It is clear that an isotopy of $\sigma$ only changes $\Phi_{\sigma}(R'\cap V_b)$ by an isotopy.
The arc in bold in Figure \ref{pic4} denotes a disc properly embedded in $V_b$ that is disjoint from $\Phi_{\sigma}(R')$.
The presence of this disc shows that, as in the proof of Lemma \ref{homotopelooplemma}, we can also homotope $\sigma$ without changing the isotopy class of $\Phi_{\sigma}(R')$.

It remains only to check that changing the framing of $\sigma$ leaves $[\Phi_{\sigma}(R')]$ unchanged.
If we pick two different framings on $\sigma$, the resulting mapping classes of $M$ will differ by a solid rotation of $V_a$.
Since both framings must return $L_1$ to its original position, it is sufficient to show that rotating $V_a$ through an angle of $2\pi$ about some axis does not change the mapping class $\Phi_{\sigma}$.

Express $V_b$ as $S_1\times[0,1]$.
Let $\psi\colon S_1\times[0,1]\to S_1$ be the isotopy of $S_1$ that rotates it by $2\pi$ about the given axis,
and define $\Psi\colon V_b\to V_b$ by $\Psi(x,t)=(\psi(x,t),t)$.
The proof will be complete if we can show that $\Psi$ is isotopic to the identity on $V_b$. 

Let $x_0$ be one of the points of $S_1$ that are fixed by the rotation. Then $\Psi$ acts as the identity on $\{x_0\}\times[0,1]$, as well as on $S_1\times\{0,1\}$. 
In addition, $(S_1\times(0,1))\setminus(\{x_0\}\times(0,1))$ is an open 3--ball.
Hence, by the Alexander trick, $\Psi$ is isotopic to the identity on $S_1\times[0,1]$.
\end{proof}

\begin{proposition}\label{movehalfprop}
Suppose $R'\cap S\neq \emptyset$.
Then there is a set $J\subset\{1,\ldots,N\}$ with $|J|\leq \frac{1}{2}(N-1)$ such that the following holds.

Let $K=\{1,\ldots,N\}\setminus J$. 
There are disjoint oriented simple arcs $\sigma_j$ in $\Sphere\setminus\bigcup_{k\in K}\nhd(L_k)$, for $j\in J$, such that $\sigma_j$ is properly embedded in the outside of $\bigcup_{i\in J}S_i$ with its endpoints on $S_j$.
Let $R'_{\Phi}$ be the image of $R'$ under $\prod_{j\in J}\Phi_{\sigma_j}$.
The curves $\sigma_j$ can be chosen so that, after an isotopy, $|R'_{\Phi}\cap S|<|R'\cap S|$.
\end{proposition}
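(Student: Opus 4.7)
The plan is to find an innermost intersection disc of $R'$ in the common exterior of the spheres $S_k$, use minimality to show this disc separates the link components into two nonempty groups, and take $J$ to be the smaller group so that pushing its spheres across the disc reduces the intersection count.

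First I would pick a curve $c\subset R'\cap S$ together with a disc $D'\subset R'$ bounded by $c$ with $\Int(D')\cap S=\emptyset$ (such a disc exists by a standard innermost-curve-in-$R'$ argument combined with Lemma \ref{bounddisclemma}). Say $c\subset S_i$. Then $D'$ lies in one component of $\Sphere\setminus S$. Let $W:=\Sphere\setminus\bigcup_k\overline{V_k^{in}}$ be the common exterior of all $N$ spheres. I claim $D'\subset W$. The alternative $D'\subset V_k^{in}$ for $k\neq i$ is immediate to rule out since $\partial D'\subset S_i$ is disjoint from $V_k^{in}$. For the remaining alternative $D'\subset V_i^{in}$, an iteration of the innermost argument (passing to a smaller innermost disc inside $D'$ if necessary to ensure the bounded $3$--ball meets $R'$ only in $D'$) exhibits a $3$--ball bounded by $D'$ and a subdisc of $S_i$ that is disjoint from $L_i$ (since $L_i$ is non-split and can only lie on one side of $D'$ in $V_i^{in}$), hence disjoint from $L$, allowing an isotopy of $R'$ that reduces $|R'\cap S|$, contrary to minimality.

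Next, because $W$ is simply-connected (it is $\Sphere$ with finitely many disjoint open balls removed), the properly embedded disc $D'$ separates $W$ into two pieces $W_1$ and $W_2$. Setting $J_\ell=\{k\neq i:S_k\subset W_\ell\}$, minimality forces each $J_\ell$ to be nonempty: otherwise the corresponding $W_\ell$ would be a $3$--ball disjoint from $L$, and isotoping $D'$ across it would reduce $|R'\cap S|$. Hence $J_1$ and $J_2$ partition $\{1,\ldots,N\}\setminus\{i\}$, and I take $J$ to be whichever of the two has smaller cardinality, so that $|J|\leq\frac{1}{2}(N-1)$.

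For each $j\in J$, I would construct a framed simple arc $\sigma_j$ with endpoints on $S_j$ that leaves $S_j$ into the side $W_1$ containing $S_j$, crosses $D'$ transversely once into $W_2$, travels to a common reference point in $W_2$, and returns by crossing $D'$ once more back to $S_j$. The $\sigma_j$'s can be chosen pairwise disjoint, disjoint from $\bigcup_{j'\in J,\,j'\neq j}V_{j'}^{in}$, and disjoint from $\bigcup_{k\in K}\nhd(L_k)$, by the connectivity of the regions involved; framings are chosen so that $L$ returns to its original position. Setting $R'_{\Phi}=\bigl(\prod_{j\in J}\Phi_{\sigma_j}\bigr)(R')$, each $\Phi_{\sigma_j}$ drags the annular neighbourhood of $S_j$ in $R'$ once around $\sigma_j$ (cf.\ Figure \ref{pic4}); the combined effect transfers precisely the obstruction that prevented $D'$ from being pushed through $W_1$, so that after an isotopy the disc $D'$ in $R'_{\Phi}$ can be moved across $W_1$ onto a subdisc of $S_i$ and then off $S_i$, eliminating $c$ from the intersection with $S$.

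I expect the intersection-reduction step to be the main obstacle: one must verify that the sphere-pushes free up $D'$ without creating compensating new intersections with the spheres $S_k$ for $k\in K$ (along which the dragged annuli may pass). This is the reason for requiring each $\sigma_j$ to cross $D'$ exactly once in each direction and to be properly embedded in the outside of $\bigcup_{i'\in J}S_{i'}$: these constraints ensure that the tubes introduced by the pushes cancel algebraically against $D'$ rather than accumulating elsewhere, so that the net change $|R'_{\Phi}\cap S|-|R'\cap S|$ is strictly negative.
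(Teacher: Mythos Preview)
Your setup through the definition of $J$ is correct and matches the paper's. The gap is in the construction of the arcs $\sigma_j$ and the intersection count.

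Your arc $\sigma_j$ starts on $S_j\subset W_1$, crosses $D'$ into $W_2$, goes to a reference point, and returns by crossing $D'$ again back to $S_j$. But this entire arc lies in $W$, which you yourself noted is simply connected. Hence $\sigma_j$ is null--homotopic rel endpoints in $W$, and therefore in the outside of $S_j$ in $M$. By the lemma immediately preceding this proposition (changing $\sigma$ within its homotopy class does not change the isotopy class of $\Phi_\sigma(R')$), each $\Phi_{\sigma_j}$ acts trivially on the isotopy class of $R'$. So $R'_\Phi$ is isotopic to $R'$ and nothing has been gained: the two passages through $D'$ drag it in opposite directions and cancel. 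Your final paragraph acknowledges the need to check that no compensating intersections arise, but the problem is more basic---there is no net effect at all.

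What is missing is a way for the path to return from $W_2$ to $W_1$ without crossing $D'$ a second time. Since $D'$ separates $W$, such a return must leave $W$; the only available detour is through the inside of $S_i$ (the sphere carrying $c=\partial D'$). The paper does exactly this: it routes the path through the interior of $S_i$, running parallel to the piece $T$ of $R'$ just inside $S_i$ (as in Lemma~\ref{separatelinkcomponentslemma}), so that the path emerges on the other side of $D'$ without touching it again. Concretely, the paper pushes a single sphere $S'$ slightly inside the ball $V_1$ (so containing all $L_j$, $j\in J$) once along this path; the resulting $\Phi$ creates $2m$ new intersections with $S_i$ from the $m$ annuli of $R'$ between $\partial V_1$ and $S'$, but the image of $D_1$ now cobounds a ball with $D'_1\subset S_i$ disjoint from $L$, so an isotopy removes $\rho_1$ together with the $2m$ curves in $D'_1$, for a net change of $-1$. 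The individual $\sigma_j$ are then read off as the tracks of the $S_j$ under this single push.
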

\begin{remark}
Here $\prod_{j\in J}\Phi_{\sigma_j}$ means the composition of these functions.
\end{remark}
\begin{proof}
Choose a curve $\rho_0$ of $R'\cap S$, and let $D_0$ be the disc in $R'$ bounded by $\rho_0$.
Let $\rho_1$ be a curve of $D_0\cap S$ that is innermost in $D_0$, and let $D_1$ be the subdisc of $D_0$ bounded by $\rho_1$.

Without loss of generality, $\rho_1$ lies in $S_1$.
Note that $D_1$ is disjoint from $S_i$ for $i>1$.
The curve $\rho_1$ divides $S_1$ into two discs. Each of these together with $D_1$ forms a sphere disjoint from $L$.
By definition, each non-split component of $L$ lies on exactly one side of each such sphere.
Since $D_1$ cannot be isotoped across either of the 3--balls to reduce $|R'\cap S|$, both balls must contain at least one non-split component of $L$.
Only $L_1$ lies inside $S_1$, so $D_1$ must lie outside $S_1$.
One of the two balls contains at most half of the other non-split components of $L$. 
Let $V_1$ be this ball, and let $D'_1$ be the disc of $S_1$ bounded by $\rho_1$ that is contained in $\partial V_1$. 
Let $J=\{j:L_j\subset V_1\}$.

Let $T$ be the component of $R'\setminus S$ that meets $D_1$ along $\rho_1$.
Either $T$ has another boundary component on $S_1$ or it meets $L_1$.
As in the proof of Lemma \ref{separatelinkcomponentslemma}, there is a path $\sigma_x$ properly embedded inside $S_1$ that runs parallel to $T$ from a point $x$ near the boundary of $D'_1$ to a point $y$ on $S_1$ outside $D'_1$.

Let $S'$ be a copy of the sphere $\partial V_1$ that lies slightly inside $V_1$, so that the portion of $R'$ between $\partial V_1$ and $S'$ is $m$ annuli, where $m$ is the number of curves of $R'\cap S_1$ in the interior of $D'_1$.
We can take the start of $\sigma_x$ to lie on $S'$.
Let $\sigma_y$ be a path from $y$ to $S'$ that is disjoint from $S$.
Let $\sigma$ be the path $\sigma_x\cup\sigma_y$.
Take $\Phi$ to be a homeomorphism of $\Sphere\setminus\nhd(L)$ that results from treating $S'$ and everything inside it as a point and moving it once along the path $\sigma$. 
We require $R'$ to be moved by the isotopy, but leave each sphere $S_i$ as fixed.
That is, as we do the isotopy, we allow each component of $L$ to pass through $S$ as needed, but we do not allow it to pass through $R'$, and instead move $R'$ as needed.

Consider the image $R'_{\Phi}$ of $R'$ under $\Phi$.
This contains $2m$ new curves of intersection with $S$, as each annulus that was between $\partial V_1$ and $S'$ now runs through $D'_1$ near $x$, follows $\sigma_x$, and passes through $S_1$ again near $y$.
Apart from this, no new curves of intersection have been created.

Note that the image of $D_1$ now cobounds a ball with the subdisc $D'_1$ of $S_1$.
This can be used to isotope $D_1$ across $S_1$, removing the intersection curve $\rho_1$.
This isotopy also removes the $2m$ intersection curves in the interior of $D'_1$.
Hence we now have that $|R'_{\Phi}\cap S|=|R'\cap S|-1$. 

Take $\sigma_j$ to be the path followed by $S_j$ under the isotopy used to define $\Phi$.
We can make the $\sigma_j$ disjoint by a small perturbation, without changing the effect of $\Phi$ on $R'$.
\end{proof}

\begin{remark}
In fact our proof shows that there is a fixed $m\notin J$ such that we can take each $\sigma_j$ to be a subinterval of a simple closed curve in $\Sphere\setminus\nhd(L_m)$ that is disjoint from $S_k$ for $k\notin J\cup\{m\}$.

We had to make a choice as to which 3--ball to take as $V_1$, and we chose that containing fewer non-split components. We could have chosen on a different basis. In particular, we could instead have arranged that we did not isotope $L_1$.
\end{remark}

\begin{corollary}
For $2\leq i\leq n$ there is a simple arc $\sigma_i$ properly embedded in the outside of $S_i$ such that $\prod_{i=2}^n\Phi_{\sigma_i}(R')$ can be isotoped to be disjoint from $S$.
\end{corollary}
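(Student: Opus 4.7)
The plan is to argue by induction on $|R' \cap S|$, using Proposition \ref{movehalfprop} as the reduction. By the second paragraph of the remark following that proposition, at each application we may choose $V_1$ so that $1 \notin J$; thus every arc produced has endpoints on some $S_j$ with $j \in \{2,\ldots,N\}$, and $L_1$ is never moved. The base case $|R' \cap S| = 0$ is immediate: take each $\sigma_i$ to be a short trivial arc, so that $\Phi_{\sigma_i}$ is isotopic to the identity.

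For the inductive step, Proposition \ref{movehalfprop} yields disjoint arcs $\sigma'_j$ for $j$ in some $J \subseteq \{2,\ldots,N\}$ such that $R^* = \prod_{j \in J} \Phi_{\sigma'_j}(R')$ has $|R^* \cap S| < |R' \cap S|$. By induction there exist simple arcs $\tau_i$ properly embedded in the outside of $S_i$, for $2 \leq i \leq N$, with $(\prod_{i=2}^N \Phi_{\tau_i})(R^*)$ isotopic to a surface disjoint from $S$. For each $i$, define $\sigma_i$ as the concatenation at $S_i$ of $\sigma'_i$ (taken to be a trivial arc when $i \notin J$) with $\tau_i$, then homotope it to a simple arc; by the lemma preceding Proposition \ref{movehalfprop}, $\Phi_{\sigma_i}$ is unchanged up to isotopy under this homotopy, and $\sigma_i$ remains properly embedded in the outside of $S_i$.

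To conclude we must verify that $(\prod_{i=2}^N \Phi_{\sigma_i})(R')$ agrees up to isotopy with $(\prod_i \Phi_{\tau_i}) \circ (\prod_{j \in J} \Phi_{\sigma'_j})(R')$. The key observation is that two maps $\Phi_\sigma, \Phi_{\sigma'}$ commute up to isotopy whenever their defining arcs are disjoint in $M$, since each is realised by an isotopy supported in a small neighbourhood of its arc. Arcs from different $i$'s may be perturbed, within their homotopy classes (which does not affect the corresponding $\Phi$ up to isotopy), to be disjoint in $M$. This justifies moving each $\Phi_{\tau_i}$ past the $\Phi_{\sigma'_j}$ with $j \neq i$, and then combining $\Phi_{\tau_i} \circ \Phi_{\sigma'_i} = \Phi_{\tau_i \cdot \sigma'_i} = \Phi_{\sigma_i}$ via the concatenation identity. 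The main obstacle is precisely this commutativity-and-rearrangement step, which requires some bookkeeping but reduces to standard transversality in the $3$--manifold $M$; the termination of the iteration is automatic since $|R' \cap S|$ strictly decreases.
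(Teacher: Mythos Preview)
The paper states this corollary without proof, evidently regarding it as an immediate consequence of iterating Proposition~\ref{movehalfprop} together with the remark that follows it (which allows one to arrange $1\notin J$ at each step). Your inductive argument is exactly the natural elaboration of this, so the approach matches the paper's intent.

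One imprecision worth flagging: the assertion that $\Phi_\sigma$ is ``realised by an isotopy supported in a small neighbourhood of its arc'' is not quite right. The support of $\Phi_\sigma$ is a neighbourhood of the arc \emph{together with} the ball bounded by the relevant sphere $S_i$, since that ball is dragged along the arc. Thus for $\Phi_{\tau_i}$ and $\Phi_{\sigma'_j}$ with $i\neq j$ to have disjoint supports, it is not enough that the arcs $\tau_i$ and $\sigma'_j$ be disjoint from one another; each must also avoid the other's ball. This is not automatic from what you have said. However, since all arcs lie in $M$ and hence outside $\nhd(L)$, one may first shrink the spheres $S_i$ into small neighbourhoods of the $L_i$ and then homotope the arcs to be pairwise disjoint and disjoint from those neighbourhoods; by the lemma preceding Proposition~\ref{movehalfprop} this does not change the effect of each $\Phi$ on the isotopy class of the surface. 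With this adjustment the commutativity and rearrangement go through as you describe.
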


\begin{example}
\begin{figure}[htbp]
\centering
\psset{xunit=.21pt,yunit=.21pt,runit=.21pt}
\begin{pspicture}(1644,404.00219727)
{
\pscustom[linestyle=none,fillstyle=solid,fillcolor=lightgray]
{
\newpath
\moveto(362.00002,202.01281203)
\curveto(362.00002,135.73864205)(308.27418998,82.01281203)(242.00002,82.01281203)
\curveto(175.72585002,82.01281203)(122.00002,135.73864205)(122.00002,202.01281203)
\curveto(122.00002,268.28698201)(175.72585002,322.01281203)(242.00002,322.01281203)
\curveto(308.27418998,322.01281203)(362.00002,268.28698201)(362.00002,202.01281203)
\closepath
\moveto(922.00002,202.01281203)
\curveto(922.00002,135.73864205)(868.27418998,82.01281203)(802.00002,82.01281203)
\curveto(735.72585002,82.01281203)(682.00002,135.73864205)(682.00002,202.01281203)
\curveto(682.00002,268.28698201)(735.72585002,322.01281203)(802.00002,322.01281203)
\curveto(868.27418998,322.01281203)(922.00002,268.28698201)(922.00002,202.01281203)
\closepath
\moveto(1482.00002,202.01281203)
\curveto(1482.00002,135.73864205)(1428.27418998,82.01281203)(1362.00002,82.01281203)
\curveto(1295.72585002,82.01281203)(1242.00002,135.73864205)(1242.00002,202.01281203)
\curveto(1242.00002,268.28698201)(1295.72585002,322.01281203)(1362.00002,322.01281203)
\curveto(1428.27418998,322.01281203)(1482.00002,268.28698201)(1482.00002,202.01281203)
\closepath
}
}
{
\pscustom[linewidth=4,linecolor=gray,linestyle=dashed,dash=4 4]
{
\newpath
\moveto(802.00002,292.01281727)
\lineto(242.00002,382.01281727)
\lineto(102.00002,382.01281727)
\lineto(102.00002,22.01281727)
\lineto(242.00002,22.01281727)
\lineto(802.00002,112.01281727)
}
}
{
\pscustom[linewidth=4,linecolor=gray,linestyle=dashed,dash=12 12]
{
\newpath
\moveto(242.00002,292.01281727)
\lineto(800,239.99999727)
\lineto(1362.00002,342.01281727)
\lineto(1502.00002,342.01281727)
\lineto(1502.00002,62.01281727)
\lineto(1362.00002,62.01281727)
\lineto(800,159.99999727)
\lineto(242.00002,112.01281727)
}
}
{
\pscustom[linewidth=4,linecolor=gray]
{
\newpath
\moveto(1362.00002,292.01281727)
\lineto(802.00002,222.01281727)
\lineto(212.00002,242.01281727)
\lineto(212.00002,342.01281727)
\lineto(802.00002,262.01281727)
\lineto(1362.00002,382.01281727)
\lineto(1542.00002,382.01281727)
\lineto(1542.00002,22.01281727)
\lineto(1362.00002,22.01281727)
\lineto(802.00002,142.01281727)
\lineto(212.00002,62.01281727)
\lineto(212.00002,162.01281727)
\lineto(802.00002,182.01281727)
\lineto(1362.00002,112.01281727)
}
}
{
\pscustom[linewidth=2,linecolor=black]
{
\newpath
\moveto(42.00002,202.01281727)
\lineto(1602.00002,202.01281727)
}
}
{
\pscustom[linewidth=4,linecolor=black,fillstyle=solid,fillcolor=black]
{
\newpath
\moveto(252.00002,292.01281203)
\curveto(252.00002,286.48996453)(247.5228675,282.01281203)(242.00002,282.01281203)
\curveto(236.4771725,282.01281203)(232.00002,286.48996453)(232.00002,292.01281203)
\curveto(232.00002,297.53565953)(236.4771725,302.01281203)(242.00002,302.01281203)
\curveto(247.5228675,302.01281203)(252.00002,297.53565953)(252.00002,292.01281203)
\closepath
\moveto(252.00002,112.01281203)
\curveto(252.00002,106.48996453)(247.5228675,102.01281203)(242.00002,102.01281203)
\curveto(236.4771725,102.01281203)(232.00002,106.48996453)(232.00002,112.01281203)
\curveto(232.00002,117.53565953)(236.4771725,122.01281203)(242.00002,122.01281203)
\curveto(247.5228675,122.01281203)(252.00002,117.53565953)(252.00002,112.01281203)
\closepath
\moveto(812.00002,292.01281203)
\curveto(812.00002,286.48996453)(807.5228675,282.01281203)(802.00002,282.01281203)
\curveto(796.4771725,282.01281203)(792.00002,286.48996453)(792.00002,292.01281203)
\curveto(792.00002,297.53565953)(796.4771725,302.01281203)(802.00002,302.01281203)
\curveto(807.5228675,302.01281203)(812.00002,297.53565953)(812.00002,292.01281203)
\closepath
\moveto(812.00002,112.01281203)
\curveto(812.00002,106.48996453)(807.5228675,102.01281203)(802.00002,102.01281203)
\curveto(796.4771725,102.01281203)(792.00002,106.48996453)(792.00002,112.01281203)
\curveto(792.00002,117.53565953)(796.4771725,122.01281203)(802.00002,122.01281203)
\curveto(807.5228675,122.01281203)(812.00002,117.53565953)(812.00002,112.01281203)
\closepath
\moveto(1372.00002,292.01281203)
\curveto(1372.00002,286.48996453)(1367.5228675,282.01281203)(1362.00002,282.01281203)
\curveto(1356.4771725,282.01281203)(1352.00002,286.48996453)(1352.00002,292.01281203)
\curveto(1352.00002,297.53565953)(1356.4771725,302.01281203)(1362.00002,302.01281203)
\curveto(1367.5228675,302.01281203)(1372.00002,297.53565953)(1372.00002,292.01281203)
\closepath
\moveto(1372.00002,112.01281203)
\curveto(1372.00002,106.48996453)(1367.5228675,102.01281203)(1362.00002,102.01281203)
\curveto(1356.4771725,102.01281203)(1352.00002,106.48996453)(1352.00002,112.01281203)
\curveto(1352.00002,117.53565953)(1356.4771725,122.01281203)(1362.00002,122.01281203)
\curveto(1367.5228675,122.01281203)(1372.00002,117.53565953)(1372.00002,112.01281203)
\closepath
}
}
\end{pspicture}
\caption{\label{pic5}}
\end{figure}
Figure \ref{pic5} gives an example where it is necessary for all but one non-split component of $L$ to be moved. Here $L$ is the three-component unlink, and the Seifert surface $R'$ is a surface of revolution about the axis shown. Figure \ref{pic6} shows that it is possible that every component of $R'$ meets every $S_i$.
\begin{figure}[htbp]
\centering
\psset{xunit=.21pt,yunit=.21pt,runit=.21pt}
\begin{pspicture}(1644,404.00219727)
{
\pscustom[linestyle=none,fillstyle=solid,fillcolor=lightgray]
{
\newpath
\moveto(360,200.00000203)
\curveto(360,133.72583205)(306.27416998,80.00000203)(240,80.00000203)
\curveto(173.72583002,80.00000203)(120,133.72583205)(120,200.00000203)
\curveto(120,266.27417201)(173.72583002,320.00000203)(240,320.00000203)
\curveto(306.27416998,320.00000203)(360,266.27417201)(360,200.00000203)
\closepath
\moveto(921,201.01281203)
\curveto(921,134.73864205)(867.27416998,81.01281203)(801,81.01281203)
\curveto(734.72583002,81.01281203)(681,134.73864205)(681,201.01281203)
\curveto(681,267.28698201)(734.72583002,321.01281203)(801,321.01281203)
\curveto(867.27416998,321.01281203)(921,267.28698201)(921,201.01281203)
\closepath
\moveto(1481,201.01281203)
\curveto(1481,134.73864205)(1427.27416998,81.01281203)(1361,81.01281203)
\curveto(1294.72583002,81.01281203)(1241,134.73864205)(1241,201.01281203)
\curveto(1241,267.28698201)(1294.72583002,321.01281203)(1361,321.01281203)
\curveto(1427.27416998,321.01281203)(1481,267.28698201)(1481,201.01281203)
\closepath
}
}
{
\pscustom[linewidth=4,linecolor=gray,linestyle=dashed,dash=12 12]
{
\newpath
\moveto(800,109.99999727)
\lineto(185,49.99999727)
\lineto(185,164.99999727)
\lineto(800,189.99999727)
\lineto(1390.00002,139.99999727)
\lineto(1390.00002,79.99999727)
\lineto(800,159.99999727)
\lineto(225,124.99999727)
\lineto(225,89.99999727)
\lineto(800,144.99999727)
\lineto(1360,59.99999727)
\lineto(1490.00002,59.99999727)
\lineto(1490.00002,339.99999727)
\lineto(1360,339.99999727)
\lineto(799.99996,254.99999727)
\lineto(224.99994,309.99999727)
\lineto(224.99994,274.99999727)
\lineto(799.99996,239.99999727)
\lineto(1390.00002,319.99999727)
\lineto(1390.00002,259.99999727)
\lineto(799.99996,209.99999727)
\lineto(184.99994,234.99999727)
\lineto(184.99994,349.99999727)
\lineto(799.99996,289.99999727)
}
}
{
\pscustom[linewidth=4,linecolor=gray]
{
\newpath
\moveto(240,289.99999727)
\lineto(800,249.99999727)
\lineto(1400.00002,329.99999727)
\lineto(1400.00002,249.99999727)
\lineto(800,204.99999727)
\lineto(175,224.99999727)
\lineto(175,359.99999727)
\lineto(800,304.99999727)
\lineto(815,304.99999727)
\lineto(815,274.99999727)
\lineto(800,274.99999727)
\lineto(195,339.99999727)
\lineto(195,244.99999727)
\lineto(800,219.99999727)
\lineto(1380.00002,269.99999727)
\lineto(1380.00002,309.99999727)
\lineto(800,234.99999727)
\lineto(215,264.99999727)
\lineto(215,319.99999727)
\lineto(800,264.99999727)
\lineto(1360,349.99999727)
\lineto(1500.00002,349.99999727)
\lineto(1500.00002,49.99999727)
\lineto(1360,49.99999727)
\lineto(800,134.99999727)
\lineto(215,79.99999727)
\lineto(215,134.99999727)
\lineto(800,164.99999727)
\lineto(1380.00002,89.99999727)
\lineto(1380.00002,129.99999727)
\lineto(800,179.99999727)
\lineto(195,154.99999727)
\lineto(195,59.99999727)
\lineto(800,124.99999727)
\lineto(815,124.99999727)
\lineto(815,94.99999727)
\lineto(800,94.99999727)
\lineto(175,39.99999727)
\lineto(175,174.99999727)
\lineto(800,194.99999727)
\lineto(1400.00002,149.99999727)
\lineto(1400.00002,69.99999727)
\lineto(800,149.99999727)
\lineto(240,109.99999727)
}
}
{
\pscustom[linewidth=4,linecolor=gray,linestyle=dashed,dash=4 4]
{
\newpath
\moveto(1359.99998,289.99999727)
\lineto(799.99996,224.99999727)
\lineto(204.99994,254.99999727)
\lineto(204.99994,329.99999727)
\lineto(799.99996,269.99999727)
\lineto(989.99996,299.99999727)
\lineto(1359.99998,359.99999727)
\lineto(1510.00002,359.99999727)
\lineto(1510.00002,39.99999727)
\lineto(1360,39.99999727)
\lineto(990,99.99999727)
\lineto(800,129.99999727)
\lineto(205,69.99999727)
\lineto(205,144.99999727)
\lineto(800,174.99999727)
\lineto(1360,109.99999727)
}
}
{
\pscustom[linewidth=2,linecolor=black]
{
\newpath
\moveto(40,199.99999727)
\lineto(1601.00002,200.99999727)
}
}
{
\pscustom[linewidth=4,linecolor=black,fillstyle=solid,fillcolor=black]
{
\newpath
\moveto(250,290.00000203)
\curveto(250,284.47715453)(245.5228475,280.00000203)(240,280.00000203)
\curveto(234.4771525,280.00000203)(230,284.47715453)(230,290.00000203)
\curveto(230,295.52284953)(234.4771525,300.00000203)(240,300.00000203)
\curveto(245.5228475,300.00000203)(250,295.52284953)(250,290.00000203)
\closepath
\moveto(250,110.00000203)
\curveto(250,104.47715453)(245.5228475,100.00000203)(240,100.00000203)
\curveto(234.4771525,100.00000203)(230,104.47715453)(230,110.00000203)
\curveto(230,115.52284953)(234.4771525,120.00000203)(240,120.00000203)
\curveto(245.5228475,120.00000203)(250,115.52284953)(250,110.00000203)
\closepath
\moveto(810,290.00000203)
\curveto(810,284.47715453)(805.5228475,280.00000203)(800,280.00000203)
\curveto(794.4771525,280.00000203)(790,284.47715453)(790,290.00000203)
\curveto(790,295.52284953)(794.4771525,300.00000203)(800,300.00000203)
\curveto(805.5228475,300.00000203)(810,295.52284953)(810,290.00000203)
\closepath
\moveto(810,110.00000203)
\curveto(810,104.47715453)(805.5228475,100.00000203)(800,100.00000203)
\curveto(794.4771525,100.00000203)(790,104.47715453)(790,110.00000203)
\curveto(790,115.52284953)(794.4771525,120.00000203)(800,120.00000203)
\curveto(805.5228475,120.00000203)(810,115.52284953)(810,110.00000203)
\closepath
\moveto(1370,290)
\curveto(1370,284.4771525)(1365.5228475,280)(1360,280)
\curveto(1354.4771525,280)(1350,284.4771525)(1350,290)
\curveto(1350,295.5228475)(1354.4771525,300)(1360,300)
\curveto(1365.5228475,300)(1370,295.5228475)(1370,290)
\closepath
\moveto(1370,110.00000203)
\curveto(1370,104.47715453)(1365.5228475,100.00000203)(1360,100.00000203)
\curveto(1354.4771525,100.00000203)(1350,104.47715453)(1350,110.00000203)
\curveto(1350,115.52284953)(1354.4771525,120.00000203)(1360,120.00000203)
\curveto(1365.5228475,120.00000203)(1370,115.52284953)(1370,110.00000203)
\closepath
}
}
\end{pspicture}
\caption{\label{pic6}}
\end{figure}
\end{example}
\subsection{The case of two non-split components}

Suppose $N=2$. That is, suppose that $L$ has exactly two non-split components. Then we may discard $S_2$ and take $S=S_1$.
By Proposition \ref{movehalfprop}, $R'$ can be isotoped to be disjoint from $S$.
This means that we may define a bijection $\pi_S\colon\V(\ms(L))\to\V(\ms_p(L_1))\times\V(\ms_p(L_2))$ by dividing $\Sphere$ along $S$.

\begin{lemma}\label{fibredgivesisomorphismlemma}
If $L_2$ is fibred as a link in $\Sphere$, then $\ms(L)\cong\ms_p(L_1)$.
\end{lemma}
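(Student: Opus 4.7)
The plan is to promote the bijection $\pi_S$ from vertex sets to a simplicial isomorphism, exploiting the fact that fibredness of $L_2$ collapses the second factor to a point. Since $L_2$ is a fibred link in $\Sphere$, its fibre is the unique taut Seifert surface for $L_2$ up to ambient isotopy in $\Sphere$, so $\ms(L_2)$ is a single vertex; Lemma \ref{fibredgivesbijectionlemma} applied to $L_2$ (which is non-split and fibred) then gives that $\ms_p(L_2)$ is also a single vertex. The bijection $\pi_S\colon\V(\ms(L))\to\V(\ms_p(L_1))\times\V(\ms_p(L_2))$ therefore descends to a bijection $\psi\colon\V(\ms(L))\to\V(\ms_p(L_1))$ sending $[R]$ to $[R_1]$, where $R=R_1\cup R_2$ is the decomposition obtained after isotoping $R$ to be disjoint from $S$. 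Since both $\ms(L)$ and $\ms_p(L_1)$ are flag complexes, to upgrade $\psi$ to a simplicial isomorphism it suffices to show that $\psi$ and $\psi^{-1}$ take edges to edges.

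To set up convenient representatives, let $B_i\subset\Sphere$ be the closed 3-ball on the $L_i$-side of $S$ and write $M_i=B_i\setminus\nhd(L_i)$, so that $M=M_1\cup_S M_2$ and each $M_i$ plays the role of the punctured complement of $L_i$. Given distinct vertices $[R],[R']$ of $\ms(L)$, I would isotope them to be disjoint from $S$, decompose $R=R_1\cup R_2$ and $R'=R'_1\cup R'_2$ with $R_i,R'_i\subset M_i$, and use the triviality of $\ms_p(L_2)$ to isotope further within $M_2$ so that $R_2=R'_2$ is a chosen fibre $F$ of $L_2$. Replacing $R'_2$ by a parallel push-off $F'$ of $F$ along the fibration yields disjoint representatives $R=R_1\cup F$ and $R'=R'_1\cup F'$. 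The claim then reduces to showing that these are tight in $\ms(L)$ if and only if $R_1$ and $R'_1$ are tight in $\ms_p(L_1)$.

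The analysis takes place in the infinite cyclic cover. By van Kampen, $\pi_1(M)=\pi_1(M_1)*\pi_1(M_2)$ (since $\pi_1(S)=1$), and the linking-number map restricts on each factor to the linking map with $L_i$. Hence $\widetilde M$ is assembled from copies of the infinite cyclic covers $\widetilde{M_1}$ and $\widetilde{M_2}$, glued along the components of $\pi^{-1}(S)$, each of which is a sphere. Every lift of $R$ (respectively $R'$) sits inside a single piece and is either a lift of $R_1$ (respectively $R'_1$) inside a copy of $\widetilde{M_1}$ or a lift of $F$ (respectively $F'$) inside a copy of $\widetilde{M_2}$. On the $M_2$-side the parallel fibres $F,F'$ are automatically tight, since the region between them is a product. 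Consequently, the condition that a lift of $M\setminus R'$ meets exactly two lifts of $M\setminus R$ reduces to the analogous condition for $R_1$ and $R'_1$ in $\ms_p(L_1)$, establishing the claim in both directions.

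The hard part will be the bookkeeping around the sphere components of $\pi^{-1}(S)$, which glue the $\widetilde{M_i}$ pieces into $\widetilde M$: one must verify that they do not create spurious identifications changing the count in the adjacency criterion. This should follow from the fact that $\pi_1(S)=1$, so each component of $\pi^{-1}(S)$ is a single sphere and the $\mathbb Z$-indexing of lifts on either side matches up at every junction.
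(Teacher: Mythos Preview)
Your approach is essentially the same as the paper's: reduce $\ms_p(L_2)$ to a point via Lemma~\ref{fibredgivesbijectionlemma}, obtain the vertex bijection from $\pi_S$, and then check that adjacency is preserved in both directions. The paper's proof is extremely terse here---it simply asserts that $[R_1]$ and $[R'_1]$ are adjacent in $\ms_p(L_1)$ if and only if $[R_1\cup R_2]$ and $[R'_1\cup R_2]$ are adjacent in $\ms(L)$---whereas you attempt to justify this via the structure of $\widetilde M$ as $\widetilde{M_1}$ and $\widetilde{M_2}$ glued along the sphere lifts of $S$. That extra analysis is reasonable and can be made to work; your description of the cover is correct (there is in fact a single copy of each $\widetilde{M_i}$, glued along $\mathbb{Z}$-many spheres).

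The one genuine wrinkle is your introduction of the push-off $F'$. The paper uses the \emph{same} fibre $R_2$ for both surfaces, so that $R$ and $R'$ literally agree on $M_2$; then the count of lifts of $M\setminus R$ meeting a given lift of $M\setminus R'$ reduces immediately to the $M_1$ side. With your push-off, the direction matters: if $F'$ is pushed the wrong way relative to where $S$ sits in $M_1\setminus(R_1\cup R'_1)$, a lift of $M\setminus R'$ can meet three lifts of $M\setminus R$ rather than two, so your chosen representatives would fail to be tight. This is fixable---you just need to choose the push-off direction compatibly---but it is an unforced complication. Dropping $F'$ in favour of the shared fibre, as the paper does, removes the issue and shortens the bookkeeping you flag as the hard part.
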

\begin{proof}
There exists a taut Seifert surface $R_2$  for $L_2$ such that $\V(\ms_p(L_2))=\{[R_2]\}$. 
The map $[R_1]\mapsto [R_1\cup R_2]$ gives a bijection from $\V(\ms_p(L_1))$ to $\V(\ms(L))$.
It is also clear that $[R_1]$ and $[R'_1]$ are adjacent in $\ms_p(L_1)$ if and only if $[R_1\cup R_2]$ and $[R'_1\cup R_2]$ are adjacent in $\ms(L)$. 
\end{proof}

We now assume $L_1$ and $L_2$ are not fibred.
For $i=1,2$, let $M_i$ be the closure of the component of $M\setminus S=\Sphere\setminus(\nhd(L)\cup S)$ that meets $\partial\!\nhd(L_i)$, and let $\widetilde{M}_i$ be the infinite cyclic cover of $M_i$.

\begin{lemma}\label{onesideequallemma}
For $i=1,2$, let $R_i$ and $R'_i$ be taut Seifert surfaces for $L_i$ disjoint from $S$. Suppose that $[R_1]=[R'_1]$ in $\ms_p(L_1)$. Then $\pi_S^{-1}([R_1],[R_2])$ is adjacent to $\pi_S^{-1}([R'_1],[R'_2])$ in $\ms(L)$ if and only if $[R_2]$ is adjacent to $[R'_2]$ in $\ms_p(L_2)$.
\end{lemma}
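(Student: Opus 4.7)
Since $[R_1]=[R'_1]$ in $\ms_p(L_1)$, I would first isotope $R'_1$ onto $R_1$ by an ambient isotopy of $M_p(L_1)$ extended by the identity across $S$; the surfaces $R_2, R'_2$ are unaffected because they are disjoint from $S$. So we may assume $R_1=R'_1$ and set $R=R_1\cup R_2$, $R'=R_1\cup R'_2$. Since $\pi_1(S)=1$, $\pi_1(M)=\pi_1(M_1)\ast\pi_1(M_2)$, and $\lk$ restricts to $\lk_{L_i}$ on $\pi_1(M_i)$ (loops in $M_i$ are separated from $L_{3-i}$ by a Seifert surface for $L_{3-i}$ pushed onto the far side of $S$, so linking is zero). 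Consequently $\widetilde{M}$ is a tree of spaces: $\widetilde{M}\setminus\pi^{-1}(S)$ is a disjoint union of copies of $\widetilde{M}_1$ and $\widetilde{M}_2$ glued along $2$-sphere lifts of $S$. Crucially, every component of $\widetilde{M}_i\setminus\pi^{-1}(R_i)$ contains at most one sphere lift: two sphere lifts $\widetilde{S}, \widetilde{S}'$ in the same component would be joined by a path in $M_i\setminus R_i$ that closes (using a path in $S$) to a loop of $\lk_{L_i}$-value zero, forcing $\widetilde{S}=\widetilde{S}'$. Thus any lift $V$ of $M\setminus R'$ in $\widetilde{M}$ consists of at most two pieces: either a single $\widetilde{M}_i$-piece, or one $\widetilde{M}_1$-piece and one $\widetilde{M}_2$-piece glued along a common sphere lift.

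For the forward direction, suppose $R_2, R'_2$ are adjacent in $\ms_p(L_2)$, witnessed by a lift $V_2$ of $M_2\setminus R'_2$ in $\widetilde{M}_2$ meeting exactly two lifts $U_2^\pm$ of $M_2\setminus R_2$. Identify $\widetilde{M}_p(L_2)$ with a $\widetilde{M}_2$-vertex space $Y\subset\widetilde{M}$, and let $V, U^\pm$ be the connected components of $\widetilde{M}\setminus\pi^{-1}(R')$ and $\widetilde{M}\setminus\pi^{-1}(R)$ containing $V_2, U_2^\pm$. By the two-piece structure, $V=V_2$ or $V=V_1\cup V_2$ for some $\widetilde{M}_1$-piece $V_1$; in either case the lifts of $M\setminus R$ meeting $V$ correspond to the components of $Y\setminus\pi^{-1}(R_2)$ meeting $V_2$ (via their global extensions), which by tightness are exactly $U_2^+, U_2^-$, extending to the distinct lifts $U^+, U^-$. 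Hence $V$ meets exactly two lifts of $M\setminus R$.

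For the reverse direction, take tight representatives of $R, R'$ disjoint from $S$ (via Proposition \ref{movehalfprop}) and a witness lift $V$ of $M\setminus R'$ in $\widetilde{M}$ meeting exactly two lifts $U^+, U^-$ of $M\setminus R$. The same two-piece structure holds for $V$, and since both $U^+, U^-$ meet $V$ while the $\widetilde{M}_1$-piece of $V$ (if any) lies in only one lift of $M\setminus R$, there is a $\widetilde{M}_2$-piece $V_2\subseteq Y$ meeting both $U^+\cap Y$ and $U^-\cap Y$. Then $V_2$ is a lift of $M_2\setminus R'_2$ in $\widetilde{M}_2\cong Y$, and the lifts of $M_2\setminus R_2$ meeting $V_2$ in $Y$ are contained in the non-empty sets $U^+\cap Y, U^-\cap Y$, which are distinct (otherwise $U^+, U^-$ would share a piece and hence coincide). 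This gives exactly two lifts of $M_2\setminus R_2$ meeting $V_2$, yielding the adjacency of $R_2, R'_2$ in $\ms_p(L_2)$. The main obstacle is verifying the one-sphere-lift observation cleanly; once in place, the two-piece structure of $V$ makes both directions essentially parallel counting arguments.
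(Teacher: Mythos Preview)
The paper states this lemma without proof, treating it as immediate once the bijection $\pi_S$ and the covers $\widetilde{M}_i$ have been set up. Your argument makes explicit what the paper leaves implicit: after arranging $R_1=R'_1$, the lifts $\tau^n(V)$ of $M\setminus R$ that meet a fixed lift $V'$ of $M\setminus R'$ are determined entirely by the $M_2$-side, since on the $M_1$-side both $V'$ and each $\tau^n(V)$ restrict to lifts of the \emph{same} set $M_1\setminus R_1$, so the $M_1$-pieces agree for exactly one value of $n$, and that value is already witnessed on the $M_2$-side via the common sphere lift. The one-sphere-lift observation is correct and is the real content.

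Two points need repair. First, $\widetilde{M}$ is \emph{not} a tree of spaces. Because $\lk$ restricts surjectively to $\mathbb{Z}$ on each $\pi_1(M_i)$, the preimage $\pi^{-1}(M_i)$ is a single connected copy of $\widetilde{M}_i$; thus $\widetilde{M}$ is two vertex spaces joined by $\mathbb{Z}$-many sphere edges, with infinitely many cycles. Your argument never actually uses the tree property, so this is a misstatement rather than a gap, but it should be corrected. Second, you pass from ``lift of $M_2\setminus R'_2$'' to ``connected component of $\widetilde{M}\setminus\pi^{-1}(R')$''. In the Kakimizu definition a lift of $M\setminus R'$ is a full $\tau$-translate (a copy of $M\setminus R'$, possibly disconnected), not a connected component of the preimage; these differ when $R_1$ or $R'_2$ is disconnected, which can certainly happen for links. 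Your ``at most two pieces'' phrasing reflects this conflation --- a genuine lift always has exactly one $M_1$-piece and one $M_2$-piece joined along one sphere. Replacing ``connected component containing $V_2$'' by ``the unique lift whose restriction to $\widetilde{M}_2$ is $V_2$'' fixes the argument without further change.
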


\begin{lemma}\label{spheretoadjacencylemma}
For $i=1,2$, let $R_i$ and $R'_i$ be taut Seifert surfaces for $L_i$ disjoint from $S$ such that $[R_i]$ and $[R'_i]$ are adjacent in $\ms_p(L_i)$.
Suppose also that $R_i$ and $R'_i$ are tight and $S$ lies in a component of $M_i\setminus(R_i\cup R'_i)$ above $R_i$ and below $R'_i$ for $i=1,2$.
Then $\pi_S^{-1}([R_1],[R_2])$ and $\pi_S^{-1}([R'_1],[R'_2])$ are adjacent in $\ms(L)$. 
\end{lemma}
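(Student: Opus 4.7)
My plan is to lift everything to the infinite cyclic cover $\widetilde{M}$ and verify the adjacency condition in $\ms(L)$ by reducing to the tightness hypotheses on each side of $S$.

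First I will describe $\widetilde{M}$ near $S$. Since $S$ is simply connected and loops on $S$ have zero linking number with $L$, the preimage $\pi^{-1}(S)$ is a disjoint union of spheres $\widetilde{S}_k$ ($k\in\mathbb{Z}$) cyclically permuted by $\tau$. Any element of $\pi_1(M)$ of linking number $n$ can be written as the product of a loop in $\pi_1(M_i)$ of linking number $n$ and an element of $\pi_1(\widetilde{M})$, so $\pi^{-1}(M_i)$ is connected; as the restricted covering is the $\mathbb{Z}$-cover corresponding to $\lk$ on $\pi_1(M_i)$, this preimage is exactly the infinite cyclic cover $\widetilde{M_i}$ of $M_i$. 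Hence $\widetilde{M}$ is $\widetilde{M_1}$ glued to $\widetilde{M_2}$ along the spheres $\widetilde{S}_k$.

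Next I will set up labels on each side. Let $\widetilde{R_i}^k$ be the lifts of $R_i$ in $\widetilde{M_i}$ with $\tau(\widetilde{R_i}^k)=\widetilde{R_i}^{k+1}$, and let $U_i^k$ be the lift of $M_i\setminus R_i$ bordered by $\widetilde{R_i}^k$ and $\widetilde{R_i}^{k+1}$. Shifting indices independently on each side, arrange $\widetilde{S}_k\subset U_i^k$ for $i=1,2$, and index the lifts $\widetilde{R'_i}^k$ of $R'_i$ so that the associated lifts $U'^k_i$ of $M_i\setminus R'_i$ also satisfy $\widetilde{S}_0\subset U'^0_i$. The hypothesis that $S$ lies above $R_i$ and below $R'_i$ in a single component of $M_i\setminus(R_i\cup R'_i)$, combined with the tightness of $(R_i,R'_i)$ in $\ms_p(L_i)$, forces the alternating order $\widetilde{R_i}^0<\widetilde{R'_i}^0<\widetilde{R_i}^1<\widetilde{R'_i}^1<\cdots$ in $\widetilde{M_i}$, so that $U'^0_i$ meets exactly $U_i^0$ and $U_i^1$ among the lifts of $M_i\setminus R_i$.

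Finally I will count intersections in $\widetilde{M}$. The lifts of $M\setminus R$ are the sets $V^k=U_1^k\cup U_2^k$ glued along $\widetilde{S}_k$, and similarly $V'^k=U'^k_1\cup U'^k_2$ are the lifts of $M\setminus R'$. Since $\widetilde{M_1}$ and $\widetilde{M_2}$ meet only along $\pi^{-1}(S)$, which is codimension one, the three-dimensional intersection $V'^0\cap V^k$ reduces to $(U'^0_1\cap U_1^k)\cup(U'^0_2\cap U_2^k)$; by the previous step this is non-empty precisely for $k\in\{0,1\}$, giving the required count of two. For distinctness of $[R_1\cup R_2]$ and $[R'_1\cup R'_2]$ in $\ms(L)$: if an ambient isotopy of $M$ carried $R$ to $R'$ then, since the splitting of $L$ into $L_1\sqcup L_2$ is unique up to ambient isotopy of $\Sphere$, it would carry each $R_i$ to $R'_i$, and the lemma stated earlier (isotopic Seifert surfaces for $L_i$ disjoint from $S$ are isotopic in the complement of $S$) would give $[R_i]=[R'_i]$ in $\ms_p(L_i)$, contradicting the assumed adjacency. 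The main bookkeeping obstacle is the indexing shifts needed to match $\widetilde{S}_k\subset U_i^k\cap U'^0_i$ on both sides simultaneously, which is handled by the independent reindexings in the second step.
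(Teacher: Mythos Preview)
Your argument is essentially the paper's: decompose $\widetilde{M}$ along the lifts of $S$, write each lift of $M\setminus R$ as a union $V_1\cup V_2$ with $V_i\subset\widetilde{M_i}$, and use tightness on each side together with the above/below hypothesis to see that $V'$ meets exactly two consecutive $\tau$--translates of $V$. The paper is terser (it does not spell out the structure of $\widetilde{M}$ or the distinctness argument), but the logic is identical.

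One small slip: with your indexing conventions the hypothesis ``$S$ above $R_i$ and below $R'_i$'' forces the interleaving $\widetilde{R'_i}^{\,0}<\widetilde{R_i}^{\,0}<\widetilde{S}_0<\widetilde{R'_i}^{\,1}<\widetilde{R_i}^{\,1}$ rather than the one you wrote, so $U'^{\,0}_i$ meets $U_i^{-1}$ and $U_i^{0}$ (and $V'^{\,0}$ meets $V^k$ for $k\in\{-1,0\}$, matching the paper's $m\in\{0,-1\}$); this does not affect the conclusion.
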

\begin{proof}
Let $V$ be a lift of $M\setminus(R_1\cup R_2)$ to $\widetilde{M}$, and let $\widetilde{S}$ be the lift of $S$ in $V$.
Let $V'$ be the lift of $M\setminus(R'_1\cup R'_2)$ that contains $\widetilde{S}$. 
For $i=1,2$, let $V_i$ be the lift of $M_i\setminus R_i$ contained in $V$, and let $V'_i$ be the lift of $M_i\setminus R'_i$ contained in $V'$.
Then $V=V_1\cup V_2$ and $V'=V'_1\cup V'_2$.
Note that $R_i$ and $R'_i$ do not coincide and are tight, so $V'_i$ meets $\tau^m(V_i)$ for exactly two integers $m$. 
As $S$ lies in a component of $M_i\setminus(R_i\cup R'_i)$ above $R_i$ and below $R'_i$,  we know that $V'_i$ meets $V_i$ and $\tau^{-1}(V_i)$. 
Hence $V'$ meets $\tau^m(V)$ only when $m\in\{0,-1\}$. This shows that $R_1\cup R_2$ and $R'_1\cup R'_2$ are tight.
\end{proof}

\begin{proposition}\label{sphereinsamecomponentprop}
For $i=1,2$, let $R_i$ and $R'_i$ be taut Seifert surfaces for $L_i$ disjoint from $S$ such that $\pi_S^{-1}([R_1],[R_2])$ and $\pi_S^{-1}([R'_1],[R'_2])$ are adjacent in $\ms(L)$. 
Assume also that $[R_i]\neq[R'_i]$ in $\ms_p(L_i)$ for $i=1,2$.
Then $[R_i]$ and $[R'_i]$ are adjacent in $\ms_p(L_i)$ for $i=1,2$.

Suppose further that, for $i=1,2$, $R_i$ and $R'_i$ are tight. Then either $S$ lies in a component of $M_i\setminus(R_i\cup R'_i)$ above $R_i$ and below $R'_i$ for $i=1,2$, or $S$ lies in a component of $M_i\setminus(R_i\cup R'_i)$ below $R_i$ and above $R'_i$ for $i=1,2$.
\end{proposition}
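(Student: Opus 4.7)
The plan is to exploit a structural decomposition of $\widetilde{M}$ arising from the sphere $S$. Because $\pi_1(S)=1$ and $\lk|_{\pi_1(M_i)}=\lk_i$ is surjective, a double coset count shows $\pi^{-1}(M_i)$ is connected and homeomorphic to $\widetilde{M}_i$, so $\widetilde{M}$ is obtained from $\widetilde{M}_1$ and $\widetilde{M}_2$ by identification along the $\mathbb{Z}$-indexed family $\{\widetilde{S}_n=\tau^n\widetilde{S}_0\}$ of sphere lifts of $S$. Since $\pi_1(M_i\setminus R_i)\subseteq\ker(\lk_i)$, each component $U^{(i)}_n$ of $\widetilde{M}_i\setminus\pi^{-1}(R_i)$ is a trivial cover of $M_i\setminus R_i$, and hence has a unique sphere lift $\widetilde{S}_n$ of $S$ in its closure; the same holds for the components $U'^{(i)}_n$ of $\widetilde{M}_i\setminus\pi^{-1}(R'_i)$. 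Consequently the components of $\widetilde{M}\setminus\pi^{-1}(R_1\cup R_2)$ are exactly the sets $V_n=U^{(1)}_n\cup_{\widetilde{S}_n}U^{(2)}_n$, with $\tau V_n=V_{n+1}$, and similarly for the lifts $V'_n$ of $M\setminus(R'_1\cup R'_2)$.

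For the first assertion, after replacing $R_i,R'_i$ by isotopic copies in $M_i$ (which does not change their classes in $\ms_p(L_i)$) one may assume $R_1\cup R_2$ and $R'_1\cup R'_2$ are tight in $\ms(L)$ and disjoint; Proposition \ref{movehalfprop} permits this while maintaining disjointness from $S$. Setting $A_i=\{n\in\mathbb{Z}:U'^{(i)}_0\cap U^{(i)}_n\neq\emptyset\}$, the adjacency hypothesis gives $|A_1\cup A_2|=2$, and $\widetilde{S}_0$ lying in the closure of both $U'^{(i)}_0$ and $U^{(i)}_0$ gives $0\in A_1\cap A_2$. The crux is to exclude $|A_i|=1$. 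If $A_i=\{0\}$, then $U'^{(i)}_0$ contains no lift of $R_i$ in its interior, since using $R_i\cap R'_i=\emptyset$ a neighbourhood of any such lift would lie inside $U'^{(i)}_0$ and straddle it, forcing a second index into $A_i$. Hence $U'^{(i)}_0\subseteq U^{(i)}_0$, and the covering map gives $M_i\setminus R'_i\subseteq M_i\setminus R_i$, that is $R_i\subseteq R'_i$; combined with $R_i\cap R'_i=\emptyset$ this forces $R_i=\emptyset$, contradicting $[R_i]\neq[R'_i]$. Thus $|A_i|\geq 2$, and with $|A_1\cup A_2|=2$ this yields $|A_1|=|A_2|=2$ and $A_1=A_2$, giving adjacency of $[R_i]$ and $[R'_i]$ in $\ms_p(L_i)$.

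For the second assertion, with $R_i,R'_i$ tight we have $|A_i|=2$. Since $U'^{(i)}_0$ is connected and contains $\widetilde{S}_0$ in its closure, $A_i$ is a pair of consecutive integers containing $0$, so $A_i\in\{\{-1,0\},\{0,1\}\}$, and the equality $A_1=A_2$ forces the two choices to agree. If $A_1=A_2=\{-1,0\}$, then the lift of $R_i$ just below $\widetilde{S}_0$ lies inside $U'^{(i)}_0$ while the upper $R'_i$-boundary of $U'^{(i)}_0$ lies above $\widetilde{S}_0$; projecting, this places $S$ in a component of $M_i\setminus(R_i\cup R'_i)$ above $R_i$ and below $R'_i$, simultaneously for $i=1,2$. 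The case $A_1=A_2=\{0,1\}$ yields the symmetric configuration with $R_i$ and $R'_i$ interchanged. Any intermediate ambient isotopy in $M_i$ used to obtain these representatives can be chosen to keep the surfaces disjoint from $S$, so the component of $M_i\setminus(R_i\cup R'_i)$ containing $S$ is preserved and the conclusion transfers to the original surfaces.

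The main obstacle is the structural identification at the start --- verifying that each lift of $M_i\setminus R_i$ in $\widetilde{M}_i$ is a trivial cover of $M_i\setminus R_i$ containing a unique sphere lift of $S$, which underlies the whole decomposition $V_n=U^{(1)}_n\cup_{\widetilde{S}_n}U^{(2)}_n$. Once this is in place the argument reduces to combinatorics on the indices $A_1,A_2$.
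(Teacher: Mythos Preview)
Your structural setup---decomposing $\widetilde{M}$ along the lifts of $S$ into $\widetilde{M}_1$ and $\widetilde{M}_2$ and tracking adjacency via the index sets $A_i$---is correct and gives a clean combinatorial framework. The first assertion goes through as you describe; in fact the paper's argument here is shorter: once $R_1\cup R_2$ and the isotoped $R''_1\cup R''_2$ are tight in $M$, tightness of $R_i$ and $R''_i$ in $M_i$ is immediate from the decomposition $V_n=U^{(1)}_n\cup_{\widetilde{S}_n}U^{(2)}_n$, without the separate $|A_i|\geq 2$ argument.

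The gap is in the second assertion. Your equality $A_1=A_2$ was established only for the isotoped representatives $R''_i$ that realise tightness of the union in $M$; for the \emph{original} tight pairs $R_i,R'_i$ you must show that the value of $A_i$ is unchanged when $R''_i$ is replaced by $R'_i$. Your final sentence claims this follows because the isotopy can be kept disjoint from $S$, but that only confines the moving lift $\widetilde{R}'_i$ to the region between $\widetilde{S}_0$ and $\tau(\widetilde{S}_0)$; it does not prevent $\widetilde{R}'_i$ from landing on the wrong side of the intervening lift of $R_i$. Concretely, during the isotopy $R'_i$ may cross $R_i$, and when it does the component structure of $M_i\setminus(R_i\cup R'_i)$ changes, so the component containing $S$ is not obviously ``preserved''. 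The paper addresses exactly this point: working in the larger cover $\widetilde{M}'_i$ of $\Sphere\setminus\nhd(L_i)$, if $\widetilde{R}'_i$ were to end up in $\tau^2(V)$ rather than $\tau(V)$, then Theorem~\ref{productregionthm} yields a product region between $\widetilde{R}''_i$ and $\widetilde{R}'_i$ that is disjoint from all lifts of $S$ and hence lies in $\widetilde{M}_i$, but contains the lift $\widetilde{R}_i$; this would force $[R_i]=[R'_i]$ in $\ms_p(L_i)$, contradicting the hypothesis. That product-region step is the missing ingredient in your transfer argument.
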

\begin{proof}
Let $R''_1$ and $R''_2$ be copies of $R'_1$ and $R'_2$ respectively.
Isotope $R''_1$ and $R''_2$ in the complement of $S$ so that $R_1\cup R_2$ and $R''_1\cup R''_2$ are tight as Seifert surfaces for $L$ in $M$.
Then $R_i$ and $R''_i$ are tight in $M_i$ for $i=1,2$.
This completes the first part of the lemma.

Without loss of generality, assume that $S$ lies in a component of $M_1\setminus(R_1\cup R''_1)$ above $R_1$ and below $R''_1$.
Let $V$ be a lift of $M\setminus R$ to $\widetilde{M}$. 
Let $V''$ be the lift of $M\setminus R''$ that meets $V$ and $\tau(V)$.
Also, let $\widetilde{S}$ be the lift of $S$ contained in $V''$. 
By considering $\widetilde{M}_1$ we see that $\widetilde{S}$ lies in $\tau(V)$.
Thus $S$ also lies in a component of $M_2\setminus(R_2\cup R''_2)$ above $R_2$ and below $R''_2$.

Suppose now that $R_i$ and $R'_i$ are tight for $i=1,2$.

Let $\widetilde{R}''_i$ be the lift of $R''_i$ in $\tau(V)$, which lies between $V''$ and $\tau(V'')$.
The isotopy from $R''_i$ to $R'_i$ lifts to an isotopy from $\widetilde{R}''_i$ to a lift $\widetilde{R}'_i$ of $R'_i$.
Since the isotopy of $R''_i$ is disjoint from $S$, $\widetilde{R}'_i$ lies above $\widetilde{S}$ and below $\tau(\widetilde{S})$.
As $R'_i$ does not cross $R_i$, this means $\widetilde{R}'_i$ is either in $\tau(V)$ or in $\tau^2(V)$.

If $\widetilde{R}'_i$ is in $\tau^2(V)$ then it is separated from $\widetilde{R}''_i$ by the lift $\widetilde{R}_i$ of $R_i$ that lies between $\tau(V)$ and $\tau^2(V)$.
Let $M_i'=\Sphere\setminus\nhd(L_i)$, and let $\widetilde{M}_i'$ be the infinite cyclic cover of $M_i'$.
Note that $M_i\subset M\subset M_i'$ and $\widetilde{M}_i\subset\widetilde{M}\subset\widetilde{M}_i'$.
Because $\widetilde{R}''_i$ and $\widetilde{R}'_i$ are isotopic in $\widetilde{M}_i'$, by Theorem \ref{productregionthm} there is a product region $T$ between them in $\widetilde{M}_i'$. This product region does not contain any lift of $S$, so $T\subset\widetilde{M}_i$, but $T$ does contain $\widetilde{R}_i$. Hence $\widetilde{R}_i$ is isotopic to $\widetilde{R}'_i$ in $\widetilde{M}_1$, contradicting that $R_i$ is not isotopic to $R'_i$ in $M_1$.

Therefore $\widetilde{R}'_i$ is in $\tau(V)$.
As $\widetilde{R}'_i$ is above $\widetilde{S}$, this means $S$ lies in a component of $M_i\setminus(R_i\cup R''_i)$ above $R_i$ and below $R''_i$.
\end{proof}

\begin{definition}[\cite{MR0050886} Chapter II Definition 8.7]
A simplicial complex $\mathcal{X}$ is \textit{ordered} if there is a binary relation $\leq$ on the vertices of $\mathcal{X}$ with the following properties.
\begin{itemize}
\item $(u\leq v\textrm{ and }v\leq u)\Rightarrow u=v$.
\item If $u,v$ are distinct, $(u\leq v\textrm{ or }v\leq u)\Leftrightarrow u\textrm{ and }v $ are adjacent.
\item If $u,v,w$ are vertices of a 2--simplex then $(u\leq v\textrm{ and }v\leq w)\Rightarrow u\leq w$.
\end{itemize}
\end{definition}

\begin{definition}[\cite{MR0050886} Chapter II Definition 8.8]
Let $\mathcal{X}_1,\mathcal{X}_2$ be ordered simplicial complexes. We define the simplicial complex $\mathcal{X}_1\times\mathcal{X}_2$. Its vertices are given by the set $\V(\mathcal{X}_1)\times\V(\mathcal{X}_2)$. Vertices $(u_0, v_0),\ldots, (u_n, v_n)$ span an $n$--simplex if the following hold.
\begin{itemize}
\item $\{u_0,\ldots,u_n\}$ is an $m$--simplex of $\mathcal{X}_1$ for some $m\leq n$.
\item $\{v_0,\ldots,v_n\}$ is an $m$--simplex of $\mathcal{X}_2$ for some $m\leq n$.
\item The relation defined by $(u,v) \leq (u',v') \Leftrightarrow (u\leq u'\textrm{ and }v \leq v')$ gives a total linear order on $(u_0, v_0),\ldots, (u_n, v_n)$.
\end{itemize}
\end{definition}

\begin{proposition}[\cite{MR0050886} Chapter II Lemma 8.9]\label{productorderedcomplexprop}
The map $|\mathcal{X}_1\times\mathcal{X}_2| \to |\mathcal{X}_1|\times|\mathcal{X}_2|$ induced by projection is a homeomorphism.
\end{proposition}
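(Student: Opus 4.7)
The plan is to reduce to the case of a single pair of simplices and invoke the classical staircase (shuffle) triangulation of a product of simplices. The projection map is well-defined and continuous by construction: on a simplex $\{(u_0,v_0),\ldots,(u_n,v_n)\}$ of $\mathcal{X}_1\times\mathcal{X}_2$, arranged in the total linear order from the definition, a point with barycentric coordinates $(\lambda_0,\ldots,\lambda_n)$ is sent to the pair $(\sum_i\lambda_i u_i,\sum_i\lambda_i v_i)\in|\mathcal{X}_1|\times|\mathcal{X}_2|$. The content of the proposition is bijectivity plus openness.

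First I would fix simplices $\sigma=\{u_0<\cdots<u_p\}$ of $\mathcal{X}_1$ and $\tau=\{v_0<\cdots<v_q\}$ of $\mathcal{X}_2$ and analyse the restriction to $|\sigma|\times|\tau|$. The simplices of $\mathcal{X}_1\times\mathcal{X}_2$ all of whose vertices lie in $\V(\sigma)\times\V(\tau)$ are, by the definition of the product, exactly the totally ordered chains in the product order on $\V(\sigma)\times\V(\tau)$; the maximal chains are monotone lattice paths of length $p+q$ from $(u_0,v_0)$ to $(u_p,v_q)$, of which there are $\binom{p+q}{p}$. Gluing the corresponding $(p+q)$-simplices along their common faces recovers the standard staircase triangulation of $\Delta^p\times\Delta^q$; the proposition on this piece then amounts to the classical statement that this triangulation gives a PL homeomorphism onto the product.

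To finish, I would construct an explicit inverse on $|\sigma|\times|\tau|$. Given $x=\sum_i s_i u_i$ and $y=\sum_j t_j v_j$, form the two partitions of $[0,1]$ with breakpoints at the partial sums $\sum_{i'\leq i} s_{i'}$ and $\sum_{j'\leq j} t_{j'}$, and take their common refinement. The $k$-th refined subinterval lies inside some $s$-interval indexed by $i_k$ and some $t$-interval indexed by $j_k$; the subintervals' lengths then serve as barycentric coordinates on the chain $(u_{i_0},v_{j_0})\leq\cdots\leq (u_{i_m},v_{j_m})$, which is totally ordered in the product order by construction. Checking that this assignment inverts the projection is a direct calculation. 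Finally, because $|\mathcal{X}_1\times\mathcal{X}_2|$ and $|\mathcal{X}_1|\times|\mathcal{X}_2|$ both carry the weak topology with respect to their decompositions by products of simplices, the local homeomorphisms glue to a global one.

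I expect the main obstacle to be verifying continuity of the inverse across the walls of the staircase cells. When two partial sums from the $s$- and $t$-partitions coincide, two neighbouring chains collapse to a common face, and one has to check that the barycentric coordinates degenerate consistently. This is purely combinatorial and reduces to unpacking the shuffle product, but it is the step where care is required.
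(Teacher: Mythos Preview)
The paper does not give its own proof of this proposition: it is stated with a citation to Eilenberg--Steenrod (\cite{MR0050886}, Chapter II, Lemma 8.9) and used as a black box. Your sketch is precisely the classical staircase (shuffle) triangulation argument that underlies that reference, so there is nothing to compare --- your proposal is the standard proof the paper is implicitly invoking.
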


\begin{definition}\label{orderdefn}
For $i=1,2$, given taut Seifert surfaces $R_i$ and $R'_i$ for $L_i$ in $M_i$ such that $[R_i]$ and $[R'_i]$ are adjacent in $\ms_p(L_i)$, isotope them to be tight in $M_i$, and
say $R_i>_i R'_i$ if $S$ lies in a component of $M_i\setminus(R_i\cup R'_i)$ above $R_i$ and below $R'_i$.

For $i=1,2$ define $\leq_i$ on $\V(\ms_p(L_i))$ by $[R_i]\leq_i[R'_i]$ if and only if either $[R_i]=[R'_i]$ or $R'_i>_i R_i$.
\end{definition}

\begin{lemma}
$(\ms_p(L_i),\leq_i)$ is an ordered simplicial complex.
\end{lemma}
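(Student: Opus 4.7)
The plan is to verify the three axioms in turn by working in the infinite cyclic cover $\widetilde{M}_i$ and tracking the position of a chosen lift $\widetilde{S}$ of the sphere $S$ among the lifts of tight representatives. First I would check that $>_i$ genuinely descends to classes: if $(R,R')$ and $(\bar R,\bar R')$ are two tight configurations representing the same pair of adjacent classes in $\ms_p(L_i)$, an ambient isotopy between them lifts to $\widetilde{M}_i$ and carries $\widetilde{S}$ to a lift of $S$ sitting in the same ``slot'' of the interleaved arrangement of lifts (up to translation by $\tau$), so $R>_i R'$ iff $\bar R>_i \bar R'$. The underlying principle is the one used in the proof of Proposition \ref{sphereinsamecomponentprop}.

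Antisymmetry is then direct: if $[R]\neq [R']$, the simultaneous relations $R>_i R'$ and $R'>_i R$ would place $S$ in two distinct components of $M_i\setminus(R\cup R')$ after tightening, which is impossible. For the characterisation ``comparable iff adjacent'' on distinct classes, one direction is by definition (since $>_i$ is only defined for adjacent pairs). For the other, given adjacent $[R]\neq [R']$, tighten them and lift $S$: tightness forces $\widetilde{S}$ to lie between consecutive lifts of $R$ and $R'$ in $\widetilde{M}_i$, and which of these two lifts is lower in the $\tau$-direction picks out exactly one of $R>_i R'$ or $R'>_i R$, making $[R]$ and $[R']$ comparable.

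For transitivity, let $[R^0], [R^1], [R^2]$ be distinct vertices of a $2$-simplex with $[R^0]\leq_i[R^1]\leq_i[R^2]$. The plan is to apply Theorem \ref{realisesimplexthm} to the non-split link $L_i$ inside $M_i$ (which plays the role of $M_p$ here, since $M_i$ is a link complement with a ball removed, homotopy equivalent to the once-punctured complement) to realise $R^0,R^1,R^2$ as pairwise disjoint and pairwise tight. Their lifts to $\widetilde{M}_i$ then assemble into a single total order in the $\tau$-direction, whose restriction to each pair is the usual tight interleaving. The hypotheses $[R^0]\leq_i[R^1]$ and $[R^1]\leq_i[R^2]$ locate $\widetilde{S}$ in a region bounded below by a lift of $R^2$ and above by the next lift of $R^0$; projecting gives $R^2>_i R^0$, hence $[R^0]\leq_i[R^2]$.

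The main obstacle will be the combinatorial compatibility step: showing that the three pairwise tight interleavings fit into a single coherent total order on the lifts in $\widetilde{M}_i$ when the surfaces are simultaneously pairwise disjoint. Pairwise disjointness delivered by Theorem \ref{realisesimplexthm} is what rules out the ``crossings'' that would break a global total order, and once that compatibility is in hand each axiom reduces to bookkeeping on the slot occupied by $\widetilde{S}$.
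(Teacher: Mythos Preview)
Your overall strategy for well-definedness, antisymmetry, and the ``comparable iff adjacent'' axiom is sound and matches the paper's approach (the paper simply points to the argument of Proposition \ref{sphereinsamecomponentprop} for these). The problem is in your transitivity step.

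You invoke Theorem \ref{realisesimplexthm} to realise $R^0,R^1,R^2$ as simultaneously pairwise disjoint and tight in $M_i$. But Theorem \ref{realisesimplexthm} lies in Section~2.3, which is prefaced by the standing hypothesis that \emph{every taut Seifert surface for the link is connected}. In Section~3.2 the non-split components $L_1,L_2$ are only assumed to be non-split and non-fibred; there is no connectedness assumption on their taut Seifert surfaces. So Theorem \ref{realisesimplexthm} is not available here, and your transitivity argument has a genuine gap. (The paper also remarks explicitly that the appendix material, on which Theorem \ref{realisesimplexthm} depends, is not used in Section~3.)

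The paper sidesteps the need for simultaneous tightness entirely. For transitivity it only arranges that $R_i,R'_i$ are tight and that $R'_i,R''_i$ are tight, explicitly allowing $R_i,R''_i$ not to be tight. It then lifts $S$ to $\widetilde S\subset\widetilde{M}_i$, takes the lifts $V_i,V'_i,V''_i$ of the three complements containing $\widetilde S$, and reads off from the two assumed inequalities that $V'_i$ meets $V_i,\tau(V_i)$ and $V''_i$ meets $V'_i,\tau(V'_i)$. If one had $[R''_i]\leq_i[R_i]$, then the lift $\widetilde R''_i$ bounding $V''_i$ from below could be isotoped in $\widetilde M_i$ into $\tau^{-1}(V_i)$; but that would force $R''_i$ to be isotopic to $R_i$ or to $R'_i$, contradicting distinctness. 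This argument uses only pairwise tightness for two of the three pairs and works without any connectedness hypothesis.
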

\begin{proof}
It follows from the proof of Proposition \ref{sphereinsamecomponentprop} that Definition \ref{orderdefn} gives a well-defined binary relation on $\V(\ms_p(L_i))$, and that if $[R_i]\leq_i[R'_i]\leq_i[R_i]$ then $[R_i]=[R'_i]$.
For $[R_i]\neq[R'_i]$, the relation $\leq_i$ is defined precisely when $[R_i]$ and $[R'_i]$ are adjacent in $\ms_p(L_i)$.

Suppose $[R_i], [R'_i],[R''_i]$ are the vertices of a 2--simplex in $\ms_p(L_i)$, and that $[R_i]\leq_i[R'_i]\leq[R''_i]$.
Without loss of generality, $R_i$ and $R'_i$ are tight, as are $R'_i$ and $R''_i$.
Let $\widetilde{S}$ be a lift of $S$ to $\widetilde{M}_i$, and
let $V_i$, $V'_i$ and $V''_i$ be the lifts to $\widetilde{M}_i$ of $M_i\setminus R_i$, $M_i\setminus R'_i$ and $M_i\setminus R''_i$ respectively that meet $\widetilde{S}$.
Then $V'_i$ meets $V_i$ and $\tau(V_i)$, while $V''_i$ meets $V'_i$ and $\tau(V'_i)$.

It may be that $R_i$ and $R''_i$ are not tight, but they can be isotoped to be tight.
Let $\widetilde{R}''_i$ be the lift of $R''_i$ such that $V''_i$ lies between $\widetilde{R}''_i$ and $\tau(\widetilde{R}''_i)$.
If $[R''_i]\leq_i[R_i]$ then $\widetilde{R}''_i$ can be isotoped in $\widetilde{M}_i$ into $\tau^{-1}(V_i)$. This contradicts that $R''_i$ is not isotopic to $R_i$ and that $R''_i$ is not isotopic to $R'_i$.
Hence, instead, $[R_i]\leq_i[R''_i]$.
\end{proof}

\begin{theorem}\label{productcomplexthm}
$|\ms(L)|\cong|\ms_p(L_1)|\times|\ms_p(L_2)|$.
\end{theorem}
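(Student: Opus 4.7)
The plan is to invoke Proposition \ref{productorderedcomplexprop} for the ordered complexes $(\ms_p(L_1),\leq_1)$ and $(\ms_p(L_2),\leq_2)$, which reduces the statement to constructing a simplicial isomorphism $\ms(L)\cong\ms_p(L_1)\times\ms_p(L_2)$. The vertex bijection $\pi_S$ is already in hand; everything hinges on matching simplices, and since both sides are flag, the real work is at the edge level.

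For edges, I would argue that distinct vertices $v=\pi_S^{-1}([R_1],[R_2])$ and $w=\pi_S^{-1}([R'_1],[R'_2])$ of $\ms(L)$ are adjacent if and only if $\pi_S(v)$ and $\pi_S(w)$ are comparable in the product order on $\V(\ms_p(L_1))\times\V(\ms_p(L_2))$. Splitting on whether some $[R_i]=[R'_i]$: Lemma \ref{onesideequallemma} handles the case where one coordinate coincides, turning adjacency in $\ms(L)$ into adjacency in $\ms_p(L_{3-i})$; Proposition \ref{sphereinsamecomponentprop} and Lemma \ref{spheretoadjacencylemma} handle the case where both coordinates differ, showing in the forward and reverse direction respectively that adjacency in $\ms(L)$ is equivalent to simultaneous adjacency in the two $\ms_p(L_i)$ with $S$ on matching sides. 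Definition \ref{orderdefn} translates ``matching sides'' precisely into the product-order inequality.

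For higher simplices, use flagness twice. If $v_0,\ldots,v_n$ span a simplex of $\ms(L)$ with images $p_j=([R_1^j],[R_2^j])$, then pairwise adjacency in $\ms(L)$ implies pairwise adjacency or equality of the projections $[R_i^j]$ in $\ms_p(L_i)$, so by flagness of $\ms_p(L_i)$ these span a simplex; pairwise comparability of the $p_j$ under the product order also follows from the edge correspondence. Antisymmetry of $\leq_i$ and transitivity of $\leq_i$ inside each 2-subsimplex of the projected simplex (the latter guaranteed by the ordered-complex axiom established for $\ms_p(L_i)$) then promote pairwise comparability to the total linear order required by the product-simplex definition. Conversely, a simplex in the product restricts to edges, each of which pulls back under $\pi_S^{-1}$ to an edge of $\ms(L)$, so by flagness of $\ms(L)$ the preimage is a simplex. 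The main obstacle I expect is this last transitivity/linearity step: checking that the pairwise comparabilities inherited from the edge correspondence genuinely assemble into a global linear order, for which I rely crucially on applying the ordered-complex axioms of $(\ms_p(L_i),\leq_i)$ along the projected sub-simplices.
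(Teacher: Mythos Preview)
Your proposal is correct and follows essentially the same route as the paper: invoke Proposition \ref{productorderedcomplexprop}, then verify via $\pi_S$ that adjacency in $\ms(L)$ corresponds to comparability in the product order, splitting into the cases handled by Lemma \ref{onesideequallemma} (one coordinate equal) and by Lemma \ref{spheretoadjacencylemma} together with Proposition \ref{sphereinsamecomponentprop} and Definition \ref{orderdefn} (both coordinates differ). Your explicit treatment of higher simplices via flagness and the ordered-complex axioms spells out what the paper's ``it suffices'' leaves implicit, but the argument is the same.
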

\begin{proof}
By Proposition \ref{productorderedcomplexprop}, it suffices to show that $\pi_S^{-1}([R_1],[R_2])$ is adjacent to $\pi_S^{-1}([R'_1],[R'_2])$ in $\ms(L)$ if and only if either $[R_i]\leq_i[R'_i]$ for $i=1,2$ or $[R'_i]\leq_i[R_i]$ for $i=1,2$.

First suppose $[R_1]=[R'_1]$ and $[R_2]\neq[R'_2]$.
By Lemma \ref{onesideequallemma}, $\pi_S^{-1}([R_1],[R_2])$ is adjacent to $\pi_S^{-1}([R'_1],[R'_2])$ in $\ms(L)$ if and only if $[R_2]$ is adjacent to $[R'_2]$ in $\ms_p(L_2)$, which is the case if and only if either $[R_2]\leq_2[R'_2]$ or $[R'_2]\leq_2[R_2]$.

Suppose instead that $[R_1]\neq[R'_1]$ and $[R_2]\neq[R'_2]$.
Since the partial orders $\leq_1$ and $\leq_2$ are well defined, we may assume that $R_1$ and $R'_1$ are tight, as are $R_2$ and $R'_2$.
The result now follows from Lemma \ref{spheretoadjacencylemma}, Proposition \ref{sphereinsamecomponentprop} and Definition \ref{orderdefn}.
\end{proof}

\begin{remark}
If $L_1$ is fibred then $\ms(L)$ is a single vertex, so $|\ms_p(L_1)|$ is a point by Lemma \ref{fibredgivesbijectionlemma}.
Hence Lemma \ref{fibredgivesisomorphismlemma} shows that
Theorem \ref{productcomplexthm} also holds if either $L_1$ or $L_2$ is fibred.
\end{remark}


\appendix
\section{Making surfaces simultaneously tight}

Assume $L$ is non-split and non-fibred. 
We will see that surfaces in pairwise adjacent vertices of $\ms(L)$ can be made simultaneously pairwise disjoint and tight. Moreover, once this has been done the relative positions of the surfaces are essentially unique.

\begin{lemma}[\cite{2011arXiv1109.0965B} Lemma 4.11]\label{arcintersectionlemma}
Let $R_1, R_2$ be fixed, almost disjoint, taut Seifert surfaces for $L$. Then $R_1,R_2$ demonstrate that their isotopy classes are at most distance 1 apart in $\ms(L)$ if and only if the following holds for all pairs $(x,y)$ of points of $R_1\setminus R_2$. 

Choose a product neighbourhood $\nhd(R_1)=R_1\times[-1,1]\subset \Sphere\setminus\nhd(L)$ for $R_1$ with $R_1=R_1\times\{0\}$, such that $R_1\times\{1\}$ lies on the positive side of $R_1$. Let $\rho\colon I\to M$ be any path with $\rho(0)=(x,1)$ and $\rho(1)=(y,-1)$ that is disjoint from $R_1$ and transverse to $R_2$. 
Then the algebraic intersection number of $\rho$ and $R_2$ is 1.
\end{lemma}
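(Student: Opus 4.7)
The approach is to lift $\rho$ to the infinite cyclic cover $\widetilde{M}$ and count signed intersections with the lifts of $R_2$ there. Since $\rho$ is disjoint from $R_1$, any lift $\widetilde{\rho}$ of $\rho$ stays inside a single connected component $V$ of $\pi^{-1}(M\setminus R_1)$. Because $(x,1)$ lies just above $R_1$ and $(y,-1)$ just below it, $\widetilde{\rho}$ begins near one $R_1$-boundary piece of $V$ and ends near another; consequently the algebraic intersection number of $\rho$ with $R_2$ equals the net signed count of crossings of $\widetilde{\rho}$ with those lifts of $R_2$ that lie inside $V$.

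For the forward direction, suppose $R_1,R_2$ demonstrate distance at most $1$. The almost-disjoint hypothesis implies that each connected lift of $R_2$ is contained in a single component of $\pi^{-1}(M\setminus R_1)$, and $\tau$-equivariance forces the count of such $R_2$-lifts inside each $V$ to be independent of $V$. The two-lifts condition on a chosen lift $W$ of $M\setminus R_2$ then forces this count to be exactly one, and forces the unique $R_2$-lift inside $V$ to separate the two $R_1$-boundaries of $V$, with one of them adjacent to $W$ and the other to $\tau^{-1}(W)$. Thus $\widetilde{\rho}$ crosses the $R_2$-lift algebraically once, and a sign check gives $\rho\cdot R_2 = 1$. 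The case that $R_1$ and $R_2$ already coincide is separate and easier.

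For the backward direction I argue the contrapositive. Failure of $R_1$, $R_2$ to demonstrate distance at most $1$ means that some lift $V$ of $M\setminus R_1$ either contains no lift of $R_2$ separating its two $R_1$-boundaries (so both boundaries lie in the same lift of $M\setminus R_2$), or contains two or more such separating lifts of $R_2$ (so some lift of $M\setminus R_2$ meets three or more lifts of $M\setminus R_1$). In either case I construct a path $\widetilde{\rho}$ in $V$ running from near one $R_1$-boundary piece to near another, chosen so that its signed crossing count with the $R_2$-lifts in $V$ is not $1$ (zero in the first situation, or produced by judicious choice of endpoints in the second); projecting $\widetilde{\rho}$ to $M$ and reading off $x,y$ from the images of its endpoints on $R_1$ gives the counterexample.

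The main technical obstacle is bookkeeping under the almost-disjoint (as opposed to strictly disjoint) hypothesis, where components of $R_1$ and $R_2$ may coincide. Such shared components lift to surfaces that sit on $R_1$-boundaries of the $V$'s, complicating both the list of components of $\pi^{-1}(M\setminus R_1)$ and the location of the non-shared $R_2$-lifts relative to each $V$. The unique-lift-per-$V$ analysis of the forward direction, and the construction of the witness $\widetilde{\rho}$ in the backward direction, must be applied to the non-shared components with care, and one should also verify that changing the chosen product neighbourhood $\nhd(R_1)$ does not affect the stated intersection count.
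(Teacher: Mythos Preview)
The paper does not give its own proof of this lemma: it is quoted verbatim as Lemma~4.11 of \cite{2011arXiv1109.0965B} and used as a black box in the proof of Proposition~\ref{alltightprop}. So there is no in-paper argument to compare against.

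That said, your approach---lift $\rho$ to the infinite cyclic cover, observe that the lift stays inside a single fundamental domain $V$ for the $\tau$--action because $\rho$ misses $R_1$, and then read off $\rho\cdot R_2$ as the difference in ``$R_2$--height'' between the two ends of $\widetilde\rho$---is exactly the natural one and is almost certainly what the cited reference does. The forward direction is essentially complete once you note that the endpoint $(x,1)$ (positive side of $R_1$) lifts into $V$ adjacent to the lower bounding copy of $R_1$, while $(y,-1)$ lifts adjacent to the upper copy $\tau(\widetilde R_1)$; the two-lift condition then forces the $R_2$--heights of these endpoints to differ by exactly one, with the correct sign.

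Your treatment of the converse and of the almost-disjoint bookkeeping is honest but underspecified. Two points deserve more care. First, when $R_1$ is disconnected, $M\setminus R_1$ need not be connected, and you should check that ``a lift of $M\setminus R_1$'' in the sense used here (a $\tau$--fundamental domain) still has a well-defined ``top'' and ``bottom'' $R_1$--boundary relative to which your endpoint analysis goes through; it does, but this uses that any Seifert surface is Poincar\'e dual to the class defining the cover. Second, in the contrapositive direction your phrase ``produced by judicious choice of endpoints'' hides the actual construction: you need to exhibit components of $R_1\setminus R_2$ lying on the same side of some lift of $R_2$ inside $V$ (giving intersection number $0$) or separated by at least two such lifts (giving intersection number $\geq 2$), and then take $x,y$ on those components. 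Since the paper itself defers to the citation, filling in these details would constitute an independent proof rather than a comparison.
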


\begin{proposition}\label{alltightprop}
Let $[R_0],\ldots,[R_n]$ be the distinct vertices of a simplex of $\ms(L)$, and let $[S]$ be any vertex.
Then the surfaces $R_0,\ldots,R_n,S$ can be positioned such that $R_i,R_j$ are disjoint and tight for $i\neq j$, and such that $R_i,S$ are almost transverse with simplified intersection for each $i$, except that any component of $R_i$ that is parallel to a component of $S$ lies below that of $S$ rather than coinciding with it. 
\end{proposition}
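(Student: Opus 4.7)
The plan is to proceed in two phases. First, place $R_0, \ldots, R_n$ into a simultaneously pairwise disjoint and pairwise tight position; second, isotope $S$ to achieve almost transverse, simplified intersection with each $R_i$, handling parallel components by a small final perturbation.

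For the first phase, I would induct on $n$. The cases $n\leq 1$ follow immediately from the definition of adjacency in $\ms(L)$ and Lemma \ref{productregionsidelemma}. Inductively, with $R_0,\ldots,R_{n-1}$ already pairwise disjoint and tight, bring in $R_n$ and isotope it (keeping $\partial R_n$ fixed) to minimize $|R_n\cap(R_0\cup\cdots\cup R_{n-1})|$ within its isotopy class. If this minimum were positive, Corollary \ref{isotopyrelbdycor2} would produce a product region $T$ between $R_n$ and some $R_i$ with $\rho_T=\partial T$; iteratively applying Proposition \ref{surfaceinproductprop} to each $R_j$ entering $T$ would yield an innermost sub-product region disjoint from every $R_j$ with $j\neq i$, and pushing $R_n$ across this sub-region would reduce the intersection count, a contradiction. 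Tightness of the resulting disjoint pairs $(R_i,R_n)$ is then verified using Lemma \ref{arcintersectionlemma}.

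For the second phase, with $R_0,\ldots,R_n$ held fixed, isotope $S$ within its isotopy class (keeping its boundary fixed) to minimize $|S\cap(R_0\cup\cdots\cup R_n)|$. Suppose some pair $(R_i,S)$ bounds a product region $T$. By the same innermost-region trick applied to each $R_j\cap T$ with $j\neq i$, reduce to the case that $T$ is disjoint from every other $R_j$; using $T$ to push $S$ across then reduces $|S\cap R_i|$ without disturbing any $|S\cap R_j|$, contradicting minimality, except in the degenerate case where $T$ has whole-component horizontal boundary, forcing a component of $R_i$ to be parallel to a component of $S$. In that excepted case I would perturb the parallel component of $R_i$ slightly in the negative normal direction; because this perturbation is supported in an arbitrarily small product neighbourhood of that component, it preserves both disjointness and tightness with every other $R_j$.

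The principal obstacle lies in the second phase when $R_j$ enters the product region $T$ between $R_i$ and $S$ through $S\cap T$ rather than through $\partial M$, since Proposition \ref{surfaceinproductprop} as stated requires $R_j$ to avoid the top horizontal boundary. The plan here is to select an innermost component $C$ of $R_j\cap T$ with respect to its boundary on $S\cap T$, use the product structure of $T$ to trap a sub-region of $T$ bounded partly by $C$ and partly by subsurfaces of $R_i$ and $S$, and then iterate: each iteration either shrinks $T$ into a sub-product region disjoint from $R_j$ or forces a product region between $R_i$ and $R_j$ that, combined with the tightness verified in Phase 1 via Lemma \ref{arcintersectionlemma}, yields a contradiction. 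Keeping this iterative process under control is where the technical care is needed.
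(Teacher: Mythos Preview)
Your two-phase outline is the same as the paper's, but the execution diverges in both phases, and Phase~1 contains a genuine gap.

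\medskip

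\textbf{Phase 1.} The paper does \emph{not} minimise $|R_n\cap(R_0\cup\cdots\cup R_{n-1})|$ all at once. Instead it runs a double induction: having placed $R_0,\ldots,R_m$, it introduces $R_{m+1}$ and makes it disjoint from $R_0,R_1,\ldots$ \emph{one at a time}. The payoff is that when one seeks a product region $T_0$ between $R_{m+1}$ and $R_{k+1}$ (via Theorem~\ref{productregionthm}, not Corollary~\ref{isotopyrelbdycor2} --- the latter needs coincident boundaries, which are not assumed here), every earlier $R_i$ is already disjoint from \emph{both} horizontal faces of $T_0$. Proposition~\ref{surfaceinproductprop} then forces $R_i\cap T_0$ to be parallel to the full horizontal boundary, which contradicts $\partial T_0$ meeting $R_{m+1}\cap R_{k+1}$. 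So $T_0$ is automatically clean; no iterated sub-product-region argument is needed. Your iterative scheme can probably be made to work, but it is strictly harder.

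The real gap is tightness. You write only that it is ``verified using Lemma~\ref{arcintersectionlemma}''. After disjointness is achieved, the pair $(R_{m+1},R_{k+1})$ is tight by Lemma~\ref{isotopeincomplementlemma} \emph{unless} they have parallel components, and in that case one must show the parallel component of $R_{m+1}$ sits on the correct side of that of $R_{k+1}$. When the product region $T_1$ between these parallel components also contains a component of some earlier $R_i$, one cannot simply swap sides. The paper resolves this with a careful argument: it chooses a non-parallel component $R_{m+1,1}$ of $R_{m+1}$, builds a path $\rho$ as in Lemma~\ref{arcintersectionlemma}, locates the last crossing of $\rho$ with $R_i$, extends through $T_1$ to get $\rho'$, locates the last crossing of $\rho'$ with $R_{k+1}$, and extends again to $\rho''$, finally reading off the correct side from the algebraic intersection numbers. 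This is the most delicate part of the proof and is entirely absent from your proposal.

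\medskip

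\textbf{Phase 2.} Your ``principal obstacle'' is not one. Once the $R_j$ are pairwise disjoint, any $R_j$ with $j\neq i$ meeting a product region $T_2$ between $S$ and $R_i$ enters $T_2$ \emph{only} through the $S$-side (it is disjoint from $R_i$). Hence isotoping $S$ across $T_2$ to the $R_i$-side removes the curves of $S\cap R_i$ on $\partial T_2$ \emph{and} all curves of $S\cap R_j$ bounding $R_j\cap T_2$, creating no new intersections. So pushing $S$ across $T_2$ strictly reduces $|S\cap\bigcup_j R_j|$; no innermost sub-region iteration is needed. After all such $T_2$ that meet $S\cap R_i$ are eliminated, only whole parallel components remain, and these are handled by moving the relevant components of the $R_i$ below $S$ (or equivalently $S$ above them) as a final step.
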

\begin{proof}
We proceed by induction on $n$. The surface $R_0$ can be made almost transverse to $S$ with simplified intersection. Moving any component of $R_0$ that coincides with one of $S$ downwards completes the base case.

Suppose that the surfaces $R_0,\ldots,R_m$ have been positioned as required relative to each other and to $S$. We now position $R_{m+1}$ by induction. By Lemma \ref{productregionsidelemma}, $R_{m+1}$ can be made disjoint from and tight relative to $R_0$. Suppose $R_{m+1}$ has been positioned so that it is disjoint from and tight relative to each of the surfaces $R_0,\ldots,R_k$ for some $k<m$. By a small isotopy make $R_{m+1}$ transverse to $R_{k+1}$. Suppose $R_{m+1}\cap R_{k+1}\neq\emptyset$. Temporarily delete all components of either surface that are disjoint from the other surface. Since these two surfaces can be made disjoint, by Theorem \ref{productregionthm} there is a product region $T_0$ between them. In addition, $\partial T_0$ meets $R_{m+1}\cap R_{k+1}$.
If $T_0$ is disjoint from the surfaces $R_0,\ldots,R_k$ it can be used to simplify $R_{m+1}\cap R_{k+1}$ without affecting the interaction of the other surfaces. Suppose $T_0$ meets $R_i$ for some $i\leq k$. As $R_i$ is disjoint from $R_{k+1}$ and from $R_{m+1}$, Proposition \ref{surfaceinproductprop} gives that $R_i\cap T_0$ is parallel to the horizontal boundary of $T_0$. This contradicts that $\partial T_0$ meets $R_{m+1}\cap R_{k+1}$. Thus we can make $R_{m+1}$ disjoint from $R_{k+1}$. 

By Lemma \ref{isotopeincomplementlemma}, 
$R_{m+1},R_{k+1}$ are tight unless components of $R_{m+1}$ and $R_{k+1}$ are parallel to each other. Consider a product region $T_1$ between two such components. If $T_1$ is disjoint from $R_0,\ldots,R_k$ then we can use it to switch the components of $R_{m+1}$ and $R_{k+1}$ if necessary as needed to make them tight. If $T_1$ meets a component of $R_i$ for some $i\leq k$ then this component is also parallel to those of $R_{m+1},R_{k+1}$. In this case we must show that the component of $R_{m+1}$ is already correctly positioned relative to that of $R_{k+1}$ to be tight.

Let $R_{j,0}$ be the component of $R_j$ in question for $j=i,k+1,m+1$. Without loss of generality, $T_1$ lies below $R_{k+1,0}$ and above $R_{m+1,0}$. Some component $R_{m+1,1}$ of $R_{m+1}$ is not parallel to any component of $R_{k+1}$. Let $x\in R_{m+1,1}\setminus R_{k+1}$ and $y\in R_{m+1,0}$. Choose a path $\rho$ from just above $x$ to just below $y$ that is disjoint from $R_{m+1}$ as in Lemma \ref{arcintersectionlemma}. Since $R_i,R_{m+1}$ are tight, the algebraic intersection of $\rho$ and $R_{i}$ is 1. Let $x_1$ be the final point of $\rho\cap R_{i}$ along $\rho$. 

Suppose that $\rho$ passes through $R_{i}$ in the negative direction at $x_1$. Then there is a section of $\rho$ that passes through $R_{i}$ exactly twice, both times in the positive direction. Since this section of $\rho$ is disjoint from $R_{m+1}$, this contradicts that $R_i$ and $R_{m+1}$ are tight. Thus $\rho$ passes through $R_{i}$ in the positive direction at $x_1$. 

Let $\rho'$ be the path that runs along $\rho$ from $x_1$ to $y$ and then continues through the product region $T_1$ to a point on $R_{i,0}$.
Then $\rho'$ has algebraic intersection 1 with $R_{k+1}$. Let $x_2$ be the last point of $\rho'\cap R_{k+1}$ along $\rho'$. Note that $\rho'$ does not meet $R_{k+1}$ in $T_1$, so $x_2$ occurs before $y$ on $\rho'$.

Let $\rho''$ be the path that runs along $\rho'$ from $x_2$ to where it meets $R_i$ in $T_1$ and then continues through $T_1$ to $R_{k+1}$. Then $\rho''$ has algebraic intersection 1 with $R_{m+1}$ as they only meet at $y$. This shows that $R_{m+1,0}$ is positioned correctly with respect to $R_{k+1}$. 

This completes the induction to show that $R_{m+1}$ can be positioned correctly relative to $R_0,\ldots,R_m$, which in turn completes the induction to show that $R_0,\ldots,R_n$ can be suitably positioned relative to each other. It remains to show that $S$ can be put into the required position. 

By a small isotopy, $S$ can be made transverse to the surfaces $R_0,\ldots,R_n$. Suppose $S$ does not have simplified intersection with these surfaces. Then there is a product region $T_2$ between $S$ and $R_i$ for some $i\leq n$. If $T_2$ meets $S\cap R_i$ then isotoping $S$ across $T_2$ reduces the number of intersection curves of $S$ with $\bigcup R_j$. We may therefore remove all such product regions. It remains to consider those components of $S$ that are parallel to components of $\bigcup R_j$. Suppose a component $S_0$ of $S$ is parallel to a component $R_{i,0}$ of $R_i$. Consider all components of $\bigcup R_j$ that are parallel to $S_0$. If all such components lie below $S_0$ then $S_0$ is correctly positioned. Else, let $T_3$ be the product region between the top side of $S_0$ and the bottom side of the parallel component furthest above $S_0$. Note that such a component is well defined as the components of $\bigcup R_j$ are disjoint. By Proposition \ref{surfaceinproductprop} and our previous positioning of $S$ we see that all surfaces that meet $T_3$ are parallel to $S_0$. Thus moving $S_0$ to above all parallel components does not change its simplified intersection elsewhere.
\end{proof}

\begin{lemma}\label{alltightbdylemma}
Suppose that taut Seifert surfaces $R_0,\ldots,R_n$ have been chosen with $\partial R_0=\partial R_1=\cdots=\partial R_n$ such that, for $i\neq j$, $R_i$ and $R_j$ can be isotoped to be tight keeping their boundaries fixed. Then they can be isotoped keeping their boundaries fixed to be pairwise tight and disjoint except along their boundaries.
\end{lemma}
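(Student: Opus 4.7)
The plan is to run the proof of Proposition \ref{alltightprop} essentially verbatim, replacing each application of Theorem \ref{productregionthm} by Corollary \ref{isotopyrelbdycor2}. The latter yields product regions $T$ with $\rho_T=\partial T$, so every isotopy extracted from such a $T$ fixes the common boundary $\partial R_0=\cdots=\partial R_n$.

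We induct on $n$; the case $n=0$ is vacuous. For the inductive step, assume $R_0,\ldots,R_m$ have already been isotoped, keeping their common boundary fixed, to be pairwise tight and $\partial$--almost disjoint. We position $R_{m+1}$ by a sub-induction on $k\leq m$, making $R_{m+1}$ tight and $\partial$--almost disjoint with each of $R_0,\ldots,R_k$ in turn. In the sub-inductive step, $R_{m+1}$ can, by hypothesis, be isotoped with boundary fixed to be tight with $R_{k+1}$, hence in particular $\partial$--almost disjoint from it. If any interior intersection remains, Corollary \ref{isotopyrelbdycor2} yields a product region $T$ between them with $\rho_T=\partial T$. If $T$ is disjoint from $R_0,\ldots,R_k$, we use it to reduce $|R_{m+1}\cap R_{k+1}|$ by a boundary-fixing ambient isotopy that leaves the earlier surfaces untouched. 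Otherwise $T$ meets some $R_i$ with $i\leq k$, in which case Proposition \ref{surfaceinproductprop}, applied with $R_i$ in place of $S'$ (using that $R_i$ is $\partial$--almost disjoint from both $R_{m+1}$ and $R_{k+1}$), forces each component of $R_i\cap T$ to be parallel to the horizontal boundary of $T$. This contradicts the fact that $\partial T$ meets $R_{m+1}\cap R_{k+1}$, exactly as in the proof of Proposition \ref{alltightprop}.

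The main technical obstacle arises at the final stage of the sub-induction, where components of $R_{m+1}$ and $R_{k+1}$ that are parallel to one another must be put into the correct tight configuration. The product region between two such components may be blocked by some $R_i$ with $i\leq k$, so we cannot simply swap them. We resolve this by the algebraic intersection argument at the end of the proof of Proposition \ref{alltightprop}: Lemma \ref{arcintersectionlemma}, combined with the previously established tightness of $R_{m+1}$ with $R_i$ and of $R_i$ with $R_{k+1}$, shows that the parallel components are already in the correct relative position, so no further isotopy is required. Since every isotopy used comes from a product region with $\rho_T=\partial T$, the common boundary remains fixed throughout.

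Finally, once the surfaces are pairwise tight and $\partial$--almost disjoint with fixed boundaries, a small boundary-fixing perturbation in a collar neighbourhood of $\partial M$ separates any coinciding components, yielding surfaces that are pairwise tight and disjoint except along their common boundary.
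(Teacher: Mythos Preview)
Your proposal is correct and follows exactly the approach the paper itself takes: the paper's entire proof is the single sentence ``This can be seen by replacing Theorem \ref{productregionthm} with Corollary \ref{isotopyrelbdycor2} in the proof of Proposition \ref{alltightprop},'' and you have spelled out precisely that substitution, correctly noting that the product regions produced by Corollary \ref{isotopyrelbdycor2} have $\rho_T=\partial T$ so that all resulting isotopies fix the common boundary.
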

\begin{proof}
This can be seen by replacing Theorem \ref{productregionthm} with Corollary \ref{isotopyrelbdycor2} in the proof of Proposition \ref{alltightprop}.
\end{proof}

\begin{lemma}\label{orderinglemma}
Let $R,R',S$ be as in Definition \ref{orderingdefn}, except that any component of $R$ or $R'$ that is parallel to one of $S$ may lie below it instead of coinciding with it, and parallel components of $R,R'$ need not coincide. Suppose additionally that $R,R'$ are tight. Then lifts of the complements of these surfaces can be used to decide whether $R'<_{S}R$ as in Definition \ref{orderingdefn}.
\end{lemma}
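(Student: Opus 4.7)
My plan is to reduce the weakened configuration to the standard configuration of Definition~\ref{orderingdefn} by a sequence of small isotopies, and to check that the lift-based decision is preserved throughout. After the reduction the claim follows immediately from the original definition.

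First I would perform three types of local isotopy. For each component $R_0$ of $R$ lying strictly below a parallel component $S_0$ of $S$, the region $T_0$ between them is a product. Proposition~\ref{surfaceinproductprop}, applied to the incompressible $R'$ and to any other components of $R$ and $S$ that meet $T_0$, forces each such component to be parallel to $R_0$, so after iterating through the nested parallel pieces $R_0$ may be pushed across $T_0$ to coincide with $S_0$. The symmetric move is made for components of $R'$ below $S$. Finally, for parallel components of $R$ and $R'$ that do not coincide, Lemma~\ref{productregionsidelemma} prescribes which should lie above the other, and the product region between them supports an isotopy making them coincide. After these moves the triple $(R,R',S)$ satisfies the hypotheses of Definition~\ref{orderingdefn}.

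Next I would track the lifts through each such small isotopy. Fix a lift $V_S$ of $M\setminus S$; since $S$ is never moved, $V_S$ is unchanged. Each push is an ambient isotopy of $M$ supported in a product region, which lifts to an equivariant ambient isotopy of $\widetilde{M}$. Under this lifted isotopy the distinguished lift $V_R$ of $M\setminus R$ satisfying $V_R\cap V_S\neq\emptyset$ and $V_R\cap\tau(V_S)=\emptyset$ is transported to a lift of the new $M\setminus R$ satisfying the same two conditions, and uniqueness persists because no lift of $R$ ever crosses a lift of $S$ during the isotopy. The same argument applies to $V_{R'}$. Hence the boolean value of $V_{R'}\cap V_S\neq\emptyset$ is invariant under the sequence of pushes; it equals the value given by Definition~\ref{orderingdefn} in the final configuration, which defines $R'<_S R$.

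I expect the main obstacle to be the verification that $V_R$ remains uniquely pinned down at the instant a component $R_0$ collapses onto a component $S_0$. At that moment $M\setminus R$ loses a slab and the two corresponding lifts in $\widetilde{M}$ merge; one must check that the ``correct'' component persists as $V_R$. I would handle this by noting that the slab between $R_0$ and $S_0$ lies in a single lift of $M\setminus R$ throughout the isotopy and in $V_S$ throughout, so the lift determined by any chosen basepoint in $V_S$ before the collision is still the lift satisfying the two intersection conditions against $V_S$ and $\tau(V_S)$ after the collision, and likewise for $V_{R'}$.
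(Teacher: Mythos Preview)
Your approach is valid in outline but differs from the paper's, which is considerably shorter and avoids isotopies altogether. The paper argues directly at the level of lifts: moving a component of $R$ down from $S$ does not change which lift $V_R$ satisfies $V_R\cap V_S\neq\emptyset$ and $V_R\cap\tau(V_S)=\emptyset$, since the only change to $M\setminus R$ happens inside $V_S$. Then the extra hypothesis that $R,R'$ are tight gives $V_R\subset V_{R'}\cup\tau(V_{R'})\cup\widetilde{R}'$, so $\widetilde{R}'$ lies in the closure of $V_R$ exactly as it would in the standard position of Definition~\ref{orderingdefn}; hence the same $V_{R'}$ is selected. Finally, moving components of $R'$ down from $S$ cannot change whether $V_{R'}$ meets $V_S$. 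Three sentences, no isotopy tracking.

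Your route---normalise the configuration by product-region pushes and track the distinguished lifts through the ambient isotopies---does work, but you should be more careful in one place. You note that $T_0$ between $R_0$ and $S_0$ may contain parallel components of $R'$, and propose to ``iterate through the nested parallel pieces''; but you then claim ``no lift of $R$ ever crosses a lift of $S$'' suffices for both $V_R$ and $V_{R'}$. For $V_{R'}$ the relevant condition is that no lift of $R'$ crosses a lift of $R$, and this forces you to order the pushes so that any parallel $R'$-component inside $T_0$ is moved before $R_0$. This is arrangeable (push the innermost piece first), but it is exactly the bookkeeping the paper's direct argument sidesteps by invoking tightness once to pin down $V_{R'}$ without ever moving anything.
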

\begin{proof}
Moving some (or even all) components of $R$ downwards does not change the choice of lift $V_{R}$ of $M\setminus R$. Since $R,R'$ are tight, $V_{R}\subset V_{R'}\cup \tau(V_{R'})\cup\tilde{R}'$, where $\tilde{R}'$ is the lift of $R'$ that lies between $V_{R'}$ and $\tau(V_{R'})$. In particular, $\tilde{R}'$ lies within the closure of $V_{R}$, as it would in Definition \ref{orderingdefn}. This means the same lift $V_{R'}$ of $M\setminus R'$ has been chosen. Finally, moving components of $R'$ downwards does not change whether $V_{R'}$ meets $V_{S}$. 
\end{proof}

\begin{proposition}\label{surfacesinorderprop}
Let $[R_0],[R_1],[R_2]$ be the vertices of a 2--simplex in $\ms(L)$, with $[R_0]<_{R_p}[R_1]<_{R_p}[R_2]$.
Suppose that $R_0,R_1,R_2$ are pairwise disjoint and tight.
Then on any component of $\partial M$ the boundaries of the surfaces occur in the order $\partial R_0,\partial R_1,\partial R_2,\partial R_0$ as measured in the positive direction around $L$.
\end{proposition}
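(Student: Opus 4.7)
The plan is to work in the infinite cyclic cover $\widetilde{M}$ and translate the hypothesis $[R_0]<_{R_p}[R_1]<_{R_p}[R_2]$ into an inequality on the heights of the boundary lifts along the cylinder $\widetilde{T}_K=\pi^{-1}(T_K)$ for each component $K$ of $L$. Since the meridian of $K$ has linking number $1$ with $L$, $\widetilde{T}_K$ is a single cylinder on which $\tau$ acts as translation by one unit in the positive meridional direction. Moreover, $\partial R_i=L$ together with $R_i$ having no closed components implies (via algebraic intersection with a meridian) that $R_i\cap T_K$ is a single longitude, so each lift $\widetilde{R}_i$ meets $\widetilde{T}_K$ in a single circle. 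Consequently the cyclic order of $\partial R_0,\partial R_1,\partial R_2$ on $T_K$ coincides with the vertical order of these circles along $\widetilde{T}_K$ within one $\tau$-period.

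By pairwise tightness of $R_0,R_1,R_2$, between any two consecutive lifts of one of them there is exactly one lift of each of the other two. Fix a lift $\widetilde{R}_0$ and parameterise heights so that $\widetilde{R}_0$ is at height $0$ and $\tau\widetilde{R}_0$ at height $1$; write $b_1,b_2\in(0,1)$ for the heights of the unique lifts of $R_1,R_2$ in this period. The proposition then reduces to showing $b_1<b_2$, since projecting by $\tau$ would then produce the cyclic order $\partial R_0,\partial R_1,\partial R_2$ in the positive meridional direction on $T_K$.

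To prove $b_1<b_2$, I would position $R_p$ almost transversely with simplified intersection against each $R_i$, pick a lift $V_{R_p}$, and let $b_p\in(0,1)$ denote an appropriate height of the associated $\widetilde{R}_p$. Tracing through Definition~\ref{orderingdefn}, a direct interval calculation on $\widetilde{M}$ shows that for an adjacent tight pair $(R_i,R_j)$ the condition $[R_i]<_{R_p}[R_j]$ is equivalent to the statement ``in the positive direction from the chosen lift of $R_i$ on $\widetilde{T}_K$ the nearest lift of $R_j$ is met before the nearest lift of $R_p$''; that is, to an elementary inequality among $b_i,b_j,b_p$. Combining $[R_0]<_{R_p}[R_1]$, $[R_1]<_{R_p}[R_2]$, and (by acyclicity of $<_{R_p}$ applied to the comparable adjacent pair $[R_0],[R_2]$) $[R_0]<_{R_p}[R_2]$ then forces $0<b_1<b_2<b_p$, as desired. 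The main obstacle is this interval calculation: one has to identify carefully which lift the defining condition ``$V_R\cap V_S\neq\emptyset$ and $V_R\cap\tau V_S=\emptyset$'' selects at each stage, and to verify that the argument remains valid when $R_p$ is only in simplified intersection with the $R_i$ rather than tight, so that several lifts of $R_i$ might sit inside a single $\tau$-period of $V_{R_p}$ and some additional bookkeeping is required.
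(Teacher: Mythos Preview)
Your overall strategy---lift to $\widetilde{M}$ and use the relation $<_{R_p}$ to pin down the order---matches the paper's. But the route you take diverges at a key point, and there is a genuine gap.

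You propose to reduce everything to heights on a single boundary cylinder $\widetilde{T}_K$ and then read off $<_{R_p}$ as an inequality among the numbers $b_0,b_1,b_2,b_p$. For the pairwise tight, disjoint surfaces $R_0,R_1,R_2$ this reduction is legitimate: each lift separates $\widetilde{M}$, so the global ``above/below'' order is faithfully recorded by the height on $\widetilde{T}_K$. The problem is $R_p$. You only put $R_p$ into simplified intersection with the $R_i$, so lifts of $R_p$ do \emph{not} separate $\widetilde{M}$ from lifts of $R_i$ in any simple way, and a region $V_{R_p}$ can meet a region $V_{R_i}$ in the interior of $\widetilde{M}$ without the corresponding boundary intervals on $\widetilde{T}_K$ overlapping. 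Thus your claimed equivalence ``$[R_i]<_{R_p}[R_j]$ iff a height inequality among $b_i,b_j,b_p$'' is not justified: only the implications coming from \emph{emptiness} conditions (e.g.\ $V_R\cap\tau V_S=\emptyset$) restrict cleanly to $\widetilde{T}_K$; the non-emptiness conditions do not. Your final assertion $0<b_1<b_2<b_p$ (in particular the appearance of $b_p$ as a single well-defined number dominating $b_2$) therefore has no clear meaning or proof.

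The paper avoids this by never reducing to $\widetilde{T}_K$ during the argument. It makes copies $R'_0,R'_1,R'_2$, positions them together with $R_p$ via Proposition~\ref{alltightprop}, invokes Lemma~\ref{orderinglemma} to compute $<_{R_p}$ with these representatives, and then \emph{chains} the lifts: choose $V_2$ relative to $V_p$, then $V_1$ relative to $V_2$, then $V_0$ relative to $V_1$, using the two hypotheses $[R_1]<_{R_p}[R_2]$ and $[R_0]<_{R_p}[R_1]$ to keep each $V_i$ meeting $V_p$. Because the $R'_i$ are pairwise disjoint and tight, the chain of conditions $V_1\cap\tau V_2=\emptyset$, $V_0\cap\tau V_1=\emptyset$ forces the chosen lifts $\widetilde{R}'_0,\widetilde{R}'_1,\widetilde{R}'_2$ into the correct linear order in $\widetilde{M}$, and only then is the conclusion about $\partial M$ read off. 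Finally, since the paper worked with copies $R'_i$, it needs Lemma~\ref{isotopeincomplementlemma} to transfer the boundary order back to the original $R_0,R_1,R_2$. Your idea of fixing $R_0,R_1,R_2$ and moving $R_p$ instead would spare you that last step, but you would still need to carry out the chaining argument in $\widetilde{M}$ rather than on $\widetilde{T}_K$.
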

\begin{proof}
Let $R'_0,R'_1,R'_2$ be copies of $R_0,R_1,R_2$ respectively, and
position the surfaces $R'_0,R'_1,R'_2,R_p$ as in Proposition \ref{alltightprop}.
By Lemma \ref{orderinglemma}, we may use $R'_0,R'_1,R'_2,R_p$ in Definition \ref{orderingdefn}.

Let $V_p$ be a lift of $M\setminus R_p$ to $\widetilde{M}$.
Let $V_2$ be the lift of $M\setminus R'_2$ such that $V_2\cap V_p\neq \emptyset$ but $V_2\cap\tau(V_p)=\emptyset$, and let $V_1$ be the lift of $M\setminus R'_1$ such that $V_1\cap V_2\neq\emptyset$ but $V_1\cap\tau(V_2)=\emptyset$.
Since $[R_1]<_{R_p}[R_2]$ we know that $V_1\cap V_p\neq\emptyset$. 
Let $V_0$ be the lift of $M\setminus R'_0$ such that $V_0\cap V_1\neq\emptyset$ but $\V_0\cap \tau(V_1)=\emptyset$.
Then $V_0\cap V_p\neq\emptyset$, as $[R_0]<_{R_p}[R_1]$.
From this it follows that on $\partial M$ the boundaries occur in the order $\partial R_0, \partial R_1, \partial R_2,\partial R_0$.

Now we compare $R'_0,R'_1,R'_2$ to $R_0,R_1,R_2$.
We may assume that $R'_0$ coincides with $R_0$.
By Lemma \ref{isotopeincomplementlemma}, $R'_1$ is isotopic to $R_1$ in the complement of $R_0$. Thus we may also assume that $R'_1$ coincides with $R_1$.
Finally, by Lemma \ref{isotopeincomplementlemma} again we find that $R'_2$ is isotopic to $R_2$ in the complement of $R_0\cup R_1$.
\end{proof}

\begin{remark}
Proposition \ref{alltightprop} and Proposition \ref{surfacesinorderprop} together prove Theorem \ref{uniquepositionthm}.
\end{remark}


\bibliography{splitlinksreferences}
\bibliographystyle{hplain}

\bigskip
\noindent
CIRGET, D\'{e}partement de math\'{e}matiques

\noindent
UQAM

\noindent
Case postale 8888, Centre-ville

\noindent
Montréal, H3C 3P8

\noindent
Quebec, Canada

\smallskip
\noindent
\textit{jessica.banks[at]lmh.oxon.org}

\end{document}